\documentclass[twoside, 12pt]{article}
\usepackage{mathrsfs}
\usepackage{amssymb, amsmath, mathrsfs, amsthm}
\usepackage{graphicx}
\usepackage{color}
\usepackage[top=2.5cm, bottom=2.5cm, left=2.5cm, right=2.5cm]{geometry}
\usepackage{float, caption, subcaption}

\input{epsf}

\newcommand{\bd}{\begin{description}}
\newcommand{\ed}{\end{description}}
\newcommand{\bi}{\begin{itemize}}
\newcommand{\ei}{\end{itemize}}
\newcommand{\be}{\begin{enumerate}}
\newcommand{\ee}{\end{enumerate}}
\newcommand{\beq}{\begin{equation}}
\newcommand{\eeq}{\end{equation}}
\newcommand{\beqs}{\begin{eqnarray*}}
\newcommand{\eeqs}{\end{eqnarray*}}

\newcommand{\Rmnum}[1]{\expandafter\@slowromancap\romannumeral #1@}

\definecolor{DarkGreen}{rgb}{0.2, 0.6, 0.3}

\newtheorem{theorem}{Theorem}[section]

\newtheorem{lemma}{Lemma}[section]
\theoremstyle{definition}

\newtheorem{claim}{Claim}
\newtheorem{remark}{Remark}[section]

\newtheorem{proposition}{Proposition}[section]

\begin{document}
\title{Maximum packings in graphs forbidding given rainbow cycles}
\author{\small Ping Li, Yang Yang\\
{\small School of Mathematics and Statistics, Shaanxi Normal University, Xi'an, Shaanxi 710062}\\
{\small  Emails: lp-math@snnu.edu.cn, yyang$\_\_$@snnu.edu.cn}\\
}
\date{}
\maketitle

\begin{abstract}
For graphs $F$ and $G$, $F$-multicolor Tur\'{a}n number of $G$, denoted by $\mathrm{ex}_F(n,G)$, is the maximum number of edge-disjoint copies of $F$ in an $n$-vertex graph such that there is no copy of $G$ whose edges come from distinct copies of $F$. We study this parameter mainly for cycle pairs and determine, up to asymptotic order, when $\mathrm{ex}_{C_k}(n,C_\ell)$ attains the three natural thresholds: the upper bound, the lower bound, and the $n^{2-o(1)}$ regime. In particular, for every odd $k\ge 5$ and every $t\ge 1$, where $C_k(t)$ denotes the $t$-blow-up of $C_k$, we prove $\mathrm{ex}_{C_k(t)}(n,C_{k-2})=n^2/(kt)^2+o(n^2),$
and establish a corresponding stability theorem. We further show that if $F$ and $G$ have the same odd girth $k$ and there exist  homomorphisms from both $F$ and $G$ to $C_k$, then $\mathrm{ex}_F(n,G)=n^{2-o(1)}$; in particular, $\mathrm{ex}_{C_k}(n,C_k)=n^{2-o(1)}$ for odd $k$. In addition, we prove $\mathrm{ex}_{C_{2k+1}}(n,C_{2\ell+1})=O\!\left(n^{1+1/\lceil \ell/k\rceil}\right)$ for $\ell>k$ and  $\mathrm{ex}_F(n,G)=O(\mathrm{ex}(n,G))$ for bipartite $G$.
We particularly establish $\mathrm{ex}_{C_4}(n,C_4)=\frac{\sqrt{2}}{8}n^{3/2}+O(n)$, and give a sufficient condition under which the lower bound cannot be attained.\\[2mm]
{\bf Keywords:} $F$-multicolor Tur\'{a}n number, cycles, the lower bound, the $(6,3)$-type bound, stability  \\[2mm]
{\bf AMS subject classification 2020:} 05C35.
\end{abstract}

\section{Introduction}

Tur\'{a}n-type problems are central objects in
extremal graph theory: one seeks the
maximum number of edges in an $n$-vertex
graph avoiding a prescribed subgraph. Various extensions and variants of the problem have significantly enriched the field. In this paper, we study a new variant of Tur\'{a}n-type problems whose definition is closely related to the well-known Ruzsa-Szemer\'{e}di $(6,3)$-problem \cite{6-3}.

The Ruzsa-Szemer\'{e}di $(6,3)$-problem (or $(6,3)$-problem for short) can be equivalently stated as follows: determine the maximum number of edge-disjoint triangles in an $n$-vertex graph such that no triangle is formed by taking one edge from each of three distinct triangles. Imolay, Karl, Nagy and V\'{a}li \cite{Imolay} generalized this problem by considering the following: given two fixed graphs $F$ and $G$, determine the maximum number of edge-disjoint copies of $F$ in an $n$-vertex graph such that there is no copy of $G$ whose edges come from distinct copies of $F$. This maximum, denoted by $ex_F(n,G)$, is called the \emph{$F$-multicolor Tur\'{a}n number} of $G$ (or {\em multicolor Tur\'{a}n number} of $G$ for short). When $F=K_2$, this notion reduces to the classical Tur\'{a}n problem for the forbidden subgraph $G$.

Determining $ex_F(n,G)$ is difficult even for small cycle pairs, as illustrated by the
$(6,3)$-problem \cite{6-3} and the still-nontrivial case $ex_{C_5}(n,C_3)$ \cite{Balogh}.

For convenience, we may regard a collection of edge-disjoint copies of $F$ as a family of monochromatic copies of $F$, each assigned a distinct color. Any such edge-disjoint copy of $F$ is called a {\em monochromatic $F$-copy}, while any other (not necessarily monochromatic) copy of $F$ is simply called a {\em copy of $F$}.
Then $ex_F(n,G)$ is the maximum number of monochromatic $F$-copies in an $n$-vertex graph that contains no rainbow copy of $G$. 

The $F$-multicolor Tur\'{a}n number is a very interesting concept, as it is closely related to three well-studied problems: the linear Tur\'{a}n number of Berge graphs, the packing number, and the generalized Tur\'{a}n number.

\subsection{Two related problems}

For two $r$-uniform hypergraphs $G$ and $F$, the \emph{$G$-packing number} of a graph $H$, denoted $\nu_H(G)$, is the maximum number of edge-disjoint copies of $G$ in $H$. An interesting problem is to determine the packing number in graphs that forbid some given graph. We use $\nu_n(F,G)$ to denote the maximum $F$-packing number among all $n$-vertex $G$-free graphs. It is worth noting that $ex_F(n,G)$ and $\nu_n(F,G)$ are deeply related.

\begin{proposition}\label{prop-1}
For any two $r$-uniform hypergraphs $G$ and $F$, $\nu_n(F,G)=ex_F(n,G)+o(n^r)$.
\end{proposition}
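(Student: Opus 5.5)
The plan is to prove the two inequalities separately. The bound $\nu_n(F,G)\le ex_F(n,G)$ holds exactly. For the reverse bound, $ex_F(n,G)\le \nu_n(F,G)+o(n^r)$, I will count copies of $G$ and then apply the hypergraph removal lemma.

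\emph{Lower bound on $ex_F$.} Let $H$ be an $n$-vertex $G$-free $r$-graph, and take a maximum family of edge-disjoint copies of $F$ in $H$. Give each copy its own colour and delete the edges of $H$ not covered by any copy. The coloured $r$-graph contains no copy of $G$ at all, so in particular it has no rainbow copy. Hence $\nu_n(F,G)\le ex_F(n,G)$.

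\emph{Upper bound on $ex_F$.} Take $m=ex_F(n,G)$ edge-disjoint monochromatic copies $F_1,\dots,F_m$ on $n$ vertices with no rainbow copy of $G$, and let $H$ be their union. Note that $m\le \binom{n}{r}/e(F)=O(n^r)$. If $e(G)\le 1$ everything is trivial, so assume $e(G)\ge 2$. Every copy of $G$ in $H$ is non-rainbow, so it has two distinct edges $e,f$ lying in the same $F_i$. I bound the number of such copies as follows:
\begin{itemize}
\item choose the pair of edges $e\neq f$ of $G$ that play this role, in $O(1)$ ways;
\item choose the index $i$, in $O(n^r)$ ways;
\item choose the images of $e$ and $f$ inside $F_i$, in $O(1)$ ways;
\item choose the images of the remaining vertices, in at most $n^{v(G)-|e\cup f|}$ ways.
\end{itemize}
Since $e\ne f$, we have $|e\cup f|\ge r+1$. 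The number of copies of $G$ in $H$ is therefore
\[
O\bigl(n^{r}\cdot n^{v(G)-r-1}\bigr)=O\bigl(n^{v(G)-1}\bigr)=o\bigl(n^{v(G)}\bigr).
\]

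\emph{Applying removal.} By the hypergraph removal lemma (R\"odl--Skokan, Gowers, Nagle--R\"odl--Schacht), we can delete a set $D$ of $o(n^r)$ edges from $H$ so that the remainder $H'$ is $G$-free. Because the $F_i$ are edge-disjoint, each deleted edge destroys at most one $F_i$. So $H'$ still contains at least $m-|D|=m-o(n^r)$ edge-disjoint copies of $F$. This gives $\nu_n(F,G)\ge ex_F(n,G)-o(n^r)$, which combined with the first part proves the proposition.

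The only non-elementary ingredient, and the main obstacle, is the removal lemma for $r$-uniform hypergraphs. Once it is invoked, the remaining step is the counting estimate. That estimate hinges on the observation that two distinct edges of $G$ inside a single $F$-copy pin down at least $r+1$ vertices while only $O(n^r)$ choices of host copy are available, which saves a factor of $n$.
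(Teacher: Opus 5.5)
Your proof is correct and follows the paper's own argument. It has the same three steps: the trivial inequality $\nu_n(F,G)\le ex_F(n,G)$; the count of $O(n^{v(G)-1})$ copies of $G$ (all non-rainbow, hence containing two same-coloured edges spanning at least $r+1$ vertices, which is exactly Lemma~\ref{lem-star}); and the hypergraph removal lemma, using that each deleted edge destroys at most one monochromatic $F$-copy.
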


The second related problem is the generalized Tur\'{a}n number $ex(n,F,G)$, which is the maximum number of copies of $F$ in an $n$-vertex $G$-free graph. The case $F=C_5$ and $G=C_3$ is the well-known pentagonal conjecture of Erd\H{o}s \cite{Erdos-53}, which was solved independently by Grzesik \cite{Grzesik-1} and Hatami, Hladk\'{y}, Kr\'{a}l, Norine, and Razborov \cite{Hatami} using flag algebras. Grzesik and Kielak \cite{Grzesik} generalized the result by giving an asymptotic value of $ex(n,C_{2r+1},C_{2r-1})$ for each $r\geq 2$. Beke and Janzer \cite{Beke} further characterized the extremal constructions. Bollob\'{a}s and Gy\H{o}ri \cite{Bollobas} considered the opposite problem $ex(n,C_3,C_5)$, and Gishboliner and Shapira \cite{Gishboliner} studied further cases of ``cycles versus cycles''. The systematic study of generalized Tur\'{a}n
numbers was initiated by Alon and Shikhelman \cite{Alon-S}. For a comprehensive survey on generalized Tur\'{a}n problems, we refer the reader to \cite{Gebener}.

\subsection{A generalization of the linear Tur\'{a}n number of Berge graphs}

Apart from the packing number and the generalized Tur\'an number, the third related problem is the linear Tur\'an number of Berge graphs. Our first main result extends some linear Tur\'an numbers of Berge cycles in $3$-uniform graphs.

For a graph $G$, the \emph{linear Tur\'an number} of Berge-$G$, denoted $ex_r^{lin}(n,G)$, is the maximum number of edges in an $n$-vertex linear $r$-uniform hypergraph containing no  Berge-$G$ (i.e., no copy of $G$ whose edges come from distinct hyperedges). If $F=K_r$ is a complete graph, then any two monochromatic $F$-copies intersect in at most one vertex. Regarding each such monochromatic $F$-copy as an $r$-uniform edge, we obtain $ex_F(n,G)=ex_r^{\operatorname{lin}}(n,G)$.

For linear Tur\'an numbers of Berge cycles hosted in $3$-uniform graphs, 
Ergemlidze, Gy\H{o}ri and Methuku \cite{Ergemlidze} proved that 
$ex_{C_3}(n,C_4)= n^{3/2}/6+O(n)$ and $ex_{C_3}(n,C_5)= n^{3/2}/(3\sqrt{3})+O(n)$;
F\"{u}redi and \"{O}zkahya \cite{Furedi} established the general bound 
$ex_{C_3}(n,C_{2k+1})\le 2kn^{1+1/k}+9kn$;
Collier-Cartaino, Graber, and Jiang \cite{Collier-Cartaino} extended the result to 
$ex_{C_3}(n,C_k)\le O(n^{1+1/\lfloor k/2\rfloor})$.
In the following we extend the above results. In particular, the second result generalizes those of \cite{Collier-Cartaino} and \cite{Furedi} with respect to the exponents.

\begin{theorem}\label{thm-4}
    Let $k,\ell\geq 1$be two integers.
    \begin{itemize}
        \item [(1).] If $F$ is a nonempty graph and $G$ is a bipartite graph, then $ex_F(n,G)=O(ex(n,G))$. Specifically, $ex_{F}(n,C_{2\ell})\leq O(n^{1+1/\ell})$.
        \item [(2).] If $\ell>k$, then $ex_{C_{2k+1}}(n,C_{2\ell+1})\leq 8(k+1)^2\ell n^{1+1/\lceil\ell/k\rceil}$.
    \end{itemize}
\end{theorem}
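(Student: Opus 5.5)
For part (1), I would pick one edge from each monochromatic $F$-copy. Since $F$ is nonempty, every colour class contains at least one edge, so this gives a graph $H$ with exactly one edge per colour and $|E(H)|=ex_F(n,G)$. Any copy of $G$ in $H$ uses edges of pairwise distinct colours, so it would be a rainbow copy of $G$ in the host graph, which is forbidden. Hence $H$ is $G$-free, and $ex_F(n,G)\le ex(n,G)$. This already gives the stronger statement without the $O$. The consequence for $C_{2\ell}$ then follows from the Bondy--Simonovits bound $ex(n,C_{2\ell})=O(n^{1+1/\ell})$.

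For part (2), the same selection idea cannot work, because $C_{2\ell+1}$ is not bipartite. Instead I would pick a path inside each colour. Write $m=\lceil \ell/k\rceil$. The plan is to build an auxiliary graph $H$ on the same vertex set that avoids all even cycles up to length $2m$, and then apply the bound $ex(n,\{C_4,\dots,C_{2m}\})=O(n^{1+1/m})$.

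\textbf{Construction of $H$.} In each monochromatic $C_{2k+1}$, any two vertices are joined by two monochromatic paths whose lengths $a$ and $2k+1-a$ have different parities. So an arc of length $j\le k$ between two vertices can be replaced by the complementary arc, of length $2k+1-j$ and opposite parity. For each copy I would add to $H$ a small number of chords, for instance pairs of vertices at cyclic distance $k$, or consecutive vertices. The aim is that a short even cycle in $H$ can be converted, by replacing chords with arcs, into a rainbow closed walk of odd length exactly $2\ell+1$. A cleaner version: put into $H$ only one edge per colour, namely an edge $uv$ joining vertices at distance $k$ along the copy, so that the two arcs have lengths $k$ and $k+1$. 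Then a cycle of length $s$ in $H$ using distinct colours expands to a rainbow closed walk whose length can be any value in $\{sk,\dots,s(k+1)\}$. Every integer in this range is realised, and there are $s+1$ choices in total.

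\textbf{Key steps.} Take $s\le 2m$ with $sk\le 2\ell+1\le s(k+1)$. This requires roughly $s\approx (2\ell+1)/k$, which is of order $2m$, consistent with forbidding cycles of length up to $2m$. Choosing the arcs appropriately then yields a rainbow closed walk of length exactly $2\ell+1$. The difficulty is that this is only a walk: arcs from different colours may share interior vertices. To handle this, before the expansion I would pass to a subfamily in which the arcs are "generic". Concretely, I would delete a constant fraction of the colours, or use a random bipartition and a cleaning step, in the spirit of F\"uredi--\"Ozkahya and Collier-Cartaino--Graber--Jiang, so that collisions force a shorter cycle. The constant $8(k+1)^2\ell$ suggests a counting argument of this kind: losses of order $(k+1)^2$ from the number of chord choices and overlaps, and a factor $\ell$ from the Bondy--Simonovits-type bound.

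\textbf{Main obstacle.} The expected hard step is turning a short cycle in $H$ into a genuine rainbow $C_{2\ell+1}$ rather than a closed walk, together with handling cycles of length less than $2m$. My first approach would be a minimal counterexample argument. Take a shortest cycle in $H$ whose expansion would work. If two arcs intersect, the intersection splits the configuration into a shorter rainbow odd cycle of some other length, or a shorter cycle in $H$, which contradicts minimality. If that fails, I would instead bound directly the number of short cycles through each vertex via a BFS-layer expansion argument, as in the proof of $ex_{C_3}(n,C_{2k+1})\le 2kn^{1+1/k}+9kn$. In that argument, the breadth-first layers of $H$ grow by a factor of about $n^{1/m}$ unless a rainbow $C_{2\ell+1}$ appears.
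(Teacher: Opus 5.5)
Your part (1) is correct, and it is cleaner than the paper's argument. The paper first applies the random partition of Lemma~\ref{lem-random} and then takes the edges of the surviving copies between two parts, losing a factor $v(F)^{v(F)}$. Your one-edge-per-colour selection gives $ex_F(n,G)\le ex(n,G)$ directly, and for every $G$.

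Part (2), however, has a genuine gap: the reduction to an auxiliary graph $H$ with one chord per colour fails, for two independent reasons.

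First, the length bookkeeping is off. With distance-$k$ chords, an even cycle $C_{2j}$ of $H$ can expand to a closed walk of length $2\ell+1$ only if $2jk\le 2\ell+1\le 2j(k+1)$, that is, only if $j\le\lfloor\ell/k\rfloor$. So the Bondy--Simonovits bound yields at best exponent $1+1/\lfloor\ell/k\rfloor$, which is weaker than $1+1/\lceil\ell/k\rceil$ whenever $k\nmid\ell$. Your estimate ``$s\approx(2\ell+1)/k$, of order $2m$'' hides exactly this off-by-one. Odd $s$ do not help, since excluding odd cycles from $H$ gives nothing subquadratic. For $k=2$, $\ell=3$ no even cycle works at all: a $C_4$ of $H$ expands only to lengths $8,\dots,12$. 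Other chord distances change the arithmetic case by case, but you do not carry this out.

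Second, and more seriously, even when the lengths match, turning the rainbow closed walk into a rainbow cycle is precisely the hard part, and it is left open. Your minimal-counterexample step has nothing to contradict. Only rainbow $C_{2\ell+1}$ is forbidden, so rainbow odd cycles of other lengths in the host graph, and cycles of $H$ of other lengths, are all allowed. The cleaning step and the BFS fallback are only named, so as written there is no proof of (2).

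The missing idea is to run the expansion in the host graph itself, along a tree that is both rainbow and extremal.
\begin{itemize}
\item The paper deletes vertices of degree $<2\delta$, where $\delta n=ex_{C_{2k+1}}(n,C_{2\ell+1})$.
\item It then takes a rainbow tree $T$ rooted at $x$, of height at most $\ell$, whose layer-size vector $(|U_1|,|U_2|,\dots)$ is lexicographically maximal.
\item Maximality forces every monochromatic $C_{2k+1}$ that shares no edge with $T$ and meets $U_0\cup\dots\cup U_{\ell-k}$ to lie in $V(T)$, with each edge inside a layer or between consecutive layers. Being odd, such a copy has an edge inside some layer.
\item Collect one such in-layer edge per copy into edge sets $R_i$ inside $U_i$; their colours avoid those of $T$. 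By Lemma~\ref{Ozkahya} applied to the rainbow graph $T\cup R_i$, each $R_i$ is $\theta_{\ge 2\ell}$-free, so $|R_i|\le 2(\ell-1)|U_i|$.
\item Exactly $|U_i|$ copies use a tree edge entering $U_i$.
\item A copy meeting $U_i$ is charged only to layers within distance $k$ of $i$. Summing degrees over $W_t=U_0\cup\dots\cup U_t$ therefore gives $\delta|W_t|\le 8(k+1)^2\ell\,|W_{t+k}|$, a gain of a factor of order $\delta$ every $k$ layers.
\item Iterating up to height $\ell$ gives $n\ge\lambda^{\lfloor\ell/k\rfloor}\delta^{\lceil\ell/k\rceil}$ with $\lambda=(8(k+1)^2\ell)^{-1}$. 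This is where $\lceil\ell/k\rceil$ comes from.
\item If the maximal tree has height $<\ell$, one gets directly $\delta\le(2k+1)(2\ell-1)$.
\end{itemize}
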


We also strengthen the first result in Theorem \ref{thm-4} when $G=C_4$ and $F$ is a cycle.

\begin{theorem} \label{Thm-c4}
\begin{itemize}
    \item [(1).] $\frac{\sqrt{2}n^{3/2}}{8}+o(n^{3/2})=ex(n/2,C_4)\leq {ex}_{C_4}(n,C_4)\leq \frac{\sqrt{2}n^{3/2}+n}{8}$.
    \item [(2).] For $k\geq 5$, we have
    $$
    ex_{C_k}(n,C_4)\leq \left\{
    \begin{array}{ll}
    \frac{\sqrt{3}n^{3/2}+n}{10}, &   k=5,\\
    \frac{\sqrt{6}n^{3/2}+n}{2k}, & k\geq 6.\\
    \end{array}
    \right.
    $$
\end{itemize}
\end{theorem}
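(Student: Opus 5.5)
The plan has two parts: a lower-bound construction for part (1), and a double-counting upper bound that covers both parts. The double counting is a Kővári–Sós–Turán style count of ``cherries'' (paths of length two). A monochromatic $C_k$ copy contains exactly $k$ cherries, one at each of its vertices, and these are monochromatic. A rainbow $C_4$ comes from two cherries with the same pair of endpoints but different middle vertices, where all four edges have distinct colours. So the aim is to show that, for each unordered pair $\{u,w\}$, only very few cherries can join $u$ and $w$, and then compare this with the convex lower bound on the total number of cherries.

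\emph{Lower bound in (1).} Take a $C_4$-free graph $H$ on $n/2$ vertices with $ex(n/2,C_4)$ edges. Replace each vertex $v$ by two copies $v_1,v_2$, and each edge $uv$ by the $4$-cycle $u_1v_1u_2v_2$. These $4$-cycles are edge-disjoint. Suppose there is a rainbow $C_4$. Projecting it back to $H$ gives a closed walk of length $4$ in which the four edges of $H$ are distinct, because the colours (which correspond to edges of $H$) are distinct. Such a walk is a $C_4$ in $H$, which is impossible. Finally, $ex(n/2,C_4)=\frac12(n/2)^{3/2}(1+o(1))=\frac{\sqrt2}{8}n^{3/2}+o(n^{3/2})$.

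\emph{Upper bounds.} Let $m$ be the number of monochromatic $C_k$'s, so there are $e=km$ edges. Fix a pair $u,w$ and consider the set $P_{uw}$ of all cherries $u\,x\,w$, with any colours allowed. Two cherries $uxw$ and $uyw$ with $x\ne y$ form a $C_4$, and this $C_4$ is non-rainbow only if some colour repeats among $ux,xw,uy,yw$.

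I would bound $|P_{uw}|$ by a small constant $c_k$: $c_4=2$, $c_5=3$ and $c_k=6$ for $k\ge6$. The argument goes as follows. Suppose there are many cherries through $u,w$. A monochromatic cherry uses one colour on both of its edges. A cherry $uxw$ whose two edges have distinct colours $a,b$ must share a colour with every other cherry. So colour $a$ appears at some edge $uy$ or $yw$, which means a colour class containing $ux$ also touches $y$. Each colour class is a single cycle, so at $u$ it uses exactly $2$ edges. Hence each colour accounts for at most $2$ edges at $u$ and at most $2$ at $w$. This caps the number of cherries pairwise ``blocked'' by shared colours.

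For $k=4$, a colour class through both $u$ and $w$ with $uw$ a diagonal gives exactly $2$ cherries, and cherries coming from different classes clash. This yields $c_4=2$. For $k=5$, the pair $u,w$ lies at distance $2$ on a $5$-cycle, giving one monochromatic cherry plus edges of other cherries; I expect at most $3$. For $k\ge6$ the count is analogous, giving $6$.

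Then
\[
\sum_v\binom{d(v)}2\le c_k\binom n2,
\]
and convexity gives $n\binom{2e/n}{2}\le c_k n^2/2$. Solving for $e$ gives $e\le \frac{\sqrt{c_k}}{2}n^{3/2}+\frac n4$. Dividing by $k$ gives $m\le(\sqrt2 n^{3/2}+n)/8$ for $k=4$, $(\sqrt3n^{3/2}+n)/10$ for $k=5$, and $(\sqrt6 n^{3/2}+n)/(2k)$ for $k\ge6$. These match the stated constants.

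\emph{Main obstacle.} The hard step is the case analysis establishing $|P_{uw}|\le c_k$. It has to track how at most two edges of each colour meet $u$ and $w$, and show that any larger family of cherries contains two cherries with four distinct colours. I would handle it by forming an auxiliary ``conflict graph'' on $P_{uw}$, with an edge between two cherries when they share a colour. This conflict graph must be complete. Every conflict is then charged to a colour at $u$ or at $w$, and each colour contributes only a bounded number of edge-incidences there.
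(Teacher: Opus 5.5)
Your lower-bound construction is the same as the paper's: the $2$-blow-up of an extremal $C_4$-free graph on $n/2$ vertices, with one colour per blown-up edge. Your projection argument that it has no rainbow $C_4$ is correct.

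The upper bound, however, rests on a false claim. The number $|P_{uw}|$ of \emph{all} cherries between $u$ and $w$ is not bounded by any constant. A $4$-cycle that contains a monochromatic cherry automatically repeats a colour, so monochromatic cherries are never constrained by the rainbow-$C_4$-free condition. Your assertion that ``cherries coming from different classes clash'' is therefore wrong, and your conflict graph is complete only on the rainbow cherries.

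Concretely, take $t$ monochromatic $C_k$'s that pairwise meet only in $u$ and $w$, with $u,w$ at distance $2$ in each; for $k=4$ this is $K_{2,2t}$ split into $t$ monochromatic $C_4$'s. Every $4$-cycle in this graph has the form $uawbu$, where $uaw$ and $ubw$ are monochromatic cherries, so it carries at most two colours. The graph is thus rainbow-$C_4$-free, yet $|P_{uw}|\ge t$. Hence $\sum_v\binom{d(v)}2\le c_k\binom n2$ is unjustified, and so is the sharper $+n/4$ term you derive from it.

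Even restricted to rainbow cherries, $c_4=2$ fails pair by pair. Take the three monochromatic $4$-cycles $ux_1yx_2u$, $ux_3x_1vu$ and $vx_2y'x_3v$. They use only three colours, so there is no rainbow $C_4$, yet $x_1,x_2,x_3$ are all rainbow common neighbours of $u$ and $v$.

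The paper avoids both problems in two steps.

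\emph{Count only rainbow cherries.} Each of the $d(v)/2$ monochromatic cycles through $v$ contributes exactly one monochromatic cherry at $v$. So $v$ is the centre of $\binom{d(v)}2-\frac{d(v)}2=\frac{d(v)^2}{2}-d(v)$ rainbow cherries. By convexity the total is at least $\frac{2e^2}{n}-2e$, and this is where the $+n$ in the stated bounds comes from.

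\emph{Bound rainbow common neighbours per pair.} A pair has at most $6$ rainbow common neighbours when $k\ge 6$, and at most $3$ when $k=4,5$. This uses the fact that each colour meets $u$ and $w$ in at most two edges each, together with a genuine case analysis of colour patterns; your ``I expect at most $3$'' and ``analogous, giving $6$'' do not supply this.

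For $k=4$ a per-pair bound of $2$ is false, so the paper uses a charging argument instead. A pair with three rainbow common neighbours is forced into one specific colour pattern. Two of the three pairs among those neighbours then have at most one rainbow common neighbour. Each such pair is charged by at most one large pair, so the average over all pairs is at most $2$.

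With $c=2,3,6$ the inequality $\frac{2e^2}{n}-2e\le c\binom n2$ then gives exactly the stated bounds.
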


\begin{remark}
According to Theorem 1.2 in \cite{Ma-Yang}, the gap between the lower and upper bounds of $ex_{C_4}(n,C_4)$ can be as large as $\Theta(n)$ for a positive density of integers $n$.
\end{remark}

\subsection{Three key points of multicolor Tur\'{a}n number}

In what follows we restrict our discussion to simple graphs.  
Imolay, Karl, Nagy and V\'{a}li \cite{Imolay} proved that $ex_F(n,G)=\Theta(n^2)$ if and only if there is no homomorphism from $G$ to $F$; for the nondegenerate case ($ex_F(n,G)=\Theta(n^2)$),  
\begin{align}\label{upper-lower}
\frac{n^2}{v(F)^2}+o(n^2)\leq ex_F(n,G)\leq \frac{ex(n,G)}{e(F)}+o(n^2).
\end{align} 
Li \cite{Li} extended these results to uniform hypergraphs.

It is worth mentioning that, as is well known, the Tur\'an exponent of any degenerate case is always less than a constant that strictly less than $2$.  
However, for the degenerate case of the multicolor Tur\'an number $ex_F(n,G)$, there exist graph pairs $(F,G)$ with $ex_F(n,G)=n^{2-o(1)}$ (for example $F=G=C_3$), whose exponent tends to $2$ as $n\to\infty$.  
This naturally brings three natural thresholds concerning $ex_F(n,G)$: the lower bound $n^2/v(F)^2+o(n^2)$ form Ineq.~(\ref{upper-lower}), the upper bound $ex(n,G)/e(F)+o(n^2)$ from Ineq.~(\ref{upper-lower}), and the $(6,3)$-type bound $n^{2-o(1)}$ (see Figure~\ref{fig-3-points}).  
An interesting question is to determine those graph pairs $(F,G)$ for which $ex_F(n,G)$ attains each of these points.

\begin{figure}[ht]
    \centering   \includegraphics[width=260pt]{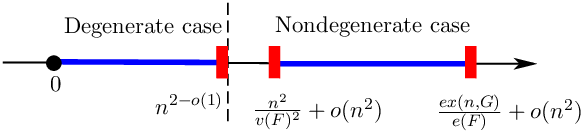}\\
    \caption{Three natural thresholds of multicolor Tur\'{a}n number.} \label{fig-3-points}
\end{figure}

Note that $\chi(G)>\chi(F)$ implies that there is no homomorphism from $G$ to $F$.
Imolay, Karl, Nagy and V\'{a}li \cite{Imolay} showed that $ex_F(n,G)$ attains its upper bound in Ineq.~(\ref{upper-lower}) when $\chi(G)>\chi(F)$, which encourages us to consider the asymptotic value of $ex_F(n,G)$ when $\chi(G)\le\chi(F)$. 
In fact, using the stability of extremal graphs for $ex(n,G)$, one can show that the condition $\chi(G)>\chi(F)$ is also necessary \cite{Li}: $ex_F(n,G)$ attains its upper bound in Ineq.~(\ref{upper-lower}) if and only if $\chi(G)>\chi(F)$.

Imolay, Karl, Nagy and V\'{a}li \cite{Imolay} posed the question of characterizing all graph pairs $(F,G)$ for which $ex_F(n,G)=n^2/v(F)^2+o(n^2)$. The first such pair was found by Kov\'{a}cs and Nagy \cite{Kovacs}: $F=C_5$ and $G=C_3$. Later, Balogh, Liebenau, Mattos and Morrison \cite{Balogh} refined this result by showing  $(n^2-10n)/25\leq ex_{C_5}(n,C_3)\leq (n^2+3n)/25+o(n)$.
 Characterizing when $ex_F(n,G)$ attains its lower bound in Ineq.~(\ref{upper-lower}) is a challenging problem. 
 Instead, we first study conditions that prevent it from attaining the lower bound in Ineq.~(\ref{upper-lower}).

\begin{theorem}\label{thm-1}
For graphs $G$ and $F$ with 
$\chi(G)\leq \chi(F)$, if the following conditions hold, then $ex_F(n,G)$ does not attain its lower bound in Ineq.~(\ref{upper-lower}).
\begin{enumerate}
    \item [(A).] There are two nonadjacent vertices $u,v$ in $F$ such that $u,v$ have at most one common neighbor;
    \item [(B).] there exists no homomorphism from $G$ to $F$, nor from $G$ to $F/\{u,v\}$.
\end{enumerate}
\end{theorem}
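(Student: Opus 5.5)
The plan is to give an explicit construction with at least $(1+\varepsilon)n^2/v(F)^2$ monochromatic $F$-copies and no rainbow $G$, for some $\varepsilon=\varepsilon(F)>0$. The host graph will be a subgraph of a blow-up of $F/\{u,v\}$, so that every copy of $G$ in it, rainbow or not, gives a homomorphism $G\to F/\{u,v\}$. Condition (B) rules this out.

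\textbf{Step 1: the baseline design.} Write $f=v(F)$, let $m$ be a large prime, and take parts $V_x\cong\mathbb{Z}_m$ for $x\in V(F)$. For each $(a,b)\in\mathbb{Z}_m^2$ we form one $F$-copy by sending $x$ to $a+c_xb\in V_x$, with pairwise distinct coefficients $c_x$. Across an edge $xy$ of $F$, the pair $(a+c_xb,\,a+c_yb)$ determines $(a,b)$, so the $m^2$ copies are edge-disjoint. This is the extremal construction for the lower bound in Ineq.~(\ref{upper-lower}). Its host graph is a subgraph of the blow-up of $F$, so it has no copy of $G$ at all, by the first half of (B).

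\textbf{Step 2: merging part of $V_v$ into $V_u$.} Fix a small $\delta>0$. Every copy with $b$ in a set $D\subseteq\mathbb{Z}_m\setminus\{0\}$ of size $\delta m$ is modified: its $v$-vertex is placed in $V_u$ instead of $V_v$, at position $a+c_vb$. The rows of $V_v$ no longer used (about $\delta m$ vertices) are deleted. Since $u$ and $v$ are nonadjacent, the host graph is now a subgraph of the blow-up of $F/\{u,v\}$ in which $V_u$ plays the role of the merged vertex.

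We must keep the new copies genuine, i.e. with distinct $u$- and $v$-vertices. This holds because $a+c_ub\neq a+c_vb$ for $b\neq 0$. Edge-disjointness can fail only on edges between $V_u$ and $V_w$, where $w$ is a common neighbour of $u$ and $v$. By (A) there is at most one such $w$, and edges to the other neighbours of $v$ go to parts that $u$ does not touch. A collision means that some edge $(p,q)\in V_w\times V_u$ is used once in the $u$-role and once in the $v$-role. Given the $v$-role copy, the two linear conditions determine the colliding $u$-role copy uniquely. So each modified copy conflicts with at most one other copy.

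\textbf{Step 3: counting, which is the main obstacle.} Naively, the $\delta m^2$ modified copies could each kill one copy, wiping out the gain. I would choose $D$ and the coefficients $c_u,c_v,c_w$ so that conflicts only occur with copies whose $b$ lies in a set like $\lambda D$ (for a suitable $\lambda$), and then discard one copy of each conflicting pair. For instance, setting $c_u-c_w=1$ and $c_v-c_w=2$, the $u$-role uses edges with difference $q-p=b$, while the $v$-role uses difference $2b$. So conflicts only arise from copies in the $u$-role with $b\in 2D$, costing at most $|D|\,m=\delta m^2$ copies. That is the same order as the gain, so this choice alone is insufficient.

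The refinement I would try first is to let the merged copies use a \emph{different} common-neighbour part. Concretely, also move their $w$-vertex into a fresh small set $W'$ of $\delta m$ new vertices, placed in the blow-up as a copy of $w$. This removes all conflicts, still keeps the host homomorphic to $F/\{u,v\}$, and costs only $\delta m$ added vertices against the $\delta m$ deleted from $V_v$.

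The gain has to come from rebalancing. With the vertex count held fixed, the copies can be spread over parts of unequal sizes, in proportion to how often each part is used. I would then optimise $\delta$ and these sizes, aiming for $m^2(1+c\delta)$ copies on $n=fm+O(\delta^2 m)$ vertices. Verifying that this rebalancing really yields a first-order gain in $\delta$, while collisions and deleted vertices cost only second order, is the step I expect to be hardest. If it works, it gives $\mathrm{ex}_F(n,G)\ge(1+\varepsilon)n^2/f^2$ and proves the theorem.
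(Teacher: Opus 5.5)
Your frame matches the paper's: place everything inside a blow-up of $F/\{u,v\}$ so that (B) excludes every copy of $G$, and use (A) to confine double use of edges to a single common-neighbour part. However, the proposal never produces a gain over $n^2/f^2$, which is the whole content of the theorem, and the concrete steps you do give fail.

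In Step 2, selecting the modified copies by $b\in D$ frees no vertex of $V_v$. For every $p\in V_v$ and every $b$, the copy $(p-c_v b,\,b)$ passes through $p$. So each vertex of $V_v$ still lies in $(1-\delta)m$ unmodified copies, and nothing can be deleted. As described, you still have $fm$ vertices and at most $m^2$ copies.

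The $W'$ refinement cannot remove all conflicts either. Each moved copy has its $u$- and $v$-vertices in $V_u$, both adjacent to its $w$-vertex in $W'$. So the $\delta m^2$ moved copies need $2\delta m^2$ distinct edges of $W'\times V_u$, while only $|W'|\,|V_u|=\delta m^2$ exist.

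The underlying obstruction is that your baseline is an exact decomposition of $F(m)$. Every edge of $V_u\times V_w$ is already used, so any local rerouting of $v$-edges through the common-neighbour part collides. The ``rebalancing'' meant to overcome this is left unspecified, and you yourself end with ``if it works''.

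The missing idea is to abandon the perturbation of the algebraic design and go straight to a global construction, as the paper does:
\begin{enumerate}
\item Let $H$ be the \emph{complete} blow-up of $F'=F/\{u,v\}$. It is $G$-free by (B), so no collision bookkeeping is needed beyond the packing being edge-disjoint.
\item Give the merged part $V_0$ and the common-neighbour part $V_1$ size about $\sqrt2\,s$, and every other part size $s=\lfloor (n-2)/(2\sqrt2+f-3)\rfloor$.
\item Put weight $s^2/|\mathcal{A}|$ on each member of the family $\mathcal{A}$ of canonically placed copies of $F$: $u,v\in V_0$, and every other vertex in its own part.
\item By symmetry, all edges between two given parts carry the same load. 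This load is $s^2/(|V_i||V_j|)\le 1$ on pairs used once per copy, and $2s^2/(|V_0||V_1|)\le 1$ on the pair $V_0V_1$, which each copy uses twice.
\end{enumerate}
This is exactly why both $V_0$ and $V_1$ are enlarged by $\sqrt2$, and why (A) is needed: with two common neighbours, the cheapest compensation exactly cancels the saving of one part. The resulting fractional packing has value $s^2>(1+\epsilon)^2n^2/f^2$ because $2\sqrt2-3<0$. The Haxell--R\"odl theorem (Theorem~\ref{thm-Haxell}) then converts it into an integral packing at a cost of $o(n^2)$. The gain is therefore macroscopic, coming from a global choice of part sizes rather than from a first-order perturbation of the balanced $F$-design.
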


\begin{remark}
    Condition (A) is necessary: for $t\ge 2$ and the pair of graphs $F=C_5(t)$ and $G=C_3$, condition (B) holds for any two nonadjacent vertices $u,v$ coming from the same part of the blow-up, while condition (A) fails; nevertheless, $ex_{C_5(t)}(n,C_3)$ attains its lower bound in Ineq.~(\ref{upper-lower}) (see Theorem~\ref{thm-3+}). Here $G(t)$ denotes the $t$-blow-up of $G$.
\end{remark}

The following result provides a method to generate more such pairs from one that attains the lower bound.

\begin{theorem}\label{thm-2}
    Let $G,F$ be two graphs with $ex_F(n,G)=\frac{n^2}{v(F)^2}+o(n^2)$, and $s$ be a positive integer.
    \begin{enumerate}
        \item   [(1).]  If $F(s)$ can be decomposed into $s^2$ edge-disjoint copies of $F$, then $ex_{F(s)}(n,G)=\frac{n^2}{v(F(s))^2}+o(n^2)$.
        \item  [(2).]  Specifically, if $s$ is a prime with $v(F)\leq s$, then $ex_{F(s)}(n,G)=\frac{n^2}{v(F(s))^2}+o(n^2)$.
    \end{enumerate}  
\end{theorem}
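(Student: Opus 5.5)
The argument has two halves: the lower bound comes for free from Ineq.~(\ref{upper-lower}), and the upper bound comes from refining the colouring. Since the desired value $n^2/v(F(s))^2=n^2/(s^2v(F)^2)$ is exactly the general lower bound for the pair $(F(s),G)$, only the upper bound $ex_{F(s)}(n,G)\le n^2/(s^2v(F)^2)+o(n^2)$ needs proof. Note that $G$ admits no homomorphism to $F(s)$: $F(s)$ maps homomorphically onto $F$, and we may assume we are in the nondegenerate case, so there is no homomorphism $G\to F$. Hence Ineq.~(\ref{upper-lower}) applies to $(F(s),G)$ and the lower bound holds.

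For the upper bound in part (1), take an $n$-vertex graph $H$ with $m$ edge-disjoint monochromatic copies of $F(s)$ and no rainbow $G$. Using the assumed decomposition, split each monochromatic $F(s)$-copy into $s^2$ edge-disjoint copies of $F$, and give each of these a new, distinct colour. This yields $s^2m$ edge-disjoint monochromatic $F$-copies. Any rainbow $G$ in the refined colouring uses edges from distinct small copies. These could, however, come from the same original $F(s)$-copy, so the refined colouring need not be rainbow-$G$-free. Two ways to handle this:

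\begin{itemize}
\item[(a)] Use a random sparsification: keep only one small copy from each $F(s)$-copy. This gives only $m$ copies and loses the factor $s^2$, so it is not enough on its own.
\item[(b)] Better, use the removal lemma. Each $F(s)$-copy spans only $O(1)$ edges, so a rainbow $G$ in the refined colouring that is not rainbow in the original colouring must use two edges from the same $F(s)$-copy. I would count such configurations and show they can be destroyed by deleting $o(n^2)$ edges, or argue via Proposition~\ref{prop-1} with packings.
\end{itemize}

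A cleaner route for (b) is a blow-up and regularity style argument. The standard proof of the bound $n^2/v(F)^2$ being tight presumably goes through homomorphism-free structure. Since all that is needed is the inequality $s^2m\le ex_F(n,G)+o(n^2)$, it suffices to show that the refined colouring contains only $o(n^2)$ ``bad'' rainbow copies of $G$ per relevant structure. I would then apply the graph removal lemma to the auxiliary structure to delete $o(n^2)$ small $F$-copies and kill every rainbow $G$. The conclusion is $s^2m\le n^2/v(F)^2+o(n^2)$, which is the desired bound.

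For part (2), it suffices to exhibit the decomposition when $s$ is a prime with $s\ge v(F)$. Label $V(F)=\{1,\dots,v\}$ and the blow-up classes $V_i=\{i\}\times\mathbb{Z}_s$. For each pair $(a,b)\in\mathbb{Z}_s^2$, define the copy $F_{a,b}$ with vertex $i$ mapped to $(i,a+bi)$, using the labels $i\in\mathbb{Z}_s$, which are distinct because $v\le s$. An edge $(i,x)(j,y)$ with $ij\in E(F)$ lies in $F_{a,b}$ iff $a+bi=x$ and $a+bj=y$. Since $i\ne j$ and $s$ is prime, this system has a unique solution $(a,b)$. So the $s^2$ copies partition $E(F(s))$, and part (1) then gives part (2).

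The main obstacle is the removal step in part (1): controlling rainbow $G$ copies that reuse one $F(s)$-copy while losing only $o(n^2)$ copies.
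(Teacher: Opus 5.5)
Your part (2) is correct. It is the same affine-line decomposition the paper uses, and your uniform labelling $i\mapsto(i,a+bi)$ even reduces edge-disjointness to a single invertibility check. Your plan for part (1) is also the paper's plan: refine each monochromatic $F(s)$-copy into $s^2$ monochromatic $F$-copies and compare with $ex_F(n,G)$.

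The gap is that the step that makes part (1) work is left as ``the main obstacle'' and never carried out. The version you sketch under the ``cleaner route'' also aims at the wrong thing. You try to destroy only the `bad' copies of $G$ that become rainbow after refinement, and you measure them as $o(n^2)$ ``per relevant structure''. The removal lemma cannot cheaply delete a prescribed sparse subfamily of copies in general. For example, a family of $\Theta(n^2)$ edge-disjoint triangles has only $o(n^3)$ members, yet destroying it needs $\Theta(n^2)$ deletions. What the removal lemma needs is that the \emph{total} number of copies of $G$ in the graph be $o(n^{v(G)})$.

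That total bound is exactly what the hypothesis gives you, and it lets you kill \emph{every} copy of $G$. Since $H$ has no rainbow $G$ in the original colouring, each copy of $G$ in $H$ contains two edges of one $F(s)$-copy. There are $O(n^2)$ such edge pairs, each spanning at least three vertices, so $H$ has only $O(n^{v(G)-1})=o(n^{v(G)})$ copies of $G$; this is Lemma~\ref{lem-star}. By the graph removal lemma, you can delete $o(n^2)$ edges, and hence discard at most $o(n^2)$ monochromatic $F(s)$-copies, to leave a graph that is $G$-free outright. Splitting the surviving $m-o(n^2)$ copies into $s^2$ copies of $F$ each then cannot create a rainbow $G$. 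So $s^2(m-o(n^2))\le ex_F(n,G)=n^2/v(F)^2+o(n^2)$, as required.

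This is the paper's proof. Equivalently, your passing mention of Proposition~\ref{prop-1} finishes it in one line: $ex_{F(s)}(n,G)\le \nu_n(F(s),G)+o(n^2)$, and $s^2\nu_n(F(s),G)\le \nu_n(F,G)\le ex_F(n,G)$. Route (a) and the regularity discussion are unnecessary.

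Finally, the nondegeneracy you say you ``may assume'' is forced by the hypothesis. $ex_F(n,G)=\Theta(n^2)$ excludes a homomorphism $G\to F$, hence also $G\to F(s)$, so the lower bound in Ineq.~(\ref{upper-lower}) applies to $(F(s),G)$.
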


For cycles $F$ and $G$, we extend the known results $F=C_5,G=C_3$ to more cycle pairs that attain the lower bound, and explore corresponding stabilities. 

\begin{theorem}\label{thm-3+}
    Suppose $k\geq 5$ is an odd integer, then 
    \begin{itemize}
      \item [(1).] for any positive integer $t$, $ex_{C_k(t)}(n,C_{k-2})=\frac{n^2}{(kt)^2}+o(n^2)$;
      \item [(2).] for any $\epsilon>0$, there exists an integer $N=N(k,t,\epsilon)$ such that for  each $n>N$, if $H$ is an $n$-vertex graph consisting of $ex_{C_k(t)}(n,C_{k-2})-\epsilon n^2$ monochromatic $C_k(t)$-copies and containing no rainbow $C_{k-2}$, then we can delete at most $2\epsilon(ktn)^2$ monochromatic $C_k(t)$-copies such that the resulting graph is a subgraph of a blow-up of $C_k$.
    \end{itemize}
\end{theorem}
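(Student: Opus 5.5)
The lower bound in (1) is the general lower bound in Ineq.~(\ref{upper-lower}), since $v(C_k(t))=kt$ and $C_{k-2}$ does not map homomorphically to $C_k(t)$ (a blow-up of $C_k$ has odd girth $k$). So only the upper bound $ex_{C_k(t)}(n,C_{k-2})\le n^2/(kt)^2+o(n^2)$ is needed. I would reduce the general $t$ to counting at the vertex level, then prove a weighted inequality. Let $H$ be a graph with $m$ monochromatic $C_k(t)$-copies and no rainbow $C_{k-2}$. First apply the removal lemma (the same style as in the proof of Proposition~\ref{prop-1}). Deleting $o(n^2)$ copies, I may assume that $H$ contains only $o(n^{k-2})$ copies of $C_{k-2}$, and after a further cleaning that it contains no copy of $C_{k-2}$ or of any shorter odd cycle. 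This uses two facts. A non-rainbow short odd cycle must use two edges of one monochromatic copy, and since that copy is bipartite-like (a blow-up of $C_k$) this can be rerouted. Also, a copy of $C_{k-2}$ that is not rainbow still forces structure that only $o(n^2)$ colors can carry. The key consequence is this. Inside each monochromatic copy $Q\cong C_k(t)$ with parts $V_1,\dots,V_k$, two vertices $x\in V_i$, $y\in V_{i+2}$ have a common neighbour in $Q$. Two vertices at cyclic distance $j$ in the $C_k$ structure are joined by paths of every length $\equiv j \pmod 2$ that are at least $j$.

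Next comes the central counting step. Call a pair $\{x,y\}$ of vertices \emph{covered} by $Q$ if $x,y$ lie in distinct parts of $Q$ at distance at least $2$ in $C_k$ (non-edges of $Q$ across parts), or in the same part. Each $Q$ covers $\binom{kt}{2}-kt^2$ such pairs, and $\binom{kt}{2}\approx (kt)^2/2$. I would show that, in the cleaned graph, every pair of vertices is "claimed" by at most $1+o(1)$ copies on average, with weights. Concretely, I would assign each pair $\{x,y\}$ of $Q$ (edge or non-edge) weight $1$, so that $Q$ contributes $\binom{kt}{2}$. Then the goal is to prove that the total weight received by any pair $\{x,y\}$ of $V(H)$ is at most $1$ up to $o(n^2)$ exceptions. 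That gives $m\binom{kt}{2}\le \binom n2+o(n^2)$, i.e.\ $m\le n^2/(kt)^2+o(n^2)$. Suppose two copies $Q,Q'$ both contain $x,y$. Then from $Q$ I take an $x$–$y$ path of a suitable length and parity, and from $Q'$ another path in different colours. Combining them with the structure of other copies should yield a rainbow odd cycle of length $k-2$ or a shorter odd cycle. The short cycle is excluded by the cleaning, and the rainbow $C_{k-2}$ is excluded by hypothesis. The parity/length bookkeeping is easiest via the odd-girth: $H$ has no odd cycle shorter than $k$ after cleaning. Then $x,y$ having distances $j$ in $Q$ and $j'$ in $Q'$ with $j+j'$ odd and $j+j'\le k-2$ is impossible, which leaves only limited configurations to kill. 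I expect \textbf{this step to be the main obstacle}, especially handling overlaps where the two paths share vertices. If the pair-weight approach fails, I would fall back to reducing to $t=1$ by taking a single transversal $C_k$ from each copy and invoking the $C_5$–$C_3$ type argument of Kov\'acs–Nagy generalized to $(C_k,C_{k-2})$. Theorem~\ref{thm-2}(1) does not directly go downward, so I would prefer the direct count.

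For (2), stability: if $m\ge ex-\epsilon n^2$, then the counting inequality is nearly tight. So almost every pair of vertices is covered exactly once and the cleaned $H$ has odd girth at least $k$. Since almost all pairs lie in copies, $H$ has about $\frac{k t^2}{\binom{kt}{2}}\cdot\frac{n^2}{2}\cdot\frac{2}{(kt)^2}\cdot\binom{kt}{2}$ edges, roughly $n^2/k$, with minimum degree structure that matches a balanced blow-up of $C_k$. I would then apply the known stability for dense graphs of odd girth $\ge k$ with near-regular degree $\approx 2n/k$ (Andrásfai–Erdős–Sós / homomorphism-threshold type results) to get a homomorphism to $C_k$ after deleting $O(\epsilon n^2)$ edges. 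Finally I would delete every monochromatic copy that touches a deleted edge or was removed in cleaning, at most $2\epsilon(ktn)^2$ copies after tracking constants.
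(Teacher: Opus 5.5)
Your lower bound is fine, but the upper bound has a genuine gap: the central pair-counting step fails in two independent ways.

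\emph{It cannot give the right constant.} Suppose every pair of $V(H)$ received weight at most $1$. Then $m\binom{kt}{2}\le\binom{n}{2}+o(n^2)$ only yields $m\le n^2/(kt(kt-1))+o(n^2)$. Replacing $\binom{kt}{2}$ by $(kt)^2/2$ throws away the factor $kt/(kt-1)$, which is a fixed constant for fixed $k,t$, not $1+o(1)$; for $k=5$, $t=1$ you would get $n^2/20$ instead of $n^2/25$. This loss is built into pair counting. In a balanced blow-up of $C_k$ carrying $n^2/(kt)^2$ copies, at least roughly $n^2/(2kt)$ pairs inside the parts lie in no copy. A sharp pair count would therefore need a separate proof that $\Omega(n^2)$ pairs are uncovered, and that is essentially the whole difficulty.

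\emph{The local claim is false.} Take two monochromatic copies inside a blow-up of $C_k$ with parts $V_1,\dots,V_k$. They can share $x\in V_1$ and $y\in V_3$ while using disjoint vertices in every other part. They are then edge-disjoint, and their union has odd girth $k$, so it contains no shorter odd cycle and no $C_{k-2}$ at all. Your parity bookkeeping only forces the two cyclic distances between $x$ and $y$ to coincide; it never forces uniqueness of the copy.

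\emph{The fallback fails too.} Keeping one transversal $C_k$ per copy gives only $m\le n^2/k^2$, a factor $t^2$ too weak. The correct reduction is to split each $C_k(t)$ into $t^2$ edge-disjoint $C_k$'s (possible for every $t$) and apply Theorem~\ref{thm-2}(1). That theorem goes exactly in the direction you need, but it still presupposes the case $t=1$, which you also leave open.

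The missing idea is to count vertex--copy incidences rather than pairs. Start by cleaning with Lemma~\ref{pre-lem-2}: every odd $C_\ell$ with $3\le\ell\le k-2$ is a homomorphic image of $C_{k-2}$. Your ``rerouting'' is not an argument, and a blow-up of $C_k$ is not bipartite. After this cleaning no odd cycle of length at most $k-2$ remains.

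Now let $F$ be a monochromatic copy with parts $X_0,\dots,X_{k-1}$. Any vertex $v$ has at most $2t$ neighbours in $F$, and exactly $2t$ only if they form $X_{i-1}\cup X_{i+1}$. This holds because neighbours in two parts at any other cyclic distance close a short odd cycle through $v$. Hence
$\sum_F\sum_{u\in V(F)}d(u)=\sum_F\sum_v|N(v)\cap V(F)|\le 2tnm$.
On the other hand, each $u$ lies in exactly $d(u)/(2t)$ copies, so the same sum equals $\sum_u d(u)^2/(2t)\ge 2e^2/(tn)$ with $e=kt^2m$. Comparing the two gives $m\le n^2/(kt)^2$ for every $t$ at once.

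The same count gives stability without heavy machinery. Near-equality produces a copy $C_0$ to which all but $O(\epsilon n)$ vertices send exactly $2t$ edges, each into some $Y_{i-1}\cup Y_{i+1}$. These neighbourhoods define the $k$ classes, and the absence of short odd cycles forces edges only between cyclically consecutive classes.

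Your stability sketch instead rests on the incorrect near-equality ``almost every pair is covered once''. Moreover, a homomorphism-threshold theorem would in addition need a genuine minimum-degree condition, which you never establish.
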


Now, we investigate graph pairs $(F,G)$ with $ex_F(n,G)$ attains the $(6,3)$-type bound $n^{2-o(1)}$. Kov\'{a}cs and Nagy \cite{Kovacs} generalized the $(6,3)$-theorem by showing that $ex_F(n,G)=n^{2-o(1)}$ if $G$ contains a triangle and there exists a homomorphism from $G$ to $F$. We explore graph pairs where $G$ is triangle-free and $ex_F(n,G)=n^{2-o(1)}$. The \emph{odd girth} of $G$, denoted by $g_o(G)$, is defined as the length of a shortest odd cycle in $G$. 
The main result of this part is as follows.

\begin{theorem}\label{thm-3}
    If $g_o(F)=g_o(G)=k$ and there exist homomorphisms from both $F$ and $G$ to $C_k$, then $ex_{F}(n,G)=n^{2-o(1)}$.
    Specifically, $ex_{C_k}(n,C_k)=n^{2-o(1)}$.    
\end{theorem}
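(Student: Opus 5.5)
The statement has two halves. The upper bound $ex_F(n,G)=o(n^2)$ should follow from the homomorphism criterion of Imolay, Karl, Nagy and V\'{a}li, and the lower bound $ex_F(n,G)\ge n^{2-o(1)}$ from a Ruzsa--Szemer\'{e}di type construction inside a blow-up of $C_k$, driven by a Behrend-type set.

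\emph{Upper bound.} Since $g_o(F)=k$, $F$ contains a copy of $C_k$, so $C_k\to F$. Composing with the given homomorphism $G\to C_k$ gives a homomorphism $G\to F$. By the result of \cite{Imolay} quoted before Ineq.~(\ref{upper-lower}), the pair is degenerate, so $ex_F(n,G)=o(n^2)$, and in particular $ex_F(n,G)\le n^{2-o(1)}$.

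\emph{Construction.} Fix a homomorphism $f:V(F)\to \mathbb{Z}_k$ in which $uv\in E(F)$ implies $f(v)=f(u)\pm1$. Let $M=\lfloor n/(k\,v(F))\rfloor$ and $V_i=[kM]\times V(F)$ for $i=0,\dots,k-1$. Let $A\subseteq[M/k]$ be a set to be chosen below. For every $x\in[M]$ and $d\in A$, place a monochromatic copy $F_{x,d}$ by sending $u\in V(F)$ to the vertex $(x+f(u)d,\,u)\in V_{f(u)}$, where $f(u)$ is read as an integer in $\{0,\dots,k-1\}$. Distinct vertices of $F$ have distinct second coordinates, so $F_{x,d}$ is a genuine copy of $F$.

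\emph{Edge-disjointness.} Take an edge between $V_i$ and $V_{i+1}$. For $i\le k-2$ its first coordinates differ by $d$; for the wrap-around pair $(V_{k-1},V_0)$ they differ by $(k-1)d$. In either case the edge determines $d$, and then $x$ is recovered from either endpoint, so the copies are edge-disjoint. The number of copies is $M|A|$.

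\emph{No rainbow $G$.} A rainbow $G$ contains a rainbow $C_k$, since $g_o(G)=k$. The host is a subgraph of a blow-up of $C_k$, so a closed walk of odd length $k$ has odd winding number, and since it has only $k$ steps it winds exactly once, monotonically through $V_0,V_1,\dots,V_{k-1}$. Let the edges from $V_j$ to $V_{j+1}$ come from copies with parameters $d_j$. Summing first-coordinate differences around the cycle gives
\[
d_0+d_1+\cdots+d_{k-2}=(k-1)\,d_{k-1}.
\]
We choose $A$ of size $M^{1-o(1)}$ whose only solutions to this equation are the trivial ones $d_0=\cdots=d_{k-1}$. This is possible by Behrend's construction: take integer points on a sphere, which is strictly convex, or apply Ruzsa's result for invariant equations whose coefficients sum to zero. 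Positions are bounded by $kM$ with no modular wraparound, so the integer equation is exactly what arises. In the trivial case all $d_j=d$, and then consecutive edges share a vertex, which forces the same $x$. Hence all edges of the cycle lie in one copy, a contradiction. Therefore there is no rainbow $G$, and
\[
ex_F(n,G)\ge M|A|=n^{2-o(1)}.
\]
The special case $F=G=C_k$ follows immediately.

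\emph{Main obstacle.} The delicate step is the winding argument together with the Behrend-type set. One must check that the only rainbow cycles of length $k$ are the monotone ones, so that the single equation above is the only constraint, and that a set of density $M^{-o(1)}$ avoiding its nontrivial solutions exists. Convexity of spheres under the weighted average on the right-hand side handles the latter.
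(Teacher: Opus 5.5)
Your argument is correct and is essentially the paper's. The upper bound comes from the homomorphism $G\to C_k\to F$, and the lower bound from Ruzsa's set with no nontrivial solution of $x_1+\cdots+x_{k-1}=(k-1)x_k$, used to place arithmetic-progression $k$-cycles in a blow-up of $C_k$. The only difference is that you embed $F$ directly via its homomorphism to $C_k$ and rule out rainbow $C_k$ by the winding argument, whereas the paper blows its $C_k$-construction up to $C_k(t)\supseteq F$ and invokes Proposition~\ref{leqleq}.

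There is one minor slip: with $V_i=[kM]\times V(F)$ and $M=\lfloor n/(k\,v(F))\rfloor$, your host has about $kn$ vertices. Shrinking $M$ by a constant factor (e.g.\ $M=\lfloor n/(k^2v(F))\rfloor$) fixes this and does not affect the $n^{2-o(1)}$ bound.
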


Based on the above theorems, we can completely determine the necessary and sufficient conditions for the multicolor Tur\'{a}n number to attain three natural thresholds when both $F$ and $G$ are cycles.

\begin{remark}
Recall that for simple graphs $G$ and $F$, $ex_F(n,G)$ attains the upper bound in Ineq.~(\ref{upper-lower}) if and only if $\chi(G)>\chi(F)$. This implies $ex_{C_k}(n,C_\ell)$ attains the upper bound in Ineq. (\ref{upper-lower}) if and only if $k$ is even and $\ell$ is odd. 
Theorems \ref{thm-1} and \ref{thm-3+} indicate that $ex_{C_k}(n,C_\ell)$ attains the lower bound in Ineq.(\ref{upper-lower}) if and only if $k,\ell$ are odd and $\ell=k-2$.
Theorems \ref{thm-4} and \ref{thm-3}  indicate that $ex_{C_k}(n,C_\ell)=n^{2-o(1)}$ if and only if $k=\ell\geq 3$ are odd integers.
\end{remark}

The paper is organized as follows.  
Section~2 collects several tools that will be used in subsequent proofs.  
In Section~3 we prove our first main result concerning the degenerate case (Theorem~\ref{thm-4}); the strengthened result when $F$ is a cycle and $G=C_4$ (Theorem~\ref{Thm-c4}) requires a more detailed discussion, and its proof is deferred to Section~7.  
Sections~4 and~5 focus on graph pairs that attain the lower bound in Ineq.~(\ref{upper-lower}); in these sections, we prove Theorems~\ref{thm-1} and~\ref{thm-2}, respectively.  
In Section~6 we present a proof of Theorem~\ref{thm-3}, which generalizes the $(6,3)$-theorem.
The final section contains some concluding remarks and some open problems.

\bigskip

\noindent{\bf Notations:} A {\em blow-up} of $G$ is a graph obtained by replacing each vertex $v\in V(G)$ with a vertex set $X_v$, and any $a\in X_u,b\in X_v$ form an edge only if $uv\in E(G)$. If $|X_v|=t$ for each $v\in V(G)$, then such a blow-up of $G$ is called a {\em $t$-blow-up} of $G$, and denoted by $G(t)$.
Each such $X_v$ is called a {\em part} of the blow-up.
For a vertex $v\in V(G)$ and a vertex set $A\subseteq V(G)$, the {\em distance} from $v$ to $A$, denoted by $dist_G(v,A)$, is the minimum length of a path with endpoints $v$ and some $a\in A$. 
For a hypergraph $\mathcal{H}$ and an edge $e\in E(\mathcal{H})$, we use $V(e)$ denote the set of vertices in $e$.

\section{Preliminaries}

In this section, we will list some tools that will be used in this paper, including necessary concepts, notations, and lemmas. In addition, we present the proof of Proposition \ref{prop-1}.

For two graphs $G$ and $H$, let ${G\choose H}$ denote the set of all copies of $H$ in $G$.
A map $\psi^*: {G\choose H}\rightarrow [0,1]$ such that for each $e\in E(G)$
$$\sum_{e\in E(H'), H'\in {G\choose H}}\psi^*(H')\leq 1$$
is called a {\em fractional $H$-packing} of $G$.
The maximum value of $\sum_{H'\in {G\choose H}}\psi^*(H')$, denoted by $\nu^*_G(H)$, is called the {\em fractional $H$-packing number} of $G$.
Recall that the $H$-packing number of $G$, denoted by $\nu_G(H)$, is the maximum number of edge-disjoint copies of $H$ in $G$.
It is clear that $\nu^*_G(H)\geq \nu_G(H)$.
The following result establishes a relationship between graph packing number and fractional packing.
\begin{theorem}[Haxell and R\"{o}dl~ \cite{Haxell}]\label{thm-Haxell}
    For any  fixed graph $H$ and an $n$-vertex graph $G$, $\nu^*_G(H)-\nu_G(H)=o(n^2)$.
\end{theorem}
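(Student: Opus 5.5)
The plan is to pass through Szemerédi's regularity lemma, turning the fractional packing into a weighted problem on a bounded-size reduced graph. That reduced problem can then be realised integrally, losing only $o(n^2)$. Since $\nu^*_G(H)\ge\nu_G(H)$ is trivial, we must show $\nu_G(H)\ge\nu^*_G(H)-\gamma n^2$ for every fixed $\gamma>0$ and all large $n$.

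\textbf{Step 1: clean up.} Fix $\gamma$ and choose $\epsilon\ll d\ll\gamma$. Apply the regularity lemma to $G$, obtaining clusters $V_1,\dots,V_m$ with $m\le M(\epsilon)$. Delete the edges inside clusters, in irregular pairs, and in pairs of density below $d$; call the result $G'$. At most $(\epsilon+d+1/m)n^2$ edges are deleted. Restrict an optimal fractional packing $\psi^*$ to copies of $H$ lying in $G'$. Each deleted edge carries total weight at most $1$, so
\[\nu^*_{G'}(H)\ge\nu^*_G(H)-(\epsilon+d+1/m)n^2 .\]

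\textbf{Step 2: reduce to patterns.} Every copy of $H$ in $G'$ determines a \emph{pattern}: a homomorphism $\varphi:H\to R$, where $R$ is the reduced graph. Let $w_\varphi$ be the total $\psi^*$-weight of copies with pattern $\varphi$.
\begin{itemize}
\item The number of patterns is bounded by a function of $m$ and $v(H)$.
\item For each regular pair $(V_i,V_j)$, the packing constraint gives $\sum_\varphi w_\varphi\cdot|\varphi^{-1}(ij)|\le e(V_i,V_j)$.
\end{itemize}
For each pair, randomly partition $E(V_i,V_j)$ into slices, one per (pattern, edge of $H$ mapped to $ij$). The slice for $(\varphi,e)$ gets density proportional to $w_\varphi$, so it has about $w_\varphi$ edges; patterns of negligible weight are dropped. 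By standard concentration, every slice of non-negligible density is still $\epsilon'$-regular. Rounding losses and discarded slices cost only $o(n^2)$ total.

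\textbf{Step 3: near-perfect packing inside each pattern.} For a fixed $\varphi$, the slices form a regular blow-up-type structure $B_\varphi$ modelled on $H$ in which every edge class has about $w_\varphi$ edges. The goal is to find about $(1-\delta)w_\varphi$ edge-disjoint copies of $H$ in $B_\varphi$, one edge from each class.

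Build the auxiliary $e(H)$-uniform hypergraph whose vertices are the edges of $B_\varphi$ and whose hyperedges are the edge sets of copies of $H$ respecting the pattern. Regularity, via the counting lemma, shows that all but an $\epsilon''$-fraction of edges of $B_\varphi$ lie in $(1\pm\delta)D$ such copies for a common $D=\Theta(n^{v(H)-2})$, after rescaling class sizes to equalise. Any two edges lie together in only $O(n^{v(H)-3})=o(D)$ copies. Discard the atypical edges, then apply the Frankl--Rödl / Pippenger--Spencer nibble theorem. This yields a matching covering all but a $\delta$-fraction of the vertices, i.e. about $(1-\delta)w_\varphi$ edge-disjoint copies of $H$.

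Summing over patterns, the copies are edge-disjoint because the slices are disjoint. This gives $\nu_G(H)\ge\sum_\varphi w_\varphi-\gamma n^2\ge\nu^*_G(H)-\gamma n^2$.

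\textbf{Main obstacle.} I expect Step 3 to be the main difficulty. It requires approximate degree regularity of the auxiliary hypergraph, uniformly across the differently sized edge classes of $B_\varphi$. This is why Step 2 slices each pair so that every edge class of a pattern has the same size $w_\varphi$. I would first try to obtain the degree condition from the counting lemma applied to neighbourhoods in the sliced regular pairs, discarding the $o(n^2)$ atypical edges. Patterns in which two vertices of $H$ map to the same cluster need care, since the embedding must stay injective; they are handled in the same way, because non-injective maps contribute a lower-order count.
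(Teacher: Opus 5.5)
The paper does not prove this statement; it only quotes it from Haxell and R\"odl. Your architecture is the standard one:
\begin{itemize}
\item the regularity lemma;
\item weights $w_\varphi$ on homomorphic patterns;
\item slicing each pair so that every edge class of a pattern has about $w_\varphi$ edges, which does make the expected degree independent of the role of the edge;
\item a Pippenger--Spencer nibble with codegrees $O(n^{v(H)-3})=o(D)$.
\end{itemize}
However, Steps 2--3 as written have a genuine gap in the order of constants. You get the degree condition from $\epsilon'$-regularity of the \emph{slices} via the counting lemma, but the slice densities are not bounded below in terms of $\epsilon$.

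A pair $(V_i,V_j)$ can be shared by $\Theta(m^{v(H)-2})$ patterns, and $m$ may be as large as $M(\epsilon)$. Take $H=K_3$ with $\psi^*$ spread evenly over triangles meeting many triples of clusters. Then every slice has density about $d_{ij}/(m-2)$, far below $\epsilon$ once $m\gg 1/\epsilon$, and the regularity lemma does not exclude this.
\begin{itemize}
\item Dropping ``negligible'' patterns cannot help. Keeping the dropped weight below $\gamma n^2$ forces a density threshold of about $\gamma m^{2-v(H)}$, so you would need $\epsilon\ll \gamma M(\epsilon)^{2-v(H)}$, which is impossible.
\item Concentration does not rescue the slices' regularity either. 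A random $p$-slice of an $\epsilon$-regular pair has density error about $p\epsilon$, but only on sets of size at least $\epsilon|V_i|$. The counting lemma at density $\rho$ needs that size threshold to be small compared with a power of $\rho$, so it says nothing about such slices.
\end{itemize}

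The repair is to never use regularity of the slices.
\begin{enumerate}
\item Choose $\eta\ll\gamma M(\epsilon)^{-v(H)}$ after $\epsilon$, and drop only patterns with $w_\varphi<\eta n^2$. Every remaining slice probability $p_{\varphi,f}$ is then a constant independent of $n$.
\item Take the degree information from the original pairs, whose densities are at least $d\gg\epsilon$. Write $N_{f_0}(xy)$ for the number of pattern-$\varphi$ copies of $H$ in $G'$ in which $xy$ plays the role $f_0$. Count $H$ and two copies of $H$ glued along $f_0$, and apply Chebyshev. This shows $N_{f_0}(xy)=(1\pm\delta)\prod_{f\neq f_0}d_f\,(n/m)^{v(H)-2}$ for all but an $\epsilon''$-fraction of the edges $xy$ of the pair.
\item The degree of $xy$ in your auxiliary hypergraph is a function of the independent slice assignments. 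Its mean is $\prod_{f\neq f_0}p_{\varphi,f}\,N_{f_0}(xy)$, which by your equal-size slicing is the same $D$ for every role.
\item Concentrate this degree with McDiarmid's inequality. Reassigning one of the $O(n)$ edges at $x$ or $y$ changes it by $O(n^{v(H)-3})$, and reassigning any other edge changes it by $O(n^{v(H)-4})$. This gives failure probability $e^{-\Omega(n)}$, which survives a union bound over all edges however small the constant slice probabilities are.
\item Discarding atypical edges can lower the degrees of typical ones. Either re-trim, using that few copies are destroyed in total, or use the nibble version that needs only maximum degree at most $(1+\delta)D$ and degree at least $(1-\delta)D$ for all but a $\delta$-fraction of vertices.
\end{enumerate}
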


The following lemma shows that in the multicolor Tur\'{a}n problem, there are very few non-rainbow copies of $G$.
\begin{lemma}\label{lem-star}
    For an $n$-vertex $r$-uniform hypergraph $H$ consisting of some monochromatic $F$-copies, there are $O(n^{v(G)-1})$ non-rainbow copies of $G$.
\end{lemma}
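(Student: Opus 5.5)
Every non-rainbow copy of $G$ has two edges carrying the same color, so I will count copies by charging each one to such a pair of edges.

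Fix a non-rainbow copy $G'$ of $G$ in $H$. By definition there are two distinct edges $e_1,e_2\in E(G')$ lying in the same monochromatic $F$-copy, say $F_i$. The counting then proceeds in three stages:
\begin{enumerate}
\item Choose the color class $F_i$.
\item Choose the pair $\{e_1,e_2\}\subseteq E(F_i)$.
\item Choose the remaining vertices of $G'$.
\end{enumerate}

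Since $F$ is fixed, $F_i$ has at most $\binom{e(F)}{2}=O(1)$ pairs of edges. The edges $e_1$ and $e_2$ are distinct, so together they span at least $r+1$ vertices. (In the graph case $r=2$ this means at least $3$ vertices; in general $|V(e_1)\cup V(e_2)|\ge r+1$.) All of these vertices lie inside $V(F_i)$, and $|V(F_i)|=v(F)=O(1)$. Hence, once $F_i$ and the pair are fixed, at least $r+1$ vertices of $G'$ are determined up to $O(1)$ choices. These choices include how the vertices of $e_1\cup e_2$ are matched to vertices of $G$ and which edges of $G$ play the roles of $e_1,e_2$; both are bounded by constants depending only on $G$ and $F$. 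The remaining at most $v(G)-(r+1)$ vertices of $G'$ can then be chosen in at most $n^{v(G)-r-1}$ ways.

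Next I bound the number of color classes. The monochromatic copies are edge-disjoint, and $H$ has at most $\binom{n}{r}$ edges, so there are at most $\binom{n}{r}/e(F)=O(n^r)$ classes $F_i$. Multiplying the three stages gives
$$O(n^r)\cdot O(1)\cdot O(n^{v(G)-r-1})=O(n^{v(G)-1})$$
non-rainbow copies of $G$, which is the claim.

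The one point to check carefully is that a choice of $F_i$ and a pair of its edges pins down at least $r+1$ vertices of $G'$ with only constantly many options, rather than merely $r$ vertices. This holds because two distinct $r$-sets have a union of size at least $r+1$, and both sets sit inside the bounded vertex set $V(F_i)$. Every other step is routine bookkeeping with constants depending only on $F$, $G$ and $r$.
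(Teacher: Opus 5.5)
Your proposal is correct and follows essentially the same argument as the paper: charge each non-rainbow copy to a same-colored pair of edges $e_1,e_2$, note there are $O(n^r)\cdot\binom{e(F)}{2}$ such pairs with $|V(e_1)\cup V(e_2)|\ge r+1$, and bound the number of completions by $O(n^{v(G)-r-1})$. Your version is slightly more explicit than the paper's, since it justifies the $O(n^r)$ count of monochromatic copies via edge-disjointness and accounts for the constant number of labelings.
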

\begin{proof}
Since every non-monochromatic copy of $G$ contains two edges $e_1,e_2$ of the same color (otherwise it would be rainbow) and $|e_1\cup e_2|\geq r+1$, there are at most 
$$\#\mbox{monochromatic }F\mbox{-copies}\cdot{e(F)\choose 2}=O(n^r)$$
choices of such monochromatic edge pairs $e_1$ and $e_2$, and hence there are 
$$O(n^r)\cdot O(n^{v(G)-|e_1\cup e_2|})\leq O(n^r)\cdot O(n^{v(G)-(r+1)})=O(n^{v(G)-1})$$ 
non-rainbow copies of $G$.
\end{proof}

For each graph $G$, let 
$$\mathrm{Epi}(G)=\{G':\mbox{there is an epimorphism from }G\mbox{ to }G'\}.$$
The following two lemmas provide preprocessing methods for graphs, and we present their proofs simultaneously.

\begin{lemma}\label{pre-lem-2}
    Suppose $H$ is an $n$-vertex rainbow $G$-free graph consisting of monochromatic $F$-copies, then we can remove at most $o(n^2)$ monochromatic $F$-copies such that the resulting graph is $\mathrm{Epi}(G)$-free. 
\end{lemma}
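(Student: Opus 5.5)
The plan is to combine Lemma~\ref{lem-star} with the graph removal lemma. Since $\mathrm{Epi}(G)$ contains only finitely many graphs (each has at most $v(G)$ vertices), it suffices to treat one $G'\in\mathrm{Epi}(G)$ at a time and add up the $o(n^2)$ losses. We may also assume $G'\neq G$. Indeed, $G$ itself is already handled by the same argument, since the non-rainbow copies of $G$ are few by Lemma~\ref{lem-star}. Moreover, once every proper epimorphic image has been treated, no copy of $G$ in the remaining graph can be degenerate.

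\textbf{Step 1: $G'$ has few copies in $H$.} Fix $G'$ with an epimorphism $\varphi:G\to G'$. Then $v(G')\le v(G)$, and if $G'\ne G$ we may assume $\varphi$ identifies at least two vertices. This is because a bijective epimorphism onto a simple graph yields a supergraph of $G$ on the same vertex set, and such a $G'$ contains $G$, so that case reduces to copies of $G$.

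\begin{itemize}
\item \emph{Identifying case.} If $\varphi$ identifies vertices, then $v(G')\le v(G)-1$, so trivially $H$ has at most $n^{v(G')}=O(n^{v(G)-1})=o(n^{v(G')+1})$ copies of $G'$. This alone is not $o(n^{v(G')})$, so we need more.
\item \emph{Repairing by blow-up.} Each copy of $G'$ together with choices of extra vertices gives homomorphic images of $G$, but extra vertices are not adjacent. So instead I argue directly. Suppose $H$ contains $\delta n^{v(G')}$ copies of $G'$. By supersaturation for homomorphisms, the blow-up $G'(v(G))$ then appears $\Omega(n^{v(G')v(G)})$ times. Since $G$ maps epimorphically to $G'$, $G$ is a subgraph of $G'(v(G))$. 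We therefore get $\Omega(n^{v(G)})$ copies of $G$ in $H$.
\item \emph{Contradiction.} By Lemma~\ref{lem-star}, only $O(n^{v(G)-1})$ of these copies are non-rainbow. Hence for large $n$ a rainbow copy of $G$ exists, a contradiction.
\end{itemize}

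So $H$ contains $o(n^{v(G')})$ copies of each $G'\in\mathrm{Epi}(G)$. The case $G'=G$ follows directly from Lemma~\ref{lem-star}, because all copies of $G$ in $H$ are non-rainbow.

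\textbf{Step 2: removal.} By the graph removal lemma, for each $G'$ we can delete $o(n^2)$ edges of $H$ so that no copy of $G'$ remains. Each deleted edge lies in exactly one monochromatic $F$-copy. We remove that whole copy, so at most $o(n^2)$ copies are removed, i.e.\ $e(F)\cdot o(n^2)$ edges. Removing only whole copies preserves the structure of a union of monochromatic $F$-copies. Deleting more edges cannot create copies of $G'$, so summing over the finitely many $G'$ gives the lemma.

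The main obstacle is the supersaturation step: we must make sure that a positive density of $G'$ forces $\Theta(n^{v(G)})$ genuine copies of $G$, with distinct vertices, via $G\subseteq G'(v(G))$. This is the standard Erd\H{o}s–Simonovits supersaturation for blow-ups, and I would invoke it directly.
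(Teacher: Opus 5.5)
Your proposal is correct and follows the paper's route. You show that each $G'\in\mathrm{Epi}(G)$ has $o(n^{v(G')})$ copies in $H$ by passing to a blow-up of $G'$ that contains $G$ and contradicting Lemma~\ref{lem-star}, then apply the removal lemma and discard the whole monochromatic $F$-copies through the deleted edges. The only difference is that you cite Erd\H{o}s--Simonovits supersaturation for blow-ups (using $G'(v(G))$), whereas the paper reproves it inline via hypergraph supersaturation for $K^{(\ell)}_{\ell;r}$ plus a Ramsey argument on embedding patterns (using $G'(\alpha(G))$), and it also writes out the double count that you leave implicit: each copy of $G$ lies in $O(n^{r\ell-v(G)})$ copies of the blow-up.
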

\begin{proof}
We claim the following:
\begin{itemize}
\item[($\clubsuit$)] For each integer $1\leq \ell\leq v(G)$ and each $G'\in \mathrm{Epi}(G)$ with $v(G')=\ell$, the number of copies of $G'$ in $H$ is $o(n^{\ell})$.
\end{itemize}
Since $\mathrm{Epi}(G)$ is finite, by the Removal Lemma we can obtain an $\mathrm{Epi}(G)$-free graph by removing at most $o(n^2)$ edges from $H$, together with the corresponding monochromatic $F$-copies.

It suffices to prove statement ($\clubsuit$). By Lemma \ref{lem-star}, the number of copies of $G$ in $H$ is $O(n^{v(G)-1})$ (the case $\ell=v(G)$). Hence, assume $\ell<v(G)$. Suppose for contradiction that there exist a constant $c_0>0$ and infinitely many integers $n$ such that $H$ contains at least $c_0 n^{\ell}$ copies of some $G'\in\mathrm{Epi}(G)$ with $v(G')=\ell$. Regarding each vertex set of such a copy as an $\ell$-edge, we construct an $n$-vertex $\ell$-uniform hypergraph $\mathcal{G}$ without parallel edges.
Since there are finite many copies of $G'$ sharing the common vertex set, there exists a $c_0'$ with $0<c_0'\leq c_0$ such that $e(\mathcal{G})\geq c_0' n^{\ell}$. For each edge $e\in E(\mathcal{G})$, let $G'_e$ denote a corresponding copy of $G'$ on $V(e)$. For a subhypergraph $\mathcal{F}$ of $\mathcal{G}$, define $\mathcal{F}_H$ as the subgraph $\bigcup_{e\in E(\mathcal{F})} G'_e$ of $H$.

Let $K_{\ell;a}^{(\ell)}$ denote the complete $\ell$-partite $\ell$-uniform hypergraph with each part of size $a$. Let $\alpha$ be the independence number of $G$. Clearly, $G'(\alpha)$ contains a copy of $G$. Hypergraph Ramsey theory implies that there exists an integer $r$ such that any $\ell!$-coloring of the edges of $K_{\ell;r}^{(\ell)}$ yields a monochromatic $K_{\ell;\alpha}^{(\ell)}$. Since the Tur\'{a}n density of $K_{\ell;r}^{(\ell)}$ is zero, the Supersaturation Lemma guarantees that $\mathcal{G}$ contains $c_1 n^{r\ell}$ copies of $K_{\ell;r}^{(\ell)}$ for some $c_1>0$.

Fix a copy $G^*$ of $G'$. For each copy of $K_{\ell;r}^{(\ell)}$ in $\mathcal{G}$ with parts $X_1,\dots,X_\ell$, color each edge $e$ of this $K_{\ell;r}^{(\ell)}$ by the bijection $\psi_e: V(G^*)\to[\ell]$ satisfying: $uv\in E(G^*)$ if and only if there is an edge of $G'_e$ between $X_{\psi_e(u)}$ and $X_{\psi_e(v)}$. The number of possible colors is at most $\ell!$, so by the Ramsey property there exists a monochromatic copy $\mathcal{K}$ of $K_{\ell;\alpha}^{(\ell)}$ inside this $K_{\ell;r}^{(\ell)}$. Then $\mathcal{K}_H$ is isomorphic to $G'(\alpha)$, which contains a copy of $G$. Consequently, for every copy $\mathcal{L}$ of $K_{\ell;r}^{(\ell)}$ in $\mathcal{G}$, the subgraph $\mathcal{L}_H$ contains a copy of $G'(\alpha)$ and hence a copy of $G$.

Conversely, each copy of $G$ in $H$ can arise from at most $O(n^{r\ell-v(G)})$ copies $\mathcal{L}$ of $K_{\ell;r}^{(\ell)}$ in $\mathcal{G}$ (i.e., those $\mathcal{L}$ with $\mathcal{L}_H$ containing that $G$). Therefore,
$$
\#\{\text{copies of } G \text{ in } H\} \geq \frac{\#\{\text{copies of } K_{\ell;r}^{(\ell)} \text{ in } \mathcal{G}\}}{O(n^{r\ell-v(G)})} 
\geq \frac{c_1n^{r\ell}}{O(n^{r\ell-v(G)})}
=\Omega(n^{v(G)}),
$$
which contradicts the fact that the number of copies of $G$ in $H$ is $O(n^{v(G)-1})$.
\end{proof}

\begin{lemma}\label{lem-random}
If $F$ is a graph of order $k$ and $G$ is an $n$-vertex graph consisting of $t$ monochromatic $F$-copies $F_1,F_2,\ldots F_t$, then there is a partition $V_1,V_2,\ldots, V_k$ of $V(G)$ and a bijection $f:[k]\rightarrow V(F)$, such that there are at least $t/k^k$ $F_i$s satisfying  \begin{itemize}
    \item [(1).] $|V(F_i)\cap V_j|=1$ for each $j\in [k]$ (say $w_j=V(F_i)\cap V_j$), and 
    \item [(2).] $w_aw_b\in E(F_i)$ if and only if $f(a)f(b)\in E(F)$.
\end{itemize}
\end{lemma}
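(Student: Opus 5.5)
A random partition argument should work. Choose a uniformly random map $\phi: V(G)\to[k]$, assigning each vertex independently and uniformly to one of $V_1,\dots,V_k$. Each $F_i$ has exactly $k$ vertices, and I will track the event that $\phi$ restricted to $V(F_i)$ is a bijection onto $[k]$. Each of the $k!$ bijections occurs with probability $k^{-k}$, so this event has probability $k!/k^k$. When it happens, $\phi$ induces a bijection $V(F_i)\to[k]$. I fix once and for all an isomorphism $\sigma_i: F\to F_i$ for each $i$. Then $\phi\circ\sigma_i: V(F)\to[k]$ is a bijection, and its inverse is a candidate for $f$.

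Next I define the \emph{type} of $F_i$ under $\phi$ to be the bijection $f_i=(\phi\circ\sigma_i)^{-1}:[k]\to V(F)$. Suppose $F_i$ has type $f$, and set $w_j=\sigma_i(f(j))$, which is the unique vertex of $F_i$ in $V_j$. Then condition (1) holds. Condition (2) also holds: $w_aw_b\in E(F_i)$ iff $\sigma_i(f(a))\sigma_i(f(b))\in E(F_i)$ iff $f(a)f(b)\in E(F)$, because $\sigma_i$ is an isomorphism. So it suffices to find a partition and a single bijection $f$ such that at least $t/k^k$ copies have type $f$.

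For a fixed bijection $f$, the probability that $F_i$ has type $f$ is exactly $k^{-k}$. This is because the type equals $f$ iff $\phi(\sigma_i(f(j)))=j$ for all $j$, which fixes $\phi$ on the $k$ vertices of $F_i$. By linearity of expectation, the expected number of $F_i$ of type $f$ is $t/k^k$. Hence some outcome $\phi$ achieves at least $t/k^k$ copies of type $f$, and we take $V_j=\phi^{-1}(j)$. I do not need to average over all $k!$ types, which would give the stronger bound $t\,k!/k^k$ split among $k!$ types; fixing $f$ in advance already gives $t/k^k$ directly.

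The only subtle step is making condition (2) independent of automorphisms of $F$. Fixing the isomorphisms $\sigma_i$ up front settles this, since some automorphism-equivalent labellings may be counted as different types but each fixed type still works. Some parts $V_j$ could be empty when $n$ is small. If a genuine partition with nonempty parts is required, this is harmless when $t\ge1$: a copy of type $f$ meets every $V_j$, so all parts are nonempty. If $t=0$ the statement is trivial.
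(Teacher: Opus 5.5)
Your proof is correct and follows the paper's random-partition argument. The paper first shows that the expected number of copies meeting every part exactly once is $k!\,t/k^k$, then pigeonholes over the $k!$ bijections; you instead fix $f$ and the isomorphisms $\sigma_i$ in advance and compute the expectation $t/k^k$ directly, which gives the same bound and makes explicit the assignment of copies to bijections that the paper leaves implicit.
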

\begin{proof}
Partition $V(G)$ into $V_1,V_2,\ldots,V_k$ randomly and uniformly 
(i.e. put each vertex of $H$ into each $V_i$ with the probability $1/k$).
Let $\mathcal{S}=\{F_j:|V(F_j)\cap V_i|=1\mbox{ for each }i\in [k]\}$.
Since the expectation of $|\mathcal{S}|$ is $k!t/k^k$, there exists such a partition $V_1,V_2,\ldots,V_{k}$ of $V(G)$ such that $|\mathcal{S}|\geq k!t/k^k$.
Since there are $k!$ bijection from $[k]$ to $V(F)$, there exists a bijection $f$ such that there are at least $t/k^k$ $F_i$s satisfying statement (2).
\end{proof}

The following result, which appears in \cite{Kovacs}, is straightforward.

\begin{proposition}[Kov\'{a}cs and Nagy~\cite{Kovacs}]\label{leqleq}
If $G'$ is a subgraph of $G$ and $F'$ is a subgraph of $F$, then $ex_{F}(n,G')\leq ex_{F'}(n,G')\leq ex_{F'}(n,G)$.
\end{proposition}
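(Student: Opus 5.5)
We prove the two inequalities separately. Both follow directly from the definition of $ex_F(n,G)$ as the maximum number of monochromatic $F$-copies in an $n$-vertex graph with no rainbow copy of $G$.

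\emph{First inequality, $ex_F(n,G')\le ex_{F'}(n,G')$.} Take an optimal configuration for the left side. This is an $n$-vertex graph $H$ with $ex_F(n,G')$ edge-disjoint monochromatic $F$-copies $F_1,\dots,F_t$ and no rainbow $G'$. Since $F'\subseteq F$, each $F_i$ contains a copy $F_i'$ of $F'$. Let $H'=\bigcup_i F_i'$, and give each $F_i'$ the colour of $F_i$. The $F_i'$ are still edge-disjoint, so $H'$ consists of $t$ monochromatic $F'$-copies. Every edge of $H'$ keeps its colour from $H$, so a rainbow copy of $G'$ in $H'$ would also be a rainbow copy of $G'$ in $H$, which is impossible. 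Hence $ex_{F'}(n,G')\ge t$.

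\emph{Second inequality, $ex_{F'}(n,G')\le ex_{F'}(n,G)$.} Take any configuration of monochromatic $F'$-copies with no rainbow $G'$. If it contained a rainbow copy of $G$, then the copy of $G'$ inside that copy of $G$ would use a subset of its edges. Those edges have pairwise distinct colours, so this copy of $G'$ would be rainbow, a contradiction. So the configuration has no rainbow $G$ either, and it is admissible for $ex_{F'}(n,G)$. Taking the maximum over all such configurations gives the inequality.

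No step here is a real obstacle. The only point to check is that the restricted copies $F_i'$ remain edge-disjoint and inherit distinct colours, and this holds because each $F_i'$ is a subgraph of $F_i$.
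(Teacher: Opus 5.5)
Your proof is correct. The paper gives no proof of its own: it calls the result straightforward and attributes it to Kov\'{a}cs and Nagy. Your two steps (restricting each monochromatic $F$-copy to an $F'$-copy with inherited colours, and noting that any rainbow $G$ contains a rainbow $G'$) are exactly the routine verification that remark refers to.
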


A {\em theta graph}, denoted by $\theta_k$, is a cycle $C_k$ with a chord. 
Let $\theta_{\geq k}$ denote the set of all $\theta_t$ with $t\geq k$.
In the following, we give a result by F\"{u}redi and \"{O}zkahya (see Lemma 7 in \cite{Furedi}).

\begin{lemma}[F\"{u}redi and \"{O}zkahya~\cite{Furedi}]\label{Ozkahya}
Let $G$ be a $C_{2k+1}$-free graph and let $T$ be a subtree in $G$ with root $x$. 
Let $V_i$ denote the set of vertices of distance $i$ from $x$ in the tree $T$.  
Then for each $i\leq k$, $G[V_i]$ is $\theta_{\geq 2k}$-free.
\end{lemma}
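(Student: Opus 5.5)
The plan is to argue by contradiction: a $\theta_{\ge 2k}$ inside a level $V_i$, combined with the tree $T$, yields a cycle of length exactly $2k+1$ in $G$. Suppose that for some $i\le k$ the graph $G[V_i]$ contains a theta graph $\Theta$ consisting of a cycle $C_t$, $t\ge 2k$, together with a chord. Note $i\ge 1$, since $V_0=\{x\}$.

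\emph{Locating a branching vertex.} Let $y$ be the deepest vertex of $T$ whose subtree contains all of $V(\Theta)$, and let $j<i$ be its depth. By the choice of $y$, the vertices of $\Theta$ lie in the subtrees of at least two different children of $y$. Let $A$ be the set of vertices of $\Theta$ lying in the subtree of one such child, and put $B=V(\Theta)\setminus A$. Then $A$ and $B$ are both nonempty. For $a\in A$ and $b\in B$, the tree paths from $a$ to $y$ and from $b$ to $y$ each have length $i-j$. They meet only at $y$, and all their vertices other than $a$ and $b$ lie at levels $<i$, hence outside $V(\Theta)$.

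\emph{Key combinatorial lemma.} I would prove the following statement about theta graphs. Let $\Theta$ be a cycle $C_t$ with a chord, and let $(A,B)$ be a partition of $V(\Theta)$ into nonempty parts. Then for every $\ell$ with $1\le \ell\le t-1$, $\Theta$ contains an $A$--$B$ path of length $\ell$. The only exception is when $\Theta$ is bipartite and $(A,B)$ is its bipartition, and even then every odd $\ell\le t-1$ is attained. In particular, every odd $\ell\le t-1$ is always attained, and odd lengths are all this argument needs.

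I would prove the lemma in three steps.
\begin{itemize}
\item Walking around $C_t$ gives some $A$--$B$ edge.
\item Sliding a subpath of $C_t$ of length $\ell$ around the cycle changes the sides of its endpoints one step at a time. Since both parts are nonempty, some position has endpoints on opposite sides, unless the coloring of $C_t$ is ``$\ell$-periodic'' in a rigid way.
\item In the rigid cases, reroute through the chord. The chord splits $C_t$ into two cycles, giving new paths whose lengths differ by the position of the chord. This breaks the periodicity, except when everything is bipartite, where only the parity obstruction remains.
\end{itemize}
This case analysis is the main obstacle. I would handle it by first treating the cycle alone, where periodicity is the only obstruction, and then using the two chord-cycles to kill each periodic pattern.

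\emph{Closing the cycle.} Set $\ell=2k+1-2(i-j)$. Since $1\le i-j\le i\le k$, $\ell$ is odd with $1\le \ell\le 2k-1\le t-1$. By the lemma there is an $A$--$B$ path $P$ of length $\ell$ in $\Theta\subseteq G[V_i]$. Joining its endpoints $a\in A$ and $b\in B$ to $y$ along the two tree paths gives a cycle of length $\ell+2(i-j)=2k+1$. It is genuinely a cycle because $P\subseteq V_i$ while the interior tree vertices lie at levels $<i$. This contradicts the assumption that $G$ is $C_{2k+1}$-free.
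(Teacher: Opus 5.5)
The paper gives no proof of this lemma; it is quoted from F\"{u}redi and \"{O}zkahya. Your argument is the standard proof from that source, and it is correct: take the deepest common ancestor $y$ of $V(\Theta)$, split $V(\Theta)$ into $(A,B)$ by the branches below $y$, and close an $A$--$B$ path of odd length $2k+1-2(i-j)$ with the two tree paths of length $i-j$.

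Your key combinatorial lemma is the classical Bondy--Simonovits lemma for a cycle with a chord, so you may simply cite it. If you prove it yourself, your sketch as written is not enough. Paths inside $C_t$ and inside the two chord-cycles do not suffice: take $t=18$, chord $v_0v_4$, $\ell=9$ and $A=\{v_s: 3\mid s\}$, and no such path of length $9$ joins $A$ to $B$. You must also use paths that pass through the chord into both arcs, such as $v_{12}\cdots v_{17}v_0v_4v_3v_2$. These give two reflections of the $\gcd(\ell,t)$-periodic colouring that compose to a shift by $2$. That finishes the odd case, because $\gcd(\ell,t)$ is then odd.
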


At the end of this section, we add the proof of Proposition  \ref{prop-1}.

\noindent{\bf Proof of Proposition  \ref{prop-1}:}
It is clear that $\nu_n(F,G)\leq ex_F(n,G)$. It suffices to  show $\nu_n(F,G)+o(n^2)\geq ex_F(n,G)$.
If $H$ is an $n$-vertex rainbow $G$-free graph consisting of $ex_F(n,G)$ monochromatic $F$-copies, then by Lemma \ref{lem-star} and the hypergraph removal lemma, we may delete $o(n^r)$ edges together with the corresponding monochromatic $F$-copies to obtain a $G$-free graph that consists of $ex_F(n,G)-o(n^r)$ edge-disjoint copies of $F$. This yields $ex_F(n,G)\le \nu_n(F,G)+o(n^r)$.

\section{Proof of Theorem \ref{thm-4}}

\noindent{\bf Proof of Theorem \ref{thm-4}(1):}
 Let $v(F)=k$. It suffices to show that $ex_F(n,G)\leq k^k \cdot ex(n,G)$.
Suppose, to the contrary, that $ex_F(n,G)>k^k \cdot ex(n,G)$.
Let $H$ be an $n$-vertex rainbow $G$-free graph consisting of $ex_F(n,G)$ monochromatic $F$-copies.
By Lemma \ref{lem-random}, 
we can partition $V(H)$ into $V_1,V_2,\ldots,V_k$ so that there are at least $t=ex(n,G)+1$ monochromatic $F$-copies, say $F^1,F^2,\ldots,F^t$, satisfying two statements in Lemma \ref{lem-random}.
Let $i,j \in [k]$ be two indices  for which $e(V_i,V_j)\neq \emptyset$ and $|V_i \cup V_j|$ is maximized. Then $|V_i \cup V_j| \geq \delta(F)n/e(F)$.
Note that $H'=H[V_i\cup V_j]$ is a rainbow graph and 
$$e(H')\geq t>ex(n,G)\geq ex(|V_i\cup V_j|,G).$$ 
Hence, $H'$ contains a rainbow copy of $G$, contradicting the fact that $H$ is rainbow $G$-free.
Specifically, if $G=C_{2\ell}$, then $ex(n,G)=O(n^{1+1/\ell})$ (see \cite{ex-c2k}), and hence $ex_F(n,G)=O(n^{1+1/\ell})$.
\hfill$\square$ \medskip

Next, we prove Theorem \ref{thm-4}(2). The proof strategy is as follows. We first extract a high-minimum-degree rainbow subgraph, then analyze a lexicographically maximal layered rainbow tree, and finally count monochromatic cycles intersecting each level.

\smallskip

\noindent{\bf Proof of Theorem \ref{thm-4}(2):}
 Let $G$ be an $n$-vertex rainbow $C_{2\ell+1}$-free graph consisting of $ex_{C_{2k+1}}(n,C_{2\ell+1})$  monochromatic $C_{2k+1}$-copies.
Let $\delta$ denote the average number of monochromatic $C_{2k+1}$-copies containing a vertex. 
Then
\begin{align}\label{in-delta}
e(G)=(2k+1)ex_{C_{2k+1}}(n,C_{2\ell+1})=(2k+1)\delta n.
\end{align}
Delete all vertices $v$ with $d_G(v)<2\delta$ (i.e., vertices incident to fewer than $\delta$ monochromatic $C_{2k+1}$-copies), together with all monochromatic $C_{2k+1}$-copies incident to $v$.
In the resulting graph, if there exist vertices $w$ of degree less than $2\delta$, 
delete these vertices as well as all monochromatic $C_{2k+1}$-copies containing any of them.
Repeating this process, we eventually obtain a graph $G^*$.
Since $e(G^*)>e(G)-(2k+1)\delta n=0$, $G^*$ is a nonempty graph and has minimum degree at least $2\delta$.

For two integer sequences $\alpha=(a_1,a_2,\ldots,a_k)$ and $\beta=(b_1,b_2,\ldots,b_s)$, we say that $\alpha$ is lexicographically larger than $\beta$ if there exists $i\in[k]$ such that $a_i>b_i$ and $a_j=b_j$ for all $j<i$, or if $k>s$ and $a_i=b_i$ for each $i\in[s]$.
Choose a vertex $x$ in $G^*$ and let $T_x$ be a rainbow tree rooted at $x$ of height $\ell'\leq \ell$ (i.e., the maximum distance from $x$ to any vertex in $T_x$ is at most $\ell'$).
Then $V(T_x)$ can be partitioned into $\ell+1$ sets 
$\{x\},V_1,V_2,\ldots,V_{\ell'}$, where $V_i$ denotes the set of vertices at distance $i$ from $x$ in $T_x$.
Define $\gamma(T_x)=(|V_1|,|V_2|,\ldots,|V_{\ell'}|)$.
Among all rainbow trees in $G^*$ rooted at $x$ of height at most $\ell$, select one, denoted $T$, that maximizes $\gamma(T)$ with respect to the lexicographic order.
Assume that the height of $T$ is $h$.

Let $U_i$ denote the set of vertices at distance $i$ from $x$ in the rainbow tree $T$, and let $\mathcal{C}_{all}$ denote the set of monochromatic $C_{2k+1}$-copies that are incident to some vertex in  $\bigcup_{i\in[0,h]}U_i$.
We partition $\mathcal{C}_{all}$ into two subsets
$$\mathcal{T}=\{C\in \mathcal{C}_{all}:E(C)\cap E(T)\neq \emptyset\}\mbox{ and }\mathcal{C}=\{C\in \mathcal{C}_{all}:E(C)\cap E(T)= \emptyset\}.$$
Define 
$$
\mathcal{C}^*= \left\{
\begin{array}{ll}
\{C\in\mathcal{C}:V(C)\cap U_i\neq \emptyset\mbox{ for some }i\in[0,\ell-k]\}, &  h= \ell,\\
\{C\in\mathcal{C}:V(C)\cap U_i\neq \emptyset\mbox{ for some }i\in[0,h]\}, & h<\ell.\\
\end{array}
\right.
$$
“The following claims will be used repeatedly in the proof.

\begin{claim}\label{clm-odd-odd}
For each $C\in \mathcal{C}^*$, 
\begin{itemize}
    \item [(1).] every edge in $E(C)$ lies either in some $U_i$ or between consecutive parts $U_i$ and $U_{i+1}$, and 
    \item [(2).] $V(C)\subseteq V(T)$.
\end{itemize}
\end{claim}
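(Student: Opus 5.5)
The plan is to exploit the lexicographic maximality of $\gamma(T)$ through a single exchange step, and then propagate it around the cycle $C$.

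\emph{Exchange step.} Fix $C\in\mathcal{C}^*$. Since $C$ is monochromatic and $E(C)\cap E(T)=\emptyset$, the color of $C$ is used on no edge of $T$, because distinct monochromatic copies carry distinct colors. Hence adding any one edge of $C$ to $T$, possibly after deleting one tree edge, keeps the tree rainbow.

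\textbf{Key observation.} Let $uv\in E(C)$ with $u\in U_j$ and $j<\ell$. Then $v\in V(T)$ and $v\in U_m$ for some $m\le j+1$.
\begin{itemize}
\item Suppose $v\notin V(T)$. Attach $v$ to $u$. This leaves $|U_1|,\dots,|U_j|$ unchanged and enlarges level $j+1$. If $j=h<\ell$, it creates a new level of index at most $\ell$. Either way $\gamma$ increases lexicographically, and the height stays at most $\ell$.
\item Suppose $v\in U_m$ with $m\ge j+2$. Delete the edge from $v$ to its parent and add $uv$. Then $v$ and its subtree move up to levels $\ge j+1$. The levels $1,\dots,j$ are untouched and $|U_{j+1}|$ strictly increases, so $\gamma$ again increases. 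The height does not grow.
\end{itemize}
Both cases contradict the choice of $T$.

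\emph{Case $h<\ell$.} Every vertex of $T$ has level $<\ell$, so the observation applies at every vertex of $T$. Starting from a vertex of $C$ in $V(T)$, which exists by the definition of $\mathcal{C}^*$, walk along $C$. By induction every vertex of $C$ lies in $T$, giving (2). The observation applied at both endpoints of each edge shows that their levels differ by at most $1$, giving (1).

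\emph{Case $h=\ell$.} Here the observation only applies at levels $<\ell$, so we need the slack $i\le\ell-k$.
\begin{itemize}
\item Let $w\in V(C)\cap U_i$ with $i\le \ell-k$. Since $C=C_{2k+1}$, every vertex of $C$ is at distance at most $k$ from $w$ along $C$.
\item By induction on the distance $d\le k$, each vertex at distance $d$ lies in $T$ at level $\le i+d\le\ell$. The induction step uses the observation at the predecessor, whose level is $\le i+d-1\le\ell-1$. This gives (2).
\item For (1), take any edge $ab$ of $C$ with levels $\lambda(a)\le\lambda(b)$. If $\lambda(a)<\ell$, the observation at $a$ gives $\lambda(b)\le\lambda(a)+1$. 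If $\lambda(a)=\ell$, then $\lambda(b)=\ell$ too. In both cases the edge lies inside one $U_i$ or between consecutive levels.
\end{itemize}

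\emph{Main obstacle.} The delicate point is the re-hanging move when $v$ is already in $T$ at a deeper level. We must check three things:
\begin{itemize}
\item the result is still a tree, which holds because $u$ is not a descendant of $v$, as $u$ has smaller level;
\item the tree is still rainbow;
\item only levels strictly beyond $j+1$ lose vertices, so $\gamma$ truly increases.
\end{itemize}
For the boundary edge in the $h=\ell$ case (the edge opposite $w$, both of whose endpoints are at distance $k$), the level bound $i+k\le\ell$ is exactly what keeps every endpoint in $T$.
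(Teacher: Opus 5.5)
Your proof is correct and follows essentially the same route as the paper. Both use the same three ingredients:
\begin{itemize}
\item the re-hanging exchange $T+e-f$, which rules out an edge of $C$ joining tree levels that differ by at least $2$;
\item attaching an outside vertex of $C$ to a tree vertex of level $<\ell$, which rules out $V(C)\not\subseteq V(T)$;
\item the bound $dist_C\le k$ on $C_{2k+1}$ together with the slack $i\le \ell-k$, which keeps the height at most $\ell$ when $h=\ell$.
\end{itemize}
Your packaging of both exchanges into a single per-edge observation, with the explicit level bound $\le i+d$ in the induction, is a slightly cleaner rendering of the paper's $Z_p$ argument. The paper writes $Z_{p-1}\subseteq U_{\alpha+p-1}$, although only the upper bound on the level is actually justified and needed.
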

\begin{proof}
We show the first statement. Suppose, to the contrary, that
there exists an edge $e=ab$ in $E(C)$ such that $a\in U_i$ and $b\in U_j$ for some $i,j\in[\ell]$ with $j-i\geq 2$.
Then $T\cup e$ contains a unique cycle $Q$, and $Q$ contains an edge $f$ that lies between $U_j$ and $U_{j-1}$.
Let $T'=T+e-f$.
Since the colors of $e$ and every edge in $T$ are distinct, $T'$ is a rainbow tree rooted at $x$ of height at most $\ell$.
However, since $j-i\geq 2$, we have $\gamma(T')>\gamma(T)$, a contradiction.

We now prove the second statement. Suppose, for contradiction, that $V(C)\setminus V(T)\neq\emptyset$.  
Let $\alpha$ be the smallest integer such that $V(C)\cap U_{\alpha}\neq\emptyset$.  
Define  
$$
Z_p := \{\, w : dist_C\bigl(w,\,V(C)\cap U_{\alpha}\bigr)=p \,\}.
$$  
Since $V(C)\setminus V(T)\neq\emptyset$, there exists a smallest integer $p\ge 1$ such that $Z_p\setminus V(T)\neq\emptyset$; choose $w'\in Z_p\setminus V(T)$.  
Then there is an edge $e=w'w''$ in $C$ with $w''\in Z_{p-1}$.  
By the first statement, we have $Z_{p-1}\subseteq U_{\alpha+p-1}$.  
Consequently, $T'' = T\cup\{e\}$ is a rainbow tree rooted at $x$ whose height is at most $\alpha+p$.  
It is easy to verify that $\gamma(T'') > \gamma(T)$.  

To obtain a contradiction, it suffices to show that $\alpha+p\le \ell$.  
Indeed, if $h<\ell$, then $\alpha+p-1\le h$, and hence $\alpha+p\le \ell$.  
If $h=\ell$, then since $C\in\mathcal{C}^*$ we have $\alpha\in[\ell-k]$, and the definition of $Z_p$ gives $p\le k$; hence again $\alpha+p\le\ell$.  
Thus the contradiction is established.
\end{proof}

\noindent{\bf Case 1:}  $h<\ell$.

By Claim \ref{clm-odd-odd}, we have that for each $C\in\mathcal{C}$, there exists an edge of $C$ that lies entirely in some $U_i$ with $i\in[h]$.  
Hence $|\mathcal{C}|\le \sum_{i\in[h]} e(G^*[U_i])$.  
Since $G^*$ is rainbow $C_{2\ell+1}$-free and $h<\ell$, Lemma~\ref{Ozkahya} implies that each $G^*[U_i]$ is $\theta_{\ge 2\ell}$-free.  
Since 
\begin{align}\label{ineq-turan-theta}
ex(N,\theta_{\ge 2\ell})\le 2(\ell-1)N
\end{align} 
(which is used in \cite{Furedi}), we obtain $e(G^*[U_i])\le 2(\ell-1)|U_i|$.  
Consequently,
$$
|\mathcal{C}|\le 2(\ell-1)\sum\nolimits_{i\in[h]}|U_i|
=2(\ell-1)\Bigl|\bigcup\nolimits_{i\in[h]}U_i\Bigr|.
$$

Note that $\mathcal{C}_{\text{all}} = \mathcal{T}\cup\mathcal{C}$ and $\mathcal{T}=|\bigcup_{i\in[h]}U_i|$. We have that
$$
\delta\Bigl|\bigcup\nolimits_{i\in[h]}U_i\Bigr|
\le (2k+1)|\mathcal{C}_{all}|
\le (2k+1)(|\mathcal{T}|+|\mathcal{C}|)
\le (2k+1)(2\ell-1)\Bigl|\bigcup\nolimits_{i\in[h]}U_i\Bigr|,
$$
which yields $\delta\le (2k+1)(2\ell-1)$.  Finally,
$$
ex_{C_{2k+1}}(n,C_{2\ell+1}) = \frac{e(G)}{2k+1} = \delta n
\le (2k+1)(2\ell-1)n
< 8(k+1)^2\ell\, n^{1+1/\lceil\ell/k\rceil}.
$$

\noindent{\bf Case 2:}  $h=\ell$.

The following special partitions of $\mathcal{T}$ and $\mathcal{C}^*$ are necessary.
Partition $\mathcal{T}$ into $\ell$ parts: for $i\in [\ell]$, let 
$$\mathcal{T}_i=\{C\in \mathcal{T}:  E(C)\cap E(T)\mbox{ is an edge between }U_{i-1}\mbox{ and }U_i\}.$$
Clearly, for each $i\in [\ell]$,
\begin{align}\label{ineq-odd-odd-1}
|\mathcal{T}_i|=|U_i|.
\end{align}
For each $C\in \mathcal{C}^*$, let $$top_C=\min\{i:U_i\cap V(C)\neq \emptyset\}.$$
Observe that, by the maximality of $\gamma(T)$, there are no monochromatic  $C_{2k+1}$-copies in $\mathcal{C}^*$ that intersect $x = U_0$.
Hence, we can further partition $\mathcal{C}^*$ into $\ell-k$ subsets: for each $i\in [\ell-k]$, let 
$$\mathcal{C}_i=\{C\in \mathcal{C}^*:top_C=i\}.$$

\begin{claim}\label{clm-odd-odd-1}
    Assume that $C$ is a monochromatic $C_{2k+1}$-copy containing some vertex in $U_i$.
    If $C\in\mathcal{T}$, then $C\in \bigcup_{j\in [0,i+k]}\mathcal{T}_j$; if $C\in\mathcal{C}^*$, then $C\in \bigcup_{j\in [\max\{0,i-k\},i]}\mathcal{C}_j$.
\end{claim}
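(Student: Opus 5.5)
Both parts come from one distance fact: any two vertices of the $(2k+1)$-cycle $C$ are at distance at most $k$ along $C$. We combine this with the layer structure of $T$ given by Claim~\ref{clm-odd-odd}. Fix a vertex $v\in V(C)\cap U_i$.

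\emph{The case $C\in\mathcal{T}$.} By definition of $\mathcal{T}$ and of the partition $\mathcal{T}_j$, the cycle $C$ contains exactly one tree edge $f=ab$ with $a\in U_{j-1}$ and $b\in U_j$, and then $C\in\mathcal{T}_j$. We must show $j\le i+k$. Consider the path $P$ in $C$ from $v$ to $b$ of length at most $k$. We first check that $P$ cannot contain $f$. If it did, we could replace $P$ by the walk from $v$ that stops at the earlier endpoint of $f$, at the cost of one index; this only helps, since the aim is to bound $j$ from above.

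Next we show that the layer index increases by at most one along each edge of $P$. Suppose an edge $e=pq$ of $C$, with $e\ne f$, joins $U_s$ to $U_{s'}$ with $s'\ge s+2$. Since $C$ is monochromatic and contains the tree edge $f$, its colour is one already used in $T$, so the exchange argument of Claim~\ref{clm-odd-odd}(1) must be adapted. Let $Q$ be the cycle in $T+e$. The plan is to take $f'$ to be the tree edge of $Q$ entering $U_{s'}$ and to remove both $f$ and $f'$. This does not obviously give a better tree, which is where the difficulty lies.

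A cleaner route avoids the exchange entirely. A vertex of $C$ that lies outside $V(T)$ could be attached to $T$ by the edge of $C$ reaching it, provided the colour of $C$ is freed by deleting $f$. Doing so would replace $b$ in $U_j$ by a new vertex at a smaller depth, and would make $\gamma$ lexicographically larger if the new vertex lies in a layer of index less than $j$.

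I therefore expect the intended argument to be the following. By the maximality of $\gamma(T)$, swapping $f$ for another edge of $C$ never lets a vertex move to a shallower layer. Hence every vertex of $C$, in particular $v$, has depth at least $j-k$ measured along $C$ from $a$ or $b$: an edge of $C$ at $b$ could replace $f$ and give $b$ a smaller depth, and iterating this along $C$ bounds $j-1-i$ by the $C$-distance, which is at most $k$. This gives $j\le i+k$ (up to $\pm1$ bookkeeping), that is, $C\in\bigcup_{j\in[0,i+k]}\mathcal{T}_j$.

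\emph{The case $C\in\mathcal{C}^*$.} Claim~\ref{clm-odd-odd} applies directly: $V(C)\subseteq V(T)$, and every edge of $C$ stays within one layer or joins consecutive layers. Hence layer indices change by at most one per edge along $C$. Let $top_C=t$ and pick $w\in V(C)\cap U_t$. Then $t\le i$ by minimality of $t$. Moreover $dist_C(w,v)\le k$, so $i-t\le k$. Together with $t\ge0$ this gives $t\in[\max\{0,i-k\},i]$, i.e.\ $C\in\bigcup_{j\in[\max\{0,i-k\},i]}\mathcal{C}_j$.

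\emph{Main obstacle.} The difficult step is the $\mathcal{T}$ case. There, edges of $C$ may a priori jump across several layers, because $C$ shares its colour with a tree edge, so Claim~\ref{clm-odd-odd}(1) does not apply verbatim. I would first try the swap that exchanges $f$ for the edge of $C$ that most reduces depth, and check that this yields a rainbow tree of height at most $\ell$ with lexicographically larger $\gamma$.
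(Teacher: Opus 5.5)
You handle $C\in\mathcal{C}^*$ correctly; this is what the paper means by ``follows immediately from Claim~\ref{clm-odd-odd}(1)''. The case $C\in\mathcal{T}$, however, is not proved.

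\begin{itemize}
\item You abandon the two-edge exchange yourself.
\item The fallback (swap $f$ for the other edge $g=bc$ of $C$ at $b$, then ``iterate along $C$'') does not work as stated. If $c$ lies in the subtree of $b$ or outside $V(T)$, then $T-f+g$ is not a tree. If $c$ has depth at least $j-1$, nothing improves. There is no invariant that makes an iteration from the deep end $b$ go through.
\item You aim at a stronger statement than needed: that every edge of $P$ raises the layer index by at most one. This need not hold for edges of $C$ at depth at least $j$. There the colour of $C$ cannot be freed without losing $b$, which costs a coordinate of $\gamma$ earlier than the one you would gain.
\end{itemize}

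The missing idea, used in the paper, is truncation rather than exchange. Keep only $T[U_0\cup\cdots\cup U_{r+1}]$ for a suitable $r\le j-2$. This discards $f$, because $b\in U_j$, so the colour of $C$ becomes free. Attaching one new vertex at depth $r+1$ then makes $\gamma$ lexicographically larger, whatever is lost in layers of index at least $r+2$, and the height bound is automatic.

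To find $r$, walk along $C$ from the shallow end: $v=w_0,w_1,\dots,w_m=b$ with $m\le k$. Suppose every step stayed in $V(T)$ and raised the depth by at most one. Then $b$ would have depth at most $i+k<j$, a contradiction. So there is a first step $w_sw_{s+1}$ with $w_s\in U_r$ such that either $w_{s+1}\notin V(T)$ or $w_{s+1}$ lies in a layer of index at least $r+2$. All earlier steps are regular, so $r\le i+s\le i+k-1\le j-2$, and in particular $w_sw_{s+1}\ne f$. Hence $T[U_0\cup\cdots\cup U_{r+1}]+w_sw_{s+1}$ is a rainbow tree rooted at $x$, of height $r+1\le\ell$, with $\gamma$ larger than $\gamma(T)$. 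This is a contradiction.

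Your ``cleaner route'' paragraph was close: delete $f$ and attach a vertex in a layer of index less than $j$. But it only covered vertices outside $V(T)$. It missed that a layer-jumping edge inside $T$ is handled the same way once the deep layers are cut off. It also never said where on $C$ the offending edge is found, namely the first irregular step from $v$.

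The paper's write-up mentions only the layer-jumping edge. The step that leaves $V(T)$ must also be handled, since Claim~\ref{clm-odd-odd}(2) does not apply to members of $\mathcal{T}$, and the same truncation deals with it.
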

\begin{proof}
    If $C\in\mathcal{C}^*$, then the result follows immediately from statement (1) in Claim \ref{clm-odd-odd}.
    Hence, assume that $C\in\mathcal{T}$.
    Without loss of generality, let $p$ be the integer such that $C \in \mathcal{T}_p$.
    By the definition of $\mathcal{T}_p$, there exists an edge of $C$, say $e=ab$, that also belongs to $T$, with $a\in U_{p-1}$ and $b\in U_p$.
    
    Suppose for the contrary that  $p\notin [0,i+k]$.
    Then $p-i>k$. 
    Since $C$ contains a vertex in $U_i$ (say $z$) and $dist_C(z,b)\leq k$, there exists an integer $q\geq i$ and an integer $\alpha\geq 2$ such that $q+\alpha\leq p$ and $C$ contains an edge $f=a'b'$ with $a'\in U_q$ and $b'\in U_{q+\alpha}$. 
    Clearly, 
    $$T'=T\left[\bigcup\nolimits_{i\in[0, q+1]}U_j\right]\cup\{f\}$$ 
    is a new rainbow tree rooted at $x$ of height at most $\ell$, and $\gamma(T')>\gamma(T)$, a contradiction.
\end{proof}

Since each $C\in \mathcal{C}^*$ is an odd cycle, by statement (1) in Claim \ref{clm-odd-odd}, there exists an $i\in[\ell]$ and an edge of $C$, denoted by $e_C$, such that $e_C\in G^*[U_i]$.
Let 
$$R=\{e_C:C\in \mathcal{C}^*\}\mbox{ and }R_i=R\cap G^*[U_i].$$
Clearly, each $E(T)\cup R_i$ is rainbow.
Since $G^*$ is rainbow $C_{2\ell+1}$-free, by Lemma \ref{Ozkahya}, $G^*[R_i]$ is $\theta_{\geq 2\ell}$-free.
By Ineq. (\ref{ineq-turan-theta}),
$$|R_i|=e(G^*[R_i])\leq 2(\ell-1)|U_i|.$$
Let $I_p=[\max\{0,p-k\},p+k]$.
It is obvious that for each $i\in [\ell-k]$, 
$I_p\subseteq [0,\ell]$.

\begin{claim}\label{clm-odd-odd-2}
For each $ i\leq \ell-k$, 
$\sum_{j\in[\max\{0,i-k\},i]}|\mathcal{C}_j|\leq 2(\ell-1)\sum_{j\in I_i}|U_j|$.
\end{claim}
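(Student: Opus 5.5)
We bound the left-hand side by charging each cycle $C\in\mathcal{C}_j$ to its designated edge $e_C\in R$ and then using the bound $|R_m|\le 2(\ell-1)|U_m|$ established just before the claim.

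\emph{Injectivity of the charging.} The map $C\mapsto e_C$ is injective. Distinct monochromatic $C_{2k+1}$-copies are edge-disjoint, so they cannot share the chosen edge. Hence
$$\sum\nolimits_{j\in[\max\{0,i-k\},i]}|\mathcal{C}_j|\le \#\{e_C: C\in \mathcal{C}_j \mbox{ for some } j\in[\max\{0,i-k\},i]\}.$$

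\emph{Locating the charged edges.} Fix $j\in[\max\{0,i-k\},i]$ and $C\in\mathcal{C}_j$. Then $top_C=j$, so $C$ meets $U_j$ and no earlier level. By statement (1) of Claim \ref{clm-odd-odd}, every edge of $C$ lies inside some $U_m$ or between consecutive levels. Since $C$ has $2k+1$ vertices, every vertex of $C$ is within $C$-distance $k$ of a vertex in $U_j$. Because each edge moves up or down at most one level, every vertex of $C$ lies in $U_m$ with $j\le m\le j+k$. In particular $e_C\in R_m$ for some $m\in[j,j+k]$. As $j$ ranges over $[\max\{0,i-k\},i]$, this puts $m$ in $[\max\{0,i-k\},i+k]=I_i$. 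Since $i\le \ell-k$, we have $I_i\subseteq[0,\ell]$, so all these levels exist in $T$.

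\emph{Summing.} Combining the two steps gives
$$\sum\nolimits_{j\in[\max\{0,i-k\},i]}|\mathcal{C}_j|\le \sum\nolimits_{m\in I_i}|R_m|\le 2(\ell-1)\sum\nolimits_{m\in I_i}|U_m|.$$
The second inequality is the bound $|R_m|\le 2(\ell-1)|U_m|$. That bound came from Lemma \ref{Ozkahya} applied to the rainbow set $E(T)\cup R_m$, which yields $\theta_{\ge 2\ell}$-freeness, together with Ineq.~(\ref{ineq-turan-theta}).

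\emph{Main obstacle.} The delicate point is that the bound $|R_m|\le 2(\ell-1)|U_m|$ needs $E(T)\cup R_m$ to be rainbow. Lemma \ref{Ozkahya} is stated for a $C_{2k+1}$-free host graph; here it must be applied with $2\ell+1$ in place of $2k+1$ to the rainbow graph $T\cup R_m$, using that $G^*$ has no rainbow $C_{2\ell+1}$. Rainbowness of $T\cup R_m$ holds because each $C\in\mathcal{C}^*\subseteq\mathcal{C}$ shares no edge with $T$, and distinct $C$ carry distinct colours. The levels $m$ used satisfy $m\le\ell$, the level range of the $\theta_{\ge 2\ell}$-freeness statement. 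Given that, the claim reduces to the distance-bookkeeping of the second step.
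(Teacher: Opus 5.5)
Your proposal is correct and follows the paper's proof. Claim~\ref{clm-odd-odd}(1) places every vertex of $C\in\mathcal{C}_j$ in levels $U_j,\dots,U_{j+k}$, so $e_C\in R_m$ for some $m\in[j,j+k]\subseteq I_i$, and the bound $|R_m|\le 2(\ell-1)|U_m|$ then finishes the argument. The only addition is that you make explicit two facts the paper leaves implicit: the map $C\mapsto e_C$ is injective because distinct monochromatic copies are edge-disjoint, and $E(T)\cup R_m$ is rainbow.
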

\begin{proof}
  Given an $i\in[\ell-k]$, statement (1) in Claim \ref{clm-odd-odd} implies that for each $C\in \mathcal{C}_i$, $V(C)\subseteq \bigcup_{i\leq j\leq i+k}U_j$.
  Hence, $e_C$ lies in some $U_j$, $i\leq j\leq i+k$.
  Consequently, 
  $$\sum_{j\in[\max\{0,i-k\},i]}|\mathcal{C}_j|
  \leq \sum_{j\in I_i}|R_j|\leq 2(\ell-1)\sum_{j\in I_i}|U_j|.$$
\end{proof}

Note that for each monochromatic $C_{2k+1}$-copy $C$, if $V(C)\cap U_i\neq \emptyset$, then $C$ {\em contributes} at most $$\omega_i(C)\leq 2e(C)=4k+2$$ 
to the computation of $\sum_{v\in U_i}d_{G^*}(v)$.
By Claim \ref{clm-odd-odd-1}, $\mathcal{T}$ contributes 
\begin{align}\label{ineq-contribute-1}
\sum_{j\in [0,i+k]}\sum_{C\in\mathcal{T}_j}\omega_i(C)\leq (4k+2)\sum_{j\in [0,i+k]}|\mathcal{T}_j|
\overset{\text{by Ineq. (\ref{ineq-odd-odd-1})}}{=} (4k+2)\sum_{j\in [0,i+k]}|U_j|
\end{align}
to $\sum_{v\in U_i}d_{G^*}(v)$, and $\mathcal{C}^*$ contributes
\begin{align}\label{ineq-contribute-2}
\sum_{j\in [\max\{0,i-k\},i]}\sum_{C\in\mathcal{C}_j}\omega_i(C)
\leq (4k+2)\sum_{j\in [\max\{0,i-k\},i]}|\mathcal{C}_j|\leq 2(\ell-1)(4k+2)\sum_{j\in I_i}|U_j|
\end{align}
to $\sum_{v\in U_i}d_{G^*}(v)$ (the last inequality holds due to Claim \ref{clm-odd-odd-2}).
Since each vertex in $U_i$, $i\in[0,\ell-k]$,  is incident only to monochromatic $C_{2k+1}$-copies belonging to $\mathcal{T}$ or $\mathcal{C}^*$, it follows that
\begin{align}\label{ineq-sum}
\sum_{v\in U_i}d_{G^*}(v)
&\leq (4k+2)\sum_{j\in [0,i+k]}|U_j|+ (4k+2)(2\ell-2)\sum_{j\in I_i}|U_j|.
\end{align}
Since each $i$ is contained  in at most $2k+1$ intervals $I_j$, 
it follows that for each $t\in[\ell-k]$, 
\begin{align}\label{ineq-sum-1}
\sum_{i\in[0,t]}\sum_{j\in I_i}|U_j|\leq (2k+1)\sum_{j\in[0,t+k]}|U_j|.
\end{align}
Similarly, 
\begin{align}\label{ineq-sum-2}
\sum_{i\in[0,t]}\sum_{j\in [0,i+k]}|U_j|\leq (t+1) \sum_{j\in[0,t+k]}|U_j|\leq (\ell-k+1) \sum_{j\in[0, t+k]}|U_j|.
\end{align}
Consequently, by Ineqs. (\ref{ineq-contribute-1}) to (\ref{ineq-sum-2}),
\begin{align*}
\sum_{i\in[0,t]}\sum_{v\in U_i}d_{G^*}(v)  
&\leq (4k+2)\sum_{i\in[0,t]}\sum_{j\in [0,i+k]}|U_j|+(4k+2)(2\ell-2)\sum_{i\in[0,t]}\sum_{j\in I_i}|U_j|\\
&\leq \left[(4k+2)(\ell-k+1)+2(2k+1)^2(2\ell-2)\right]\sum_{i\in[0,t+k]}|U_i|\\
&<16(k+1)^2\ell\sum_{i\in[0,t+k]}|U_i|.
\end{align*}
Recall that the minimum degree of $G^*$ is at least $2\delta$.
Hence,
$$2\delta\sum_{i\in[0,t]}|U_i|\leq \sum_{i\in[0,t]}\sum_{v\in U_i}d_{G^*}(v)< 16(k+1)^2\ell\sum_{i\in[0, t+k]}|U_i|.$$
Let $W_i=\bigcup_{j\in[0,i]}U_j$ and $\lambda =[8(k+1)^2\ell]^{-1}$.
Then 
$$\delta|W_t|=2\delta\sum_{i\in[0,t]}|U_i|\leq 8(k+1)^2\ell\sum_{i\in[0,t+k]}|U_i|=8(k+1)^2\ell |W_{t+k}|=|W_{t+k}|/\lambda.$$
By the maximality of $\gamma(T)$, $|W_1|=|U_1|+1>\delta$.
Let $\ell=pk+q$, where $q\in[0,k-1]$.
Then 
$$n\geq |W_{\ell}|\geq \lambda\delta |W_{\ell-k}|\geq \ldots\geq (\lambda\delta)^p|W_q|.$$
If $q=0$, then $n\geq (\lambda\delta)^p$; if $q>0$, then $n\geq (\lambda\delta)^p|W_q|\geq (\lambda\delta)^p|W_1|\geq \lambda^p\delta^{p+1}$.
Therefore, 
$$n\geq \lambda^{\lfloor \ell/k\rfloor}\delta^{\lceil\ell/k\rceil}.$$
Consequently, 
$$ex_{C_{2k+1}}(n,C_{2\ell+1})\leq \delta n\leq \frac{n^{1+1/\lceil\ell/k\rceil}}{\lambda}=[8(k+1)^2\ell]n^{1+1/\lceil\ell/k\rceil},$$
the proof thus completes.
\hfill$\square$ \medskip

\section{Proofs of Theorems \ref{thm-1} and \ref{thm-2}}

In this section, we will prove Theorems \ref{thm-1} and \ref{thm-2}.

\noindent\textbf{Proof of Theorem \ref{thm-1}:}
Since there is no homomorphism from $G$ to $F$, we have $\chi(G)\geq 3$.
Therefore, $3\leq \chi(G)\leq \chi(F)$.

Let $v(F)=f$ and $V(F)=\{v_1,v_2,\ldots,v_{f-2},u,v\}$.
Assume that $F' = F/\{u,v\}$ is obtained by shrinking $u$ and $v$ to a single vertex $v_0$.
Then $V(F') = \{v_1,v_2,\ldots,v_{f-2},v_0\}$.
If $u$ and $v$ have a common neighbor, we may assume it is $v_1$.

\begin{claim}\label{lem1-clm-1}
$e(F'-\{v_0,v_1\})\geq 1$.
\end{claim}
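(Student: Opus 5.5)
We argue by contradiction. Suppose $e(F'-\{v_0,v_1\})=0$, i.e., every edge of $F'$ is incident to $v_0$ or $v_1$. We show that $G$ then admits a homomorphism to $F'=F/\{u,v\}$, contradicting condition (B). (If $u,v$ have no common neighbor, $v_1$ is just one of the remaining vertices and the argument is the same.)

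Under this assumption, $\{v_0,v_1\}$ is a vertex cover of $F'$. Hence $F'$ is a subgraph of the graph on $V(F')$ in which $v_0$ and $v_1$ are each joined to every other vertex, possibly also to each other, and no other edges are present. We locate $G$ inside $F'$ according to the chromatic number of $F'$.

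First suppose $\chi(F')\geq 3$. A graph covered by two vertices has chromatic number at most $3$, and it is $3$-chromatic exactly when it contains a triangle. Such a triangle must use both $v_0$ and $v_1$ together with some $w\neq v_0,v_1$. Then $F'$ contains $K_3$, and $\chi(F')=3$. The vertex cover also shows $\chi(F)\le 3$: the vertices $v_2,\dots,v_{f-2}$ form an independent set in $F$, since any edge among them would survive in $F'-\{v_0,v_1\}$. A $3$-coloring of $F$ is then obtained by giving $u,v$ one color, which is valid because $uv\notin E(F)$, giving $v_1$ a second color, and giving all other vertices the third color. Combining this with $\chi(G)\leq\chi(F)$ gives $\chi(G)\le 3$, so $G$ admits a homomorphism to $K_3\subseteq F'$. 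This contradicts (B).

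Now suppose $\chi(F')\le 2$, so $F'$ is bipartite. Since contracting $u,v$ to $v_0$ only merges vertices, $F$ maps homomorphically to $F'$. Hence $\chi(F)\le 2$, and then $\chi(G)\le 2$. This contradicts $\chi(G)\ge 3$, which was derived at the start of the proof of Theorem~\ref{thm-1} from the nonexistence of a homomorphism $G\to F$. (Alternatively, if $F$ has an edge then $K_2\subseteq F$ and $G$ maps to $F$ directly.)

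The main obstacle is this bookkeeping. One must check that $\chi(F)\le 3$ really follows from the vertex-cover structure, and that the triangle $v_0v_1w$ exists exactly when needed. Condition (A), which says $u,v$ have at most one common neighbor, is not needed here; it is presumably used later in the proof of Theorem~\ref{thm-1}.
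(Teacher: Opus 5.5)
Your proof is correct, but it reaches the contradiction through $F'$, whereas the paper argues inside $F$.

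\textbf{The paper's route.} From the independent sets $\{v_2,\dots,v_{f-2}\}$, $\{u,v\}$, $\{v_1\}$ it gets $\chi(F)=\chi(G)=3$. It then uses that $v_1$ is the common neighbor of $u,v$ (unique by (A)), so every other $v_i$ is adjacent to at most one of $u,v$. From this it concludes that $3$-chromaticity forces a triangle $v_1v_iu$ or $v_1v_iv$ in $F$ itself. That contradicts the first half of (B): there is no homomorphism $G\to F$.

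\textbf{Your route.} You use that $\{v_0,v_1\}$ is a vertex cover of $F'$. This makes ``non-bipartite $\Leftrightarrow$ contains a triangle $v_0v_1w$'' immediate, and you contradict the second half of (B).

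\textbf{What each buys.} Your version needs neither (A) nor the choice of $v_1$ as a common neighbor, so your closing remark about (A) is accurate. It also replaces a compressed step in the paper with a one-line fact about graphs with a two-vertex cover. The paper asserts ``since $\chi(F)=3$, there exists an integer $i$ \dots'' without much justification. It also states that $v_1$ is the unique common neighbor, which is unjustified when $u,v$ have no common neighbor at all.

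In exchange, the paper only invokes the formally weaker hypothesis. Because $F\to F'$, ``no homomorphism $G\to F'$'' implies ``no homomorphism $G\to F$''. For that weaker hypothesis the choice of $v_1$ genuinely matters. Take $F=C_5=v_1\,u\,a'\,v\,a\,v_1$, so $a'$ is the common neighbor of $u,v$ but $v_1\neq a'$. Then $e(F'-\{v_0,v_1\})=0$ and $F$ is triangle-free. Your argument correctly excludes this configuration only because $F'$ contains the triangle $v_0v_1a$.

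\textbf{Minor streamlining.} Your case split can be collapsed. Since $F\to F'$, you have $\chi(F')\ge\chi(F)\ge\chi(G)\ge 3$, so the bipartite case never occurs. Likewise $\chi(F)\le\chi(F')\le 3$ gives your bound on $\chi(F)$ without the explicit $3$-coloring.
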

\begin{proof}
Suppose for contradiction that $e(F'-\{v_0,v_1\})=0$.
Then $\{v_2,v_3,\ldots,v_{f-2}\}$, $\{u,v\}$ and $\{v_1\}$ are three independent sets in $F$.
Together with $3\leq \chi(G)\leq \chi(F)$, we obtain $\chi(F)=\chi(G)=3$.
Moreover, $v_1$ is the unique common neighbor of $u$ and $v$.
Consequently, for each $i$ with $2\leq i\leq f-2$, $v_i$ is adjacent to at most one of $u$ and $v$.
Since $\chi(F)=3$, there exists an integer $i$ such that either $v_iv_1, v_iu\in E(F)$ or $v_iv_1, v_iv\in E(F)$, which implies that $F$ contains a triangle.
However, $\chi(G)=3$ implies the existence of a homomorphism from $G$ to $F$, a contradiction.
\end{proof}

For sufficiently large $n$ and an $n$-vertex set $V$, let $\mathcal{P}=\{V_0,V_1,V_2,\ldots,V_{f-2}\}$ be a partition of $V$ such that  
$$
|V_2|=|V_3|=\ldots=|V_{f-2}|=\left\lfloor\frac{n-2}{2\sqrt{2}+f-3}\right\rfloor
$$
and $-1\leq |V_0|-|V_1|\leq 1$.
Clearly,
$$
|V_0|,|V_1|\geq\left\lfloor\frac{\sqrt{2}(n-2)}{2\sqrt{2}+f-3}+1\right\rfloor\geq \frac{\sqrt{2}(n-2)}{2\sqrt{2}+f-3}.
$$
Let $H$ be an $n$-vertex graph on $V$ with edge set
$$
E(H)=\bigcup_{v_iv_j\in E(F')}\{xy : x\in V_i,\ y\in V_j\}.
$$
Since there is no homomorphism from $G$ to $F'$, the graph $H$ is $G$-free.
By Claim \ref{lem1-clm-1}, we can choose $V_p,V_q\in\mathcal{P}$ such that $v_pv_q\in E(F'-\{v_0,v_1\})$.
It is clear that
$$
e[V_p,V_q]=|V_p||V_q|=\left\lfloor\frac{n-2}{2\sqrt{2}+f-3}\right\rfloor^2 = \min\{e[V_i,V_j] : v_iv_j\in E(F')\}.
$$
Choose a positive real number
$$
\epsilon < \frac{3-2\sqrt{2}}{2\sqrt{2}-3+f} < \frac{1.8}{f-1.8}.
$$
Then $1/(2\sqrt{2}+f-3) > (1+\epsilon)/f$, and hence
$$
e[V_p,V_q] > \frac{(1+\epsilon)^2 n^2}{f^2}.
$$

Let $\mathcal{A}$ be the set of all copies $F_0$ of $F$ in $H$ such that $|V(F_0)\cap V_i|=1$ for each $i\in[f-2]$ and $|V(F_0)\cap V_0|=2$.
For each edge $e\in E(H)$, let $\mathcal{A}_e$ denote the set of copies of $F$ in $\mathcal{A}$ that contain $e$.
By symmetry, for any two edges $e,e'$ connecting the same parts in $\mathcal{P}$, we have $|\mathcal{A}_e| = |\mathcal{A}_{e'}|$.
Therefore, for each $v_iv_j\in E(F')$ and each $e\in E[V_i,V_j]$,
\begin{itemize}
\item if $\{i,j\}\neq\{0,1\}$ or $\{i,j\}=\{0,1\}$ but $v_1$ is not the common neighbor of $u$ and $v$, then each $F_0\in\mathcal{A}$ contains exactly one edge in $E[V_i,V_j]$, so $|\mathcal{A}_e| = |\mathcal{A}| / e[V_i,V_j]$;
\item if $\{i,j\}=\{0,1\}$ and $v_1$ is the common neighbor of $u$ and $v$, then each $F_0\in\mathcal{A}$ contains exactly two edges in $E[V_i,V_j]$, so $|\mathcal{A}_e| = 2|\mathcal{A}| / e[V_i,V_j]$.
\end{itemize}

For each $F_0\notin \mathcal{A}$, define $\psi^*(F_0)=0$; for each $F_0\in \mathcal{A}$, choose an edge $e^*\in E[V_p,V_q]$ (note that $\{p,q\}\neq\{0,1\}$) and define $\psi^*(F_0)=1/|\mathcal{A}_{e^*}|$, which implies $\psi^*(F_0)=e[V_p,V_q]/|\mathcal{A}|$.
Now observe that
\begin{itemize}
\item if $\{i,j\}\neq\{0,1\}$ or $\{i,j\}=\{0,1\}$ but $v_1$ is not the common neighbor of $u$ and $v$, then
$$
\sum_{F_0\in \mathcal{A}_e}\psi^*(F_0)=|\mathcal{A}_e|\cdot\frac{e[V_p,V_q]}{|\mathcal{A}|}=\frac{e[V_p,V_q]}{e[V_i,V_j]}\leq 1;
$$
\item if $\{i,j\}=\{0,1\}$ and $v_1$ is the common neighbor of $u$ and $v$, then
$$
\sum_{F_0\in \mathcal{A}_e}\psi^*(F_0)=|\mathcal{A}_e|\cdot\frac{e[V_p,V_q]}{|\mathcal{A}|}=\frac{2e[V_p,V_q]}{e[V_i,V_j]}\leq 2\left\lfloor\frac{n-2}{2\sqrt{2}+f-3}\right\rfloor^{2}\cdot\left(\frac{\sqrt{2}(n-2)}{2\sqrt{2}+f-3}\right)^{-2}\leq 1.
$$
\end{itemize}
Thus $\psi^*$ is a fractional $F$-packing of $H$, and
$$
\nu^*_H(F)\geq \sum_{F_0\in\mathcal{A}}\psi^*(F_0)=\sum_{e\in E[V_p,V_q]}\sum_{F_0\in \mathcal{A}_e} \frac{e[V_p,V_q]}{|\mathcal{A}|}=e[V_p,V_q]> \frac{(1+\epsilon)^2 n^2}{f^2}.
$$
By Theorem \ref{thm-Haxell}, we obtain
$$
\nu_H(F)\geq \nu^*_H(F)+o(n^2)\geq \frac{(1+\epsilon)^2 n^2}{f^2}+o(n^2).
$$
Consequently, since $H$ is $G$-free,
$$
ex_F(n,G)\geq \nu_H(F)\geq \frac{(1+\epsilon)^2 n^2}{f^2}+o(n^2).
$$
\hfill$\square$ \medskip

\noindent{\bf Proof of Theorem \ref{thm-2}:}
    We prove the first statement. Suppose for contradiction that there exists $\epsilon>0$ such that for infinitely many $n$,
$$
ex_{F(s)}(n,G) \geq \frac{n^2}{v(F(s))^2} + \epsilon n^2.
$$
For each such $n$, let $H_n$ be a rainbow $G$-free graph consisting of $ex_{F(s)}(n,G)$ monochromatic $F(s)$-copies.
By Lemma \ref{lem-star} and the graph Removal Lemma, we may delete at most $o(n^2)$ monochromatic $F(s)$-copies from $H_n$ to obtain a $G$-free graph $H_n'$.
Clearly, $H_n'$ contains at least
$$
\frac{n^2}{v(F(s))^2} + \frac{\epsilon n^2}{2}
$$
monochromatic copies of $F(s)$.
Since each monochromatic copy of $F(s)$ can be decomposed into $s^2$ edge‑disjoint copies of $F$, the graph $H_n'$ can be decomposed into
$$
s^2 \cdot ex_{F(s)}(n,G) \geq \frac{n^2}{v(F)^2} + \frac{\epsilon s^2 n^2}{2}
$$
edge‑disjoint copies of $F$ (each regarded as a monochromatic $F$-copy).
Because $H_n'$ is $G$-free, we obtain
$$
ex_{F}(n,G) \geq \frac{n^2}{v(F)^2} + \frac{\epsilon s^2 n^2}{2}
$$
for infinitely many $n$, contradicting the fact that $ex_{F}(n,G) = \frac{n^2}{v(F)^2} + o(n^2)$.

We now show the second statement. For any prime $s$ satisfying $v(F)\le s$, it is enough to show that $F(s)$ can be decomposed into $s^2$ edge‑disjoint copies of $F$.
Let $v(F)=t$ and let $R$ be the $s$-blow‑up of $K_t$ with parts $V_1=V_2=\cdots=V_t=\{0,1,\ldots,s-1\}$.
For any $x\in V_1$ and $y\in V_2$, define $K^{x,y}$ to be the copy of $K_t$ induced by the vertices $x$, $y$, and $x+iy$  (taking modulo $s$). in $V_{i+2}$ for each $i\in[t-2]$.
    \begin{claim}\label{clm-decompo}
        For two distinct pairs $(x,y)$ and $(x',y')$, where $x,x'\in V_1$ and $y,y'\in V_2$, we have that $K^{x,y}$ and $K^{x',y'}$ are edge-disjoint.
    \end{claim}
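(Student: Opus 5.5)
Here $K^{x,y}$ has vertex $x$ in $V_1$, $y$ in $V_2$, and $x+iy \pmod s$ in $V_{i+2}$ for $i\in[t-2]$. It is convenient to write $x+(j-2)y$ for its vertex in $V_j$, with the conventions that the coefficient of $y$ is $-1$ for $j=1$ and that $V_2$ is handled separately. I will argue by contradiction. Suppose two distinct pairs $(x,y)\neq(x',y')$ give cliques $K^{x,y}$ and $K^{x',y'}$ that share an edge. That edge joins two different parts $V_a$ and $V_b$ with $a<b$. I will show that the two endpoints of this edge determine the pair $(x,y)$ uniquely, which contradicts distinctness.

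\textbf{Case $a=1$, $b=2$.} The endpoints are $x$ and $y$ themselves, so $(x,y)=(x',y')$ immediately.

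\textbf{Case $a=1$, $b=i+2\geq 3$.} Here $x=x'$ and $x+iy\equiv x'+iy' \pmod s$, so $iy\equiv iy'$. Since $1\le i\le t-2<s$ and $s$ is prime, $i$ is invertible mod $s$, giving $y=y'$.

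\textbf{Case $a=2$, $b=i+2\geq 3$.} Here $y=y'$, and then $x+iy\equiv x'+iy'$ forces $x=x'$.

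\textbf{Case $a=i+2$, $b=j+2$ with $1\le i<j\le t-2$.} Subtracting the two congruences $x+iy\equiv x'+iy'$ and $x+jy\equiv x'+jy'$ gives $(j-i)(y-y')\equiv 0$. Since $0<j-i<t\le s$, the factor $j-i$ is a unit mod $s$, so $y=y'$ and then $x=x'$.

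The only point needing care is that every coefficient difference lies strictly between $0$ and $s$, so it is invertible mod $s$. This is exactly where the hypotheses $v(F)=t\le s$ and $s$ prime are used.

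Having established the claim, I will finish the proof of statement (2) as follows. There are $s^2$ pairs $(x,y)$, and the corresponding cliques are pairwise edge-disjoint. A counting check confirms they cover $R$: together they contain $s^2\binom t2$ edges, which equals $e(R)$, so they form a $K_t$-decomposition of $R=K_t(s)$.

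Now identify $V(F)$ with $[t]$, so that $F(s)$ is the subgraph of $R$ keeping only the edges between $V_a$ and $V_b$ for $ab\in E(F)$. Each $K^{x,y}$ meets every part in exactly one vertex, so restricting it to those pairs of parts gives a copy of $F$. These $s^2$ copies are edge-disjoint and cover $F(s)$. Statement (1) then yields statement (2).
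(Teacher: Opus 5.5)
Your proof is correct and is essentially the paper's argument: a shared edge yields one or two congruences mod $s$, and the relevant coefficient ($i$ or $j-i$, strictly between $0$ and $s$) is a unit modulo the prime $s$, which forces $(x,y)=(x',y')$; you also handle the $V_1$--$V_2$ edge explicitly, which the paper leaves implicit. The one blemish is the notational convention in your first paragraph, which is garbled (with $x+(j-2)y$ the vertex in $V_1$ would be $x-y$ rather than $x$) and is never used, so you should simply drop it.
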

    \begin{proof}
    Suppose for the contrary that $e\in E(K^{x,y})\cap E(K^{x',y'})$.
    First, assume that $e$ is an edge between $V_{i+2},V_{j+2}$ for some $t-2\geq i>j\geq 1$. Then $e=\{x+iy,x+jy\}=\{x'+iy',x'+jy'\}$.
    Clearly, $s$ divides both $(x+iy)-(x'+iy')$ and $(x+jy)-(x'+jy')$. Subtracting, we obtain $s\mid (i-j)(y-y')$.
    Since $s$ is a prime, either $s\mid(i-j)$ or $s\mid(y-y')$.
    Since $0<i-j<t\le s$, $s$ cannot divide $i-j$.
    As $y,y'\in[s]$, we must have $y=y'$.
    Now $s\mid [(x+iy)-(x'+iy')]$ together with $y=y'$ gives $s\mid(x-x')$. Thus $x=x'$, contradicting the assumption that $(x,y)\neq(x',y')$.
    
    Next, assume that either $e=\{x,x+iy\}=\{x',x'+iy'\}$ or $e=\{y,x+iy\}=\{y',x'+iy'\}$ for some $i\in[t-2]$ (by symmetry, say $e=\{x,x+iy\}=\{x',x'+iy'\}$).
    It is clear that $s\mid(x-x')$ and $s\mid i(y-y')$.
    Since $s$ is a prime, $0<i\leq s-2$ and $x,x',y,y'\in[s]$, it follows that $x=x'$ and $y=y'$, a contradiction.
\end{proof}

By Claim \ref{clm-decompo}, $R$ can be decomposed into $s^2$ edge-disjoint copies of $K_t$.
Note that 
$$R-\bigcup_{ij\notin E(F)}E(V_i,V_j)$$ is an $F(s)$ and 
$$F^{x,y}=K^{x,y}-\bigcup_{ij\notin E(F)}E(V_i,V_j)$$ 
is a copy of $F$ for each $x\in V_1,y\in V_2$.
Hence, $\{F^{x,y}:x\in V_1,y\in V_2\}$ is an $F$-decomposition of $F(s)$.
\hfill$\square$ \medskip

\section{Proof of Theorem \ref{thm-3+}}

\noindent{\bf Proof of Theorem \ref{thm-3+}(1):} We first show that $ex_{C_k}(n, C_{k-2}) = n^2/k^2 + o(n^2)$.
Let $G$ be a rainbow $C_{k-2}$-free graph consisting of $ex_{C_k}(n, C_{k-2})$ monochromatic copies of $C_k$.
Since $\{ C_\ell : 3 \le \ell \le k-2 \text{ and } \ell \text{ is odd} \} \subseteq \mathrm{Epi}(C_k)$,
by Lemma \ref{pre-lem-2} we may obtain an $n$-vertex graph $G'$ consisting of $ex_{C_k}(n, C_{k-2}) - o(n^2)$ monochromatic $C_k$-copies such that $G'$ is $C_\ell$-free for every odd $\ell$ with $3 \le \ell \le k-2$.
Denote by $\mathcal{F}$ the set of monochromatic $C_k$-copies in $G'$.

For each vertex $v \in V(G')$, the vertex $v$ is contained in exactly $d_{G'}(v)/2$ members of $\mathcal{F}$.
Therefore,
\begin{align}\label{Ineq-1-1}
\sum_{F \in \mathcal{F}} \sum_{v \in V(F)} d_{G'}(v)
= \sum_{v \in V(G')} \frac{d_{G'}(v)^2}{2}
\ge \frac{2 e(G')^2}{n}
= \frac{2 (k |\mathcal{F}|)^2}{n}.
\end{align}
On the other hand, for each monochromatic $k$-cycle $C$ of $G'$, we have that $|N_{G'}(v) \cap V(F)| \le 2$ for each $v \in V(G')$; otherwise, an odd cycle $C_\ell$ with $3 \le \ell \le k-2$ would appear, a contradiction.
Consequently, for each $F \in \mathcal{F}$,
$$\sum_{v \in V(F)} d_{G'}(v) \le 2 v(G') = 2n.$$
Thus,
$$\sum_{F \in \mathcal{F}} \sum_{v \in V(F)} d_{G'}(v) \le 2n |\mathcal{F}|.$$
Combining this with inequality (\ref{Ineq-1-1}) yields $|\mathcal{F}| \le n^2 / k^2$.
Hence,
$$ex_{C_k}(n, C_{k-2}) = \frac{n^2}{k^2} + o(n^2).$$

We now show that $ex_{C_k(t)}(n, C_{k-2}) = n^2/(kt)^2 + o(n^2)$. By the first statement of Theorem \ref{thm-2}, it suffices to prove that $H = C_k(t)$ can be decomposed into $t^2$ edge-disjoint copies of $C_k$.
Let $V_i = \{0,1,\ldots,t-1\}$, $i\in[k]$, be the $k$ parts of $H$. Edges of $H$ appear only in $E[V_1, V_k]$ or $E[V_i, V_{i+1}]$ for $i \in [k-1]$. For each $y \in V_1$ and $x \in V_2$, define $C^{y,x}$ as the $k$-cycle with vertex set ${y, x, x+y, x+2y, \dots, x+(k-2)y, y}$, where  $x + iy \in V_{i+2}$ for each $i \in [k-2]$ (taking
modulo $t$).
It remains to show that for distinct edges $xy$ and $x'y'$ with $y, y' \in V_1$ and $x, x' \in V_2$, the cycles $C^{y,x}$ and $C^{y',x'}$ are edge-disjoint. Suppose, to the contrary, that they share a common edge $e$.
If $e = \{x + iy, x + (i+1)y\} = \{x' + iy', x' + (i+1)y'\}$ for some $i \in [k-3]$, then we must have $x = x'$ and $y = y'$, a contradiction. Similarly, the remaining two cases, namely $e = \{y, x + (k-2)y\} = \{y', x' + (k-2)y'\}$ and $e = \{x, x + y\} = \{x', x' + y'\}$, both lead to $x = x'$ and $y = y'$, again a contradiction. Therefore, $C^{y,x}$ and $C^{y',x'}$ are edge-disjoint.
\hfill$\square$ \medskip

\smallskip

\noindent{\bf Proof of Theorem \ref{thm-3+}(2):} 
Assume that $H$ is an $n$-vertex graph consisting of $ex_{C_k(t)}(n, C_{k-2})-\epsilon n^2$ edge-disjoint monochromatic $C_k(t)$-copies and containing no rainbow $C_{k-2}$.
Since every odd cycle $C_\ell$ with $3\le \ell\le k-2$ belongs to $Epi(C_{k-2})$, Lemma \ref{pre-lem-2} implies that, after deleting at most $\epsilon n^2/2$ monochromatic $C_k(t)$-copies, we may assume that the remaining graph, denoted by $H_0$, is $C_\ell$-free for all odd $\ell\in[3,k-2]$.
Moreover, for each monochromatic $C_k(t)$-copy $F$ (with parts $X_0,X_1,\ldots,X_{k-1}$), the following holds.

\begin{claim}\label{clm-stab-neighbors}
\begin{itemize}
  \item[(i)] For every vertex $v\in V(H_0)$, $|N_{H_0}(v)\cap V(F)|\le 2t$.
  \item[(ii)] If $|N_{H_0}(v)\cap V(F)| = 2t$, then $N_{H_0}(v)\cap V(F)=X_{i-1}\cup X_{i+1}$ for some $i\in\{0,1,\ldots,k-1\}$ (indices taken modulo $k$).
\end{itemize}
\end{claim}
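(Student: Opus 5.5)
The plan is to use only that $H_0$ contains no odd cycle $C_\ell$ with $3\le \ell\le k-2$, together with the structure of $F=C_k(t)$: parts $X_0,\dots,X_{k-1}$ (indices mod $k$), with $X_j$ complete to $X_{j\pm1}$. We may assume $v\notin V(F)$. If $v\in X_i$, then $N_F(v)=X_{i-1}\cup X_{i+1}$ already has size $2t$, and any extra neighbour of $v$ in $V(F)$ would give a short odd cycle by the argument below. So in all cases we study the set $J=\{j: N_{H_0}(v)\cap X_j\neq\emptyset\}\subseteq \mathbb{Z}_k$.

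The key step is a distance lemma. Suppose $a\in X_j$ and $b\in X_{j'}$ are both neighbours of $v$ with $j\ne j'$, and let $d=\min\{|j-j'|,k-|j-j'|\}$ be the cyclic distance, so $1\le d\le (k-1)/2$. Inside $F$ there are $a$--$b$ paths of length $d$ (going the short way around the blow-up) and of length $k-d$ (going the long way). These paths can be chosen with distinct vertices because consecutive parts are complete bipartite; when $t=1$ the paths are just the cycle arcs. Adding $v$ turns them into closed walks of lengths $d+2$ and $k-d+2$, and these are genuine cycles since $v\notin V(F)$. Exactly one of $d+2$ and $k-d+2$ is odd, because $k$ is odd.

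\begin{itemize}
\item If $d$ is odd, the cycle of length $d+2$ is odd. Since $d\le (k-1)/2$, we get $3\le d+2\le (k+3)/2\le k-2$ for $k\ge7$, which is a contradiction. For $k=5$, $d=1$ gives a triangle, again a contradiction.
\item If $d$ is even, the cycle of length $k-d+2$ is odd, and it has length at most $k-2$ unless $d=2$ (for $d\ge4$ it is $\le k-2$). So $d=2$ is the only allowed even distance.
\end{itemize}

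Similarly, two neighbours of $v$ in the same part $X_j$ create only even cycles, so they are harmless.

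Consequently every two distinct indices in $J$ are at cyclic distance exactly $2$. In $\mathbb{Z}_k$ with $k\ge5$, a set in which all pairwise distances equal $2$ has size at most $2$, since three such indices would need $k=6$ or smaller. Hence either $J=\{j\}$ and $|N(v)\cap V(F)|\le t$, or $J=\{i-1,i+1\}$ and $|N(v)\cap V(F)|\le |X_{i-1}|+|X_{i+1}|=2t$. This gives (i). Equality in (i) forces the second alternative with both parts fully contained in $N(v)$, which is (ii). The same argument shows that a vertex $v\in X_i$ has no neighbours in $V(F)$ beyond $X_{i\pm1}$.

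The main obstacle is checking that the paths in the blow-up can be chosen vertex-disjoint from each other's interiors and of exactly the claimed lengths, in particular the long path of length $k-d$ when $t=1$ and the start and end parts coincide in index parity. This is handled by walking around the cycle of parts without repeating a part, which is possible since $k-d<k$. The small case $k=5$ needs a separate check of the bound $\le k-2$.
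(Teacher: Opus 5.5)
Your proposal is correct and is essentially the paper's argument: any two parts of $F$ that meet $N_{H_0}(v)$ must be at cyclic distance exactly $2$, since otherwise a short odd cycle appears, so at most the two parts $X_{i-1},X_{i+1}$ are hit; you also spell out the parity bookkeeping ($d+2$ versus $k-d+2$) that the paper leaves implicit. The one place to tidy is $v\in X_i$, where ``the same argument'' does not apply verbatim, because for $t=1$ an $a$--$b$ path in $F$ may run through $v$ itself; instead, for an extra neighbour $a\in X_j$ with $j\notin\{i-1,i,i+1\}$, close the two $v$--$a$ paths of $F$ with the edge $va$ to get cycles of lengths $d+1$ and $k-d+1$, one of which is odd with length in $[3,k-2]$.
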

\begin{proof}
  Suppose, without loss of generality, that $|N_{H_0}(v)\cap V(F)|>t$.
  Then there exist two indices $a,b\in\{0,1,\ldots,k-1\}$ such that $N_{H_0}(v)\cap X_a\neq\emptyset$ and $N_{H_0}(v)\cap X_b\neq\emptyset$.
  It must be that $\{a,b\}=\{i-1,i+1\}$ for some $i$; otherwise $H_0$ would contain an odd cycle $C_\ell$ with $3\le\ell\le k-2$, a contradiction.
  Consequently, no third index $c$ satisfies $N_{H_0}(v)\cap X_c\neq\emptyset$, and the claim follows.
\end{proof}

Let $\mathcal{C}$ denote the family of monochromatic $C_k(t)$-copies in $H_0$.
Then $|\mathcal{C}|\geq n^2/(kt)^2 - 3\epsilon n^2/2$.
Since $C_k(t)$ is $2t$-regular, each vertex $v$ belongs to exactly $d_{H_0}(v)/(2t)$ members of $\mathcal{C}$.
Using Lemma \ref{clm-stab-neighbors}, we obtain
$$
\sum_{F\in\mathcal{C}}\sum_{v\in V(F)} d_{H_0}(v)
\;\ge\; \sum_{v\in V(H_0)} \frac{d_{H_0}^2(v)}{2t}
\;\ge\; \frac{2e(H_0)^2}{tn}
\;\ge\; \frac{2\bigl(kt^2|\mathcal{C}|\bigr)^2}{tn}
\;=\;2tn|\mathcal{C}| - 3k^2t^3\epsilon n|\mathcal{C}|.
$$
Thus there exists a monochromatic $C_k(t)$-copy $C_0$ (with parts $Y_0,Y_1,\ldots,Y_{k-1}$) such that the set
$$
B = \bigl\{ u\in V(G) : |N_{H_0}(u)\cap V(C_0)| < 2t \bigr\}
$$
has size at most $3k^2t^3\epsilon n$.

Every vertex outside $B$ has exactly $2t$ neighbours on $C_0$. By part (ii) of the claim, if these neighbours lie in $Y_a\cup Y_b$ then $a\equiv b\pm1\pmod{k}$.
Therefore the vertices outside $B$ are partitioned into $k$ classes:
$$
V_i = \bigl\{ u\in V(G)\setminus B : N_{H_0}(u)\cap V(C_0) = Y_{i-1}\cup Y_{i+1}\bigr\},\qquad i\in\{0,1,2,\ldots,k-1\},
$$
In particular, $Y_i\subseteq V_i$.

\begin{lemma}\label{clm-stability}
If $u\in V_i$, $v\in V_j$ and $uv\in E(G)$, then $j\equiv i\pm1\pmod{k}$.
Consequently, $G[V(G)\setminus B] \subseteq C_k(V_0,V_1,\dots,V_{k-1})$, the blow‑up of $C_k$ with parts $V_0,\dots,V_{k-1}$.
\end{lemma}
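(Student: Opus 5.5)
We show that an edge $uv$ with $u\in V_i$, $v\in V_j$ and $j\not\equiv i\pm1\pmod k$ produces a short odd cycle in $H_0$, contradicting the fact that $H_0$ is $C_\ell$-free for every odd $\ell\in[3,k-2]$. Here $G$ is read as the cleaned graph $H_0$: the sets $V_i$ were defined through $N_{H_0}$, and the conclusion is meant for $H_0$. The key feature is that $u$ and $v$ have complete neighbourhoods $Y_{i-1}\cup Y_{i+1}$ and $Y_{j-1}\cup Y_{j+1}$ on $C_0$. This lets us route a path through the blow-up $C_0$ of any prescribed length and parity.

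Set $d\equiv j-i\pmod k$ with $d\in\{0,1,\dots,k-1\}$, and assume $d\notin\{1,k-1\}$.

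\textbf{Case $d=0$.} Then $u$ and $v$ have a common neighbour $w\in Y_{i+1}$, and $u,v,w$ form a triangle, which is impossible since $3\le k-2$. We must check that $w\notin\{u,v\}$. This holds because $u\in V_i$ is not adjacent to itself, so $u\notin N(u)\cap V(C_0)=Y_{i-1}\cup Y_{i+1}$, and similarly for $v$.

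\textbf{Case $2\le d\le k-2$.} By the symmetry $d\leftrightarrow k-d$ (swap $u$ and $v$), we may assume $2\le d\le (k-1)/2$, so that $d$ is the cyclic distance from $i$ to $j$. Choose $a\in Y_{i+1}$ adjacent to $u$ and $b\in Y_{j-1}$ adjacent to $v$. Inside $C_0$, take a path from $a$ to $b$ going forward through $Y_{i+1},Y_{i+2},\dots,Y_{j-1}$, one vertex per part. This path has $d-2$ edges, and when $d=2$ we have $a=b$. The walk $u\to a\to\cdots\to b\to v\to u$ is then a closed cycle of length $(d-2)+3=d+1$. We also need $u,v$ off this path; this holds since each $Y_m$ is an independent set of $C_0$'s blow-up structure and $u,v$ sit in the parts $Y_i,Y_j$ or outside $V(C_0)$, so the path vertices are all distinct from $u$ and $v$.

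If $d$ is even, the cycle length $d+1$ is odd and at most $(k-1)/2+1\le k-2$ for $k\ge5$. This contradicts the $C_\ell$-freeness of $H_0$.

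If $d$ is odd, we use the other direction around $C_0$ instead. Take $a'\in Y_{i-1}$ adjacent to $u$ and $b'\in Y_{j+1}$ adjacent to $v$, and join them by a path going backward of length $k-d-2$. Together with $u$, $v$ and the edge $uv$ this gives a cycle of length $k-d+1$. Since $k$ is odd and $d$ is odd, $k-d$ is even, so $k-d+1$ is odd. Moreover $d\ge3$ gives $k-d+1\le k-2$, and $d\le k-2$ gives length at least $3$. Again this is a forbidden odd cycle. Alternatively, instead of invoking the symmetry reduction, one can simply note that for every $d\in[2,k-2]$ one of the two lengths $d+1$ and $k-d+1$ is odd and lies in $[3,k-2]$.

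\textbf{Consequence.} Every edge of $H_0$ between vertices outside $B$ therefore joins consecutive classes $V_i,V_{i\pm1}$. Since the $V_i$ partition $V(H_0)\setminus B$, the inclusion in the blow-up $C_k(V_0,\dots,V_{k-1})$ follows immediately.

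\textbf{Main obstacle.} The only delicate point is confirming that the constructed closed walks are genuine cycles, meaning $u$ and $v$ are distinct from the internal path vertices. This needs two facts. First, $u\in Y_m$ forces $u\in V_m$, which follows from $Y_i\subseteq V_i$ together with the disjointness of the classes. Second, the endpoints $a,b$ (and $a',b'$) are chosen in parts different from those of $u$ and $v$. Once this is checked, all cycle lengths are exact and the parity argument goes through.
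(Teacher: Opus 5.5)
Your proof is correct and is essentially the paper's argument: when $i=j$ you get a triangle through a common neighbour in $Y_{i+1}$; otherwise one of the two routes around $C_0$ closes a cycle of length $s+1$ or $k-s+1$, exactly one of which is odd and at most $k-2$, giving a forbidden short odd cycle in $H_0$. Your extra checks make explicit what the paper leaves implicit, namely that the cycle vertices are distinct and that $G$ should be read as $H_0$.
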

\begin{proof}
Choose a $k$-cycle $x_0x_1\ldots x_{k-1}x_0$ in $C_0$ with $x_i\in Y_i$.
If $i=j$, then $uvx_{i+1}u$ forms a triangle in $H_0$, contradicting $H_0$ contains no odd cycle of length less than $k$.
If $j\equiv i+s\pmod{k}$ for some $2\le s\le k-2$, then either $ux_{i+1}x_{i+2}\ldots x_{j-1}vu$ or $ux_{i-1}x_{i-2}\ldots x_{j+1}vu$ is an odd cycle of length less than $k$, again a contradiction.
Hence $j\equiv i\pm1\pmod{k}$.
\end{proof}

Now delete every member of $\mathcal{C}$ that intersects $B$. Since a vertex $v$ belongs to $d_{H_0}(v)/(2t)$ cycles of $\mathcal{C}$,
$$
\bigl|\bigl\{ C'\in\mathcal{C} : V(C')\cap B\neq\emptyset \bigr\}\bigr|
\;\le\; \frac1{2t}\sum_{v\in B} d_{H_0}(v)
\;\le\; \frac1{2t}\,|B|\,n
\;=\; 3(kt)^2\epsilon n^2/2.
$$
Let $\mathcal{C}_1$ be the remaining family. Then
$$
|\mathcal{C}_1| = |\mathcal{C}| - 3(kt)^2\epsilon n^2/2\geq \frac{n^2}{(kt)^2}-2(kt)^2\epsilon n^2,
$$
and the union of its members is contained in $C_k(V_0,\dots,V_{k-1})$.
\hfill$\square$ \medskip

\section{Proof of Theorem \ref{thm-3}}

Since there exists a homomorphism from $F$ to $C_k$, it follows that $F$ is a subgraph of $C_k(t)$ for some positive integer $t$. Note that both $F$ and $G$ contain $C_k$ as a subgraph. By Proposition~\ref{leqleq},
$$
ex_{C_k(t)}(n, C_k) \le ex_{C_k(t)}(n, G) \le ex_F(n, G).
$$
Since there exists a homomorphism from $G$ to $C_k$ and $F$ contains a copy of $C_k$, there exists a homomorphism from $G$ to $F$; consequently, $ex_F(n, G) = o(n^2)$. Thus it suffices to show that $ex_{C_k(t)}(n, C_k) \ge n^{2-o(1)}$.
The proof strategy is as follows: first, we verify that $ex_{C_k}(n, C_k) \ge n^{2-o(1)}$, and then, via a blow-up construction, we prove that $ex_{C_k(t)}(n, C_k) \ge n^{2-o(1)}$.

We first establish $ex_{C_k}(n, C_k) \ge n^{2-o(1)}$.
The proof relies on a known result of Ruzsa (see Theorem 2.3 in \cite{Ruzsa}), which asserts the existence of a subset $A \subseteq [n]$ of size $n^{1-O(\sqrt{\log n})} = n^{1-o(1)}$ such that $A$ contains no tuple $(x_1, x_2, \dots, x_k)$ of not all equal integers satisfying $\sum_{i=1}^{k-1} x_i = (k-1)x_k$.
Let $n_0\leq n$ be the maximum integer with $t\binom{k+1}{2}\mid n_0$ and let $n_1=n_0/t=\binom{k+1}{2}N$.

We first construct an $n_1$-vertex graph $H$ as follows. Partition $V(H)$ into $k$ subsets $V_1,V_2,\ldots,V_k$ and set  $V_1 = [N], V_2 = [2N], \dots, V_k = [kN]$. Edges in $H$ are allowed only between consecutive parts $V_i$ and $V_{i+1}$ for $i \in [k-1]$, as well as between $V_1$ and $V_k$. For any vertices $x_1 \in V_1, x_2 \in V_2, \dots, x_k \in V_k$, we have $x_i x_{i+1} \in E(H)$ for $i \in [k-1]$ if and only if $x_{i+1} - x_i \in A$, and $x_1 x_k \in E(H)$ if and only if $(x_k - x_1)/(k-1) \in A$.

It is clear that every $k$-cycle in $H$ meets each $V_i$ in exactly one vertex, and each vertex of $V_1$ is contained in $|A|$ distinct $k$-cycles. Hence $H$ contains at least $|V_1| \cdot |A| = n_1^{2-o(1)}$ copies of $C_k$.

For a $k$-cycle $x_1 x_2 \dots x_k x_1$ in $H$, set $c_1 = x_2 - x_1$, $c_2 = x_3 - x_2$, $\dots$, $c_{k-1} = x_k - x_{k-1}$, and $c_k = (x_k - x_1)/(k-1)$. By construction, each $c_i$ belongs to $A$. Moreover, a direct computation gives $\sum_{i=1}^{k-1} c_i = (k-1) c_k$, which forces $c_1 = c_2 = \dots = c_k$ by the property of $A$. Consequently, each edge between $V_1$ and $V_2$ lies in exactly one $k$-cycle. Therefore $H$ contains $n_1^{2-o(1)}$ edge‑disjoint $k$-cycles (each regarded as a monochromatic cycle of a unique color). Since every $k$-cycle in $H$ must contain an edge between $V_1$ and $V_2$, no rainbow $k$-cycle exists in $H$. Hence $ex_{C_k}(n_1, C_k) \geq n^{2-o(1)}$.

Now we show $ex_{C_k(t)}(n, C_k) \geq n^{2-o(1)}$. Let $H'$ denote the subgraph of $H$ consisting of $n_1^{2-o(1)}$ monochromatic $C_k$-copies. Consider the $t$-blow-up $H'(t)$ of $H'$, where each vertex $v \in V(H')$ is replaced by a set $X_v$. 
Then $v(H'(t))=n_0$. 
For an edge $ab \in E(H'(t))$ with $a \in X_u$ and $b \in X_v$, assign to $ab$ the same color as the edge $uv$ in $H'$. Consequently, $H'(t)$ decomposes into 
$$ex_{C_k}(n_0, C_k)  = n_0^{2-o(1)} = n^{2-o(1)}$$
edge-disjoint copies of $C_k(t)$. It remains to verify that $H'(t)$ is rainbow $C_k$-free. Suppose to the contrary that $G^*$ is a rainbow copy of $C_k$ in $H'(t)$. For each vertex $u \in V(G^*)$, let $u'$ denote the unique vertex in $V(H')$ such that $u \in X_{u'}$, and set $U = \{ u' : u \in V(G^*) \}$. If $|U| = k$, then $H'[U]$ contains a rainbow copy of $C_k$, contradicting the choice of $H'$. If $|U| <k$, then $H'[U]$ contains a shorter odd cycle than $C_k$. However, Since $H'$ is a blow-up of $C_k$, $g_o(H')=k$, a contradiction.
\hfill$\square$ \medskip

\section{Proof of Theorem \ref{Thm-c4}}

To establish the foundation for the subsequent proof, we first introduce some necessary terminology.  
A vertex $w$ is called a {\em rainbow common neighbor} of two distinct vertices $x$ and $y$ if $w$ is adjacent to both $x$ and $y$, and the colors on the edges $wx$ and $wy$ are distinct.
Based on the number of such neighbors, we classify an unordered pair of distinct vertices $\{u, v\}$ as follows: If $\{u, v\}$ has three rainbow common neighbors, it is called a {\em large vertex pair}; if $\{u, v\}$ has two rainbow common neighbors, it is called a {\em medium vertex pair}; if $\{u, v\}$ has at most one rainbow common neighbor, it is called a {\em small vertex pair}.

For an edge-colored graph $G$ and an edge $e\in E(G)$, we use $c(e)$ to denote the color of edge $e$, and use $C(G)$ to denote the set of colors appearing in $G$. For two edge-colored graphs $G_1$ and $G_2$, we say $G_1$ and $G_2$ have the same {\em coloring pattern} if
\begin{itemize}
    \item $G_1$ is isomorphic to $G_2$, and 
    \item there exist an isomorphism $\phi:V(G_1)\rightarrow V(G_2)$ and a bijection $f:C(G_1)\rightarrow C(G_2)$ such that for any edge $uv$ of $G_1$, $f(c(uv))=c(\phi(u)\phi(v))$.
\end{itemize}
For a large vertex pair $\{u,v\}$ (assume the three rainbow common neighbors of $u,v$ are $x_1,x_2,x_3$), we use $B^{u,v}$ denote the complete bipartite graph with parts $\{u,v\}$ and $\{x_1,x_2,x_3\}$.

\begin{figure}[ht]
    \centering
    \includegraphics[width=250pt]{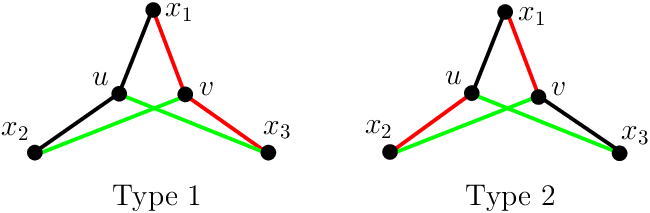}\\
    \caption{Two types of coloring patterns of $B^{u,v}$.} \label{Types}
\end{figure}

\begin{lemma}\label{lem-5-3}
Let $G$ be a rainbow $C_4$-free graph consisting of monochromatic $C_k$-copies.
Then for any two vertices $u,v$ of $G$, 
\begin{itemize}
    \item if $k\geq 6$, then $u,v$ have at most six rainbow common neighbors; if $k=4,5$, then $u,v$ have at most three rainbow common neighbors.
    \item  If $u,v$ have exactly three rainbow common neighbors, then
    \begin{itemize}
      \item [(1).] if $k=5$, then the coloring pattern of $B^{u,v}$ is Type 1 or 2 (see Figure \ref{Types});
      \item [(2).] if $k=4$, then the coloring pattern of $B^{u,v}$ is Type 1 (see Figure \ref{Types}).
    \end{itemize}
\end{itemize}
\end{lemma}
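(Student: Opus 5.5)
Throughout, fix $u,v$ and let $w_1,\dots,w_m$ be their rainbow common neighbors. For each $i$ write $a_i=c(uw_i)$ and $b_i=c(vw_i)$; by definition $a_i\neq b_i$. The plan is to encode the $w_i$ as a multiset of $2$-element \emph{color pairs} $P_i=\{a_i,b_i\}$, with the endpoint $u$ or $v$ remembered. Then we use only two facts. First, rainbow $C_4$-freeness forces these pairs to intersect pairwise. Second, the monochromatic $C_k$-structure limits how often a single color can occur among them.

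\textbf{Step 1 (pairwise intersection).} For $i\neq j$, the $4$-cycle $uw_ivw_ju$ has colors $a_i,b_i,b_j,a_j$. Since $G$ is rainbow $C_4$-free, two of these colors coincide. We already know $a_i\ne b_i$ and $a_j\neq b_j$, so $P_i\cap P_j\neq\emptyset$. A family of pairwise intersecting $2$-sets (repetitions allowed) is either a \emph{star}, meaning every pair contains a fixed color $c$, or is supported on a \emph{triangle} of three colors $\{a,b,c\}$ and uses only the pairs $\{a,b\},\{b,c\},\{a,c\}$.

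\textbf{Step 2 (multiplicity of a color).} Let $\mu(c)$ be the number of occurrences of color $c$ among all $a_i$ and $b_i$. Each color class is a single $C_k$, so each vertex has at most two edges of a given color. Hence $c$ occurs at most twice among the $a_i$ and at most twice among the $b_i$, giving $\mu(c)\le 4$. For $k=4,5$ we sharpen this to $\mu(c)\le 2$ by examining the color-$c$ cycle $D$ containing both $u$ and $v$; if $D$ misses one of them, the bound is immediate.
\begin{itemize}
\item If $u,v$ are opposite on a $C_4$, both of their $c$-neighbors are common neighbors reached by two edges of color $c$, so they are not rainbow, and $\mu(c)=0$.
\item If $u,v$ are at distance two on a $C_5$, their unique common $D$-neighbor is not rainbow, leaving at most one $c$-edge at each of $u$ and $v$.
\item If $u,v$ are adjacent on $D$, each has only one further $c$-edge.
\end{itemize}
In every case $\mu(c)\le 2$.

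\textbf{Step 3 (counting).} In the star case every $P_i$ contains $c$, so $m\le\mu(c)$. This gives $m\le 4$ in general and $m\le 2$ when $k\in\{4,5\}$. In the triangle case $2m=\mu(a)+\mu(b)+\mu(c)$, so $m\le 6$ in general and $m\le 3$ when $k\in\{4,5\}$. Together these give the first bullet.

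For the second bullet, $m=3$ with $k\in\{4,5\}$ forces the triangle case with $\mu=2$ for each color, so the pairs are exactly $\{a,b\},\{b,c\},\{a,c\}$, each occurring once. Up to symmetry there are two ways to place the three colors on the $u$-side. In the \emph{cyclic} placement each color appears once at $u$ and once at $v$. In the \emph{non-cyclic} placement some color appears twice at $u$ (or twice at $v$). I expect these two placements to be the Type~1 and Type~2 patterns of Figure~\ref{Types}, which proves (1).

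\textbf{Main obstacle: statement (2).} For $k=4$ we must exclude the non-cyclic placement. Suppose, say, $uw_1,uw_2$ both have color $a$. Then the color-$a$ $C_4$ is $uw_1xw_2u$ for some vertex $x$. By Step 2, $x\ne v$. I would first test the $4$-cycles through $x$, such as $vw_1xw_2v$, and cycles that mix $w_3$ with $x$. The cycle $vw_1xw_2v$ alone is not rainbow, because its two edges at $x$ both have color $a$. So I expect the contradiction to come from combining the position of $x$ with the remaining $b$- and $c$-edges at $v$ and $w_3$, likely by showing that $x$ would become a fourth rainbow common neighbor or that some $4$-cycle through $w_3$ is rainbow. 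This is the delicate case analysis I anticipate.
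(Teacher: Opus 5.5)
Steps 1--3 are correct, and they are a cleaner route to the first bullet and to (1) than the paper's. The paper bounds $d\le 7$ by counting black/red edges at $u$ and $v$, and rules out $d=7$ separately. It then splits into two cases: one of the stars $S_u,S_v$ is not rainbow, which yields Type 1, or both are rainbow, which yields Type 2. Your pairwise-intersecting colour pairs, together with the multiplicity bound on $\mu(c)$, replace that case analysis. In the paper's terms, Type 1 is your non-cyclic placement and Type 2 is your cyclic one.

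\textbf{The gap is in (2).} You propose to rule out the \emph{non-cyclic} placement when $k=4$. But that placement actually occurs, so the case analysis you anticipate cannot end in a contradiction. For example, take three edge-disjoint $4$-cycles on seven vertices:
\begin{itemize}
\item $uw_1xw_2u$ in colour $a$,
\item $uvw_1w_3u$ in colour $b$,
\item $vw_2zw_3v$ in colour $c$,
\end{itemize}
where $x,z$ are new vertices. Only three colours are present, so there is no rainbow $C_4$ at all. Yet $w_1,w_2,w_3$ are exactly the rainbow common neighbours of $u,v$, with colours $a,a,b$ at $u$ and $b,c,c$ at $v$. The vertex $x$ in your sketch is just this $x$, and nothing goes wrong.

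What must be excluded for $k=4$ is the \emph{cyclic} placement, and this is short. There each colour $c$ appears exactly once at $u$, on $uw_i$, and once at $v$, on $vw_j$ with $j\neq i$. The colour-$c$ $C_4$ contains these two disjoint edges, so its vertex set is $\{u,v,w_i,w_j\}$. The crossing edges $uw_j$ and $vw_i$ have colours $a_j\neq c$ and $b_i\neq c$, so this $C_4$ must consist of $uw_i$, $vw_j$, $uv$ and $w_iw_j$. Doing this for two different colours puts the edge $uv$ into two differently coloured monochromatic $C_4$'s, which contradicts edge-disjointness. This is exactly how the paper disposes of Type 2 for $k=4$ (``$uv$ belongs to both a green and a red $C_4$''). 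With this replacement, your argument proves the whole lemma.
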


\begin{proof}
    Suppose $u$ and $v$ share $d$ rainbow common neighbors, denoted $x_1, x_2, \ldots, x_d$.
Without loss of generality, assume that $c(ux_1)$ is black and $c(vx_1)$ is red.

Since $G$ is rainbow $C_4$-free, for each $i \in \{2,3,\ldots,d\}$, we have $\{c(ux_i), c(vx_i)\} \cap \{\text{red,black}\} \neq \emptyset$.
Given that each of $u$ and $v$ is incident to exactly four edges colored either black or red, and $ux_1$, $vx_1$ account for two of these, it follows that $d \leq 7$.

Suppose, for contradiction, that $d = 7$. Then all rainbow common neighbors appear as depicted in Figure \ref{geq6}(1).
In this case, $c(ux_4) = c(ux_5)$ must be a new color, say blue. By the same reasoning, $c(ux_3)$ is also blue. Consequently, $u$ is incident to three blue edges, a contradiction. Hence, $d \leq 6$.

Now assume $k = 4,5$ and $d \geq 3$. Define two stars $S_u = G[ux_1,\ldots,ux_d]$ and $S_v = G[vx_1,\ldots,vx_d]$.

\noindent{\bf Case 1.} $S_u$ or $S_v$ is not rainbow.

By symmetry, we may assume $S_u$ is not rainbow and that $c(ux_2) =$ black.
Choose an integer $p$ with $3\leq p\leq d$ arbitrary, and let $c(vx_p)=\alpha$. Clearly, $\alpha=$black; otherwise, $k = 5$ and the black $C_5$ is $x_px_1ux_2vx_p$, contradicting that $x_2$ is a rainbow common neighbor of $u$ and $v$. 

If there exists such integer $p$ such that $\alpha$ is neither black nor red (say $\alpha=$ blue), then since $ux_1vx_pu$ is not a rainbow $C_4$, we must have $c(ux_p)=$red.
Since $ux_2vx_pu$ is not a rainbow $C_4$, it follows that $c(vx_2)$ is either red or blue.
If $c(vx_2)$ is red, then $k=5$, and the red $C_5$ and the black $C_5$ have the common edges, a contradiction.
Consequently, for each $3\leq p\leq d$, $c(vx_2)=c(vx_p)=\text{blue}$, implying 
$d=3$.
Moreover,
the coloring pattern of $B^{u,v}$ is Type 1.

Now we assume that for each $3\leq p\leq d$, $\alpha$=red.
Since $c(vx_1)$ is also red, it follows that $d=3$.
For the edges $ux_3$ and $vx_2$, it is clear that $\{c(ux_3), c(vx_2)\} \cap \{\text{red},\text{black}\} = \emptyset$ (for otherwise, if $c(vx_2)=\text{red}$ or $c(ux_3)=\text{black}$, then $u$ is incident to three black edges or $v$ is incident to three red edges, a contradiction).
Moreover, we have $c(vx_2) = c(ux_3)$, which must be a new color (say green); otherwise $ux_3vx_2u$ would form a rainbow $C_4$, a contradiction.
Consequently, the coloring pattern of $B^{u,v}$ is Type 1.
Furthermore, $k = 5$; otherwise, if $k=4$, then $uv$ belongs to both a green and a red $C_4$, a contradiction.

\noindent{\bf Case 2.} Both $S_u$ and $S_v$ are rainbow.

Without loss of generality, let $c(ux_2) =$ red and $c(vx_3) =$ black.
Since both $S_u$ and $S_v$ are rainbow,
$\{c(ux_3), c(vx_2)\} \cap \{\text{red},\text{black}\} = \emptyset$ and $d=3$.

Since $ux_3vx_2u$ is not a rainbow $C_4$ and $\{c(ux_3), c(vx_2)\} \cap \{\text{red},\text{black}\} = \emptyset$, we must have $c(ux_3) = c(vx_2)$, which is a new color (say green).
Thus, for $k = 4,5$, the coloring pattern of $B^{u,v}$ is Type 2.
Furthermore, $k = 5$; otherwise, if $k=4$, then $uv$ belongs to both a green and a red $C_4$, a contradiction.
\end{proof}

\begin{figure}[ht]
    \centering
\includegraphics[width=300pt]{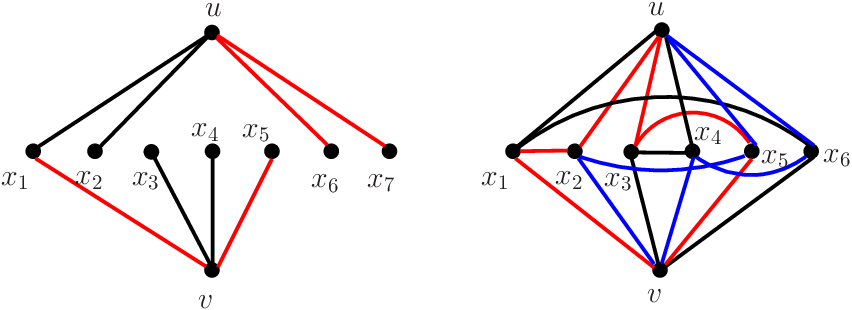}\\
    \caption{Rainbow common neighbors of $u,v$ for the case $d=7$ (left side). In the right side figure, $u,v$ have six rainbow common neighbors, this demonstrates the sharpness of $d$ for the case $k=6$.} \label{geq6}
\end{figure}

The following lemma helps estimate the
number of large, medium, and small vertex pairs.

\begin{lemma}\label{lem-2-1}
If $G$ is a rainbow $C_4$-free graph consisting of monochromatic $C_4$-copies, then
\begin{itemize}
    \item for any large vertex pair $\{u,v\}$, two of the three pairs in their rainbow common neighbors are small vertex pairs, and
    \item for any small vertex pair $\{x,y\}$, there are at most one large vertex pair whose rainbow common neighborhoods contain $\{x,y\}$.
\end{itemize}
\end{lemma}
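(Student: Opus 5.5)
The plan is to run everything through the $k=4$ case of Lemma~\ref{lem-5-3}. That lemma says the coloring pattern of $B^{u,v}$ for a large pair is Type~1. I will use the rigid colored structure it forces together with two facts. First, every vertex meets at most two edges of each color, since the monochromatic copies are $C_4$'s. Second, any $4$-cycle using four distinct colors is forbidden.

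First I would write out Type~1 explicitly for $k=4$, as it arises in Case~1 of the proof of Lemma~\ref{lem-5-3}, up to relabeling:
\[
c(ux_1)=c(ux_2)=\text{black},\quad c(ux_3)=c(vx_1)=\text{red},\quad c(vx_2)=c(vx_3)=\text{blue}.
\]
Each color class containing two edges at a single vertex then pins down three vertices of its monochromatic $C_4$. The black $C_4$ is $x_1ux_2w_bx_1$ for some $w_b$, and the blue $C_4$ is $x_2vx_3w_{\ell}x_2$ for some $w_\ell$. The red $C_4$ contains the edges $vx_1$ and $ux_3$; since it has only four vertices, it must be $x_1vux_3x_1$ or $x_1vx_3ux_1$. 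The latter is impossible because $vx_3$ is blue, so the red cycle is $x_1vux_3x_1$, with $uv$ and $x_1x_3$ both red.

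\textbf{First bullet.} The target is to show that $\{x_1,x_2\}$ and $\{x_2,x_3\}$ are small, which is the natural choice by the symmetry swapping $u,v$ and black with blue.
\begin{itemize}
\item For $\{x_1,x_2\}$: $u$ is not a rainbow common neighbor (both edges black), $w_b$ is not either, and $v$ is (red, blue). Suppose $z\ne v$ is another one. Applying rainbow-$C_4$-freeness to $x_1vx_2zx_1$ forces $c(zx_1)=\text{red}$ or $c(zx_2)=\text{blue}$. The first option puts $zx_1$ into the red cycle $x_1vux_3$, so $z=x_3$; the second puts $zx_2$ into the blue cycle, so $z=w_\ell$. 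In each surviving case I would check whether $z$ is actually a rainbow common neighbor. For $z=x_3$ this means checking whether the edge $x_3x_2$ exists with a color other than red; a blue $x_2x_3$ edge would create a second blue edge at $x_3$, which the blue cycle already uses. I would look for a color-count or rainbow-$C_4$ contradiction, such as the cycle $x_3ux_2w_bx_3$ or $x_3x_1w_bx_2x_3$.
\item For $\{x_2,x_3\}$ I would run the same argument with the roles of $u,v$ and the colors interchanged.
\end{itemize}

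\textbf{Second bullet.} Suppose a small pair $\{x,y\}$ lies in the rainbow common neighborhoods of two distinct large pairs $\{u,v\}$ and $\{u',v'\}$. In the explicit Type~1 structure, each large pair containing $\{x,y\}$ in its neighborhood supplies a rainbow common neighbor of $x,y$, namely the member of the large pair on which the two edges to $x,y$ differ in color (the vertex $v$ above for $\{x_1,x_2\}$). Since $\{x,y\}$ is small, the two large pairs must supply the same vertex, say $v=v'$. Then $u$ and $u'$ both see $x,y$ with one repeated color, which by the first bullet's analysis must be the color of the $C_4$ through $x u y$. Since that $C_4$ is unique, this forces $u=u'$, so the pairs coincide.

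The step I expect to be the main obstacle is the exhaustive elimination of the few ``coincidence'' candidates ($z=x_3$, $z=w_\ell$, $w_b=w_\ell$, and similar). There I would systematically use the two-edges-per-color bound at each vertex and the forbidden rainbow $C_4$'s through $u$ or $v$.
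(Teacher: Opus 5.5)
Your setup matches the paper. For $k=4$ the Type~1 pattern forces the red $C_4$ to be $x_1vux_3x_1$. The two small pairs are the star pairs $\{x_1,x_2\}$ and $\{x_2,x_3\}$. Your reduction of a second rainbow common neighbor $z$ of $\{x_1,x_2\}$ to $z\in\{x_3,w_\ell\}$ is the paper's first step.

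\textbf{First bullet: elimination not finished.} You never complete the elimination, and the cycles you propose do not work. The cycle $x_3ux_2w_bx_3$ uses an edge $w_bx_3$ you have no reason to have, and $x_3x_1w_bx_2x_3$ contains two black edges. The correct cycles are these:
\begin{itemize}
\item If $z=x_3$, then $c(x_2x_3)$ is not red. It is not blue, since the blue edges at $x_3$ go to $v$ and $w_\ell$. It is not black, since $w_b\neq x_3$ because $x_1x_3$ is red. So $x_2x_3vux_2$ is a rainbow $C_4$.
\item If $z=w_\ell$, then $c(x_1w_\ell)$ is not blue. It is not red, since the red edges at $x_1$ go to $v$ and $x_3$. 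It is not black, since $w_\ell\neq u,w_b$ because $w_\ell x_2$ is blue while $ux_2,w_bx_2$ are black. So $x_1w_\ell x_3ux_1$ is a rainbow $C_4$.
\end{itemize}
This part is fixable along the lines you intended.

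\textbf{Second bullet: a genuine gap.} After correctly obtaining $v=v'$, you assert that the repeated color of $u'$ on $\{x,y\}$ ``must be the color of the $C_4$ through $xuy$'' by the first bullet's analysis. The first bullet gives nothing of the kind; it only controls rainbow common neighbors of $\{x_1,x_2\}$. A priori $x_1,x_2$ can be the ends of monochromatic paths of length two in several different colors, as happens in the $G_0(2)$ construction. So this claim has to come from the largeness of $\{u',v\}$, which you never use.

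Even granting the claim, uniqueness of the black $C_4$ $x_1ux_2w_bx_1$ only yields $u'\in\{u,w_b\}$, because $w_b$ also sees $x_1,x_2$ in black. You never exclude $u'=w_b$.

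\textbf{The missing idea.} Apply the Type~1 structure to $B^{u',v}$ itself. The vertex $v$ sees $x_1,x_2$ in the two distinct colors red and blue. At most one of these can be the color of the star at $v$ in $B^{u',v}$, so the other also occurs on an edge at $u'$. Hence $u'$ lies on the red $C_4$ $x_1vux_3$ or on the blue $C_4$ $x_2vx_3w_\ell$.

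Discard $u$ (which returns the same pair) and $x_1,x_2$. Only $u'\in\{x_3,w_\ell\}$ remains, and both fail:
\begin{itemize}
\item If $u'=x_3$, then $x_1$ is not a rainbow common neighbor of $u',v$, since $x_3x_1$ and $vx_1$ are both red.
\item If $u'=w_\ell$, then $x_2$ is not a rainbow common neighbor of $u',v$, since $w_\ell x_2$ and $vx_2$ are both blue.
\end{itemize}
This is exactly how the paper concludes the second bullet.
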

\begin{proof}
    Let $\{u,v\}$ be a large vertex pair and $x_1,x_2,x_3$ be their rainbow common neighbors.
    By Lemma \ref{lem-5-3}, the coloring pattern is of Type 1.
    Furthermore, there exist two additional vertices $y_1$ and $y_2$ such that the union of the three monochromatic $C_4$s has the form shown in Figure \ref{c4-1}.

\begin{figure}[ht]
    \centering   \includegraphics[width=130pt]{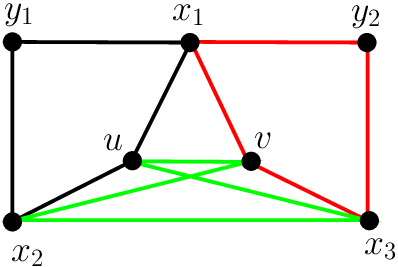}\\
    \caption{The union of three  monochromatic $C_4$s. } \label{c4-1}
\end{figure}

    We complete the first statement by showing $\{x_1,x_3\}$ and $\{x_1,x_2\}$ are small vertex pairs.
    By symmetry, we only need to show that $\{x_1,x_3\}$ is a small vertex pair.
    Note that $u$ is a rainbow common neighbor of $\{x_1,x_3\}$.
    Assume for the contrary that $z\neq u$ is also a rainbow common neighbor of $\{x_1,x_3\}$.
    Since $zx_1ux_3z$ is not a rainbow $C_4$, it follows that $\{c(zx_1),c(zx_3)\}\cap\{\text{black,green}\}\neq \emptyset$.
    Hence, $z\in \{x_2,y_1\}$.
    If $z=x_2$, then $x_2x_1\in E(G)$ and $c(x_2x_1)\notin\{\text{red,blue,green}\}$.
    However, $x_1x_2uvx_1$ is a rainbow $C_4$, a contradiction.
    Similarly, if $z=y_1$, then $y_1x_2vx_3y_1$ is a rainbow $C_4$, a contradiction.

    Next, we show the second statement.
    Suppose to the contrary that $\{x,y\}$ is a small vertex pair that is contained in the rainbow common neighborhoods of two different large vertex pairs $\{u,v\}$ and $\{u',v'\}$. Assume that the coloring pattern of $B^{u,v}$ is shown in Figure \ref{Types}(1).
    Then either $\{x,y\}=\{x_1,x_2\}$ or $\{x,y\}=\{x_1,x_3\}$ (by symmetry, say $x=x_1$ and $y=x_2$).
    If $\{u,v\}\cap \{u',v'\}=\emptyset$, then one of $\{u,v\}$ and one of $\{u',v'\}$ are rainbow common neighbors of $x_1,x_2$, a contradiction.
    Hence, $\{u,v\}\cap \{u',v'\}\neq \emptyset$ (without loss of generality, let $v=v'$).


Since $x_1$ and $x_2$ are rainbow common neighbors of $u'$ and $v$, we have $\{\text{green},\text{red}\} \subseteq C(B^{u',v})$.
By the characterization of coloring pattern Type 1, $u'$ lies in either the red or the green $C_5$.
Hence $u' \in \{x_3, y_2\}$ (see Figure \ref{c4-1}).
In either case, one of $x_1$ or $x_2$ is not a rainbow common neighbor of $u'$ and $v$, a contradiction.
\end{proof}

\noindent{\bf Proof of Theorem \ref{Thm-c4}:}
For $k\geq 5$, let $G$ be an $n$-vertex rainbow $C_4$-free graph consisting of $ex_{C_k}(n,C_4)$ monochromatic $C_k$-copies.
Let
$$
d(k)=\left\{
\begin{array}{ll}
6,   &k\geq 6;\\
3, & k=5.\\
\end{array}
\right.$$
For any two vertices $u,v$ of $G$, there are at most $d(k)$ rainbow common neighbors in $G$.
Hence, 
$$\# \mbox{ rainbow }K_{1,2}\leq d(k){n\choose 2}.$$
On the other hand, since each $v\in V(G)$ belongs to exactly $d_G(v)/2$ monochromatic $C_k$s,
$$\#\mbox{ Rainbow }K_{1,2}=4\sum_{v\in G} {d(v)/2\choose 2} =\sum_{v\in V(G)}\left(\frac{d(v)^2}{2}-d(v)\right)\geq \frac{1}{2n}\left(\sum_{v\in G}d(v)\right)^2-2e=\frac{2e^2}{n}-2e.$$
Hence, $\frac{2e^2}{n}-2e\leq d(k){n\choose 2}$.
If $k\geq 6$, then 
$e(G)\leq (\sqrt{6}n^{3/2}+n)/2$, and hence 
$$ex_{C_k}(n,C_4)=e(G)/k\leq\frac{\sqrt{6}n^{3/2}+n}{2k}.$$
If $k=5$, then $e(G)\leq (\sqrt{3}n^{3/2}+n)/2$, and hence 
$$ex_{C_5}(n,C_4)=e(G)/5\leq\frac{\sqrt{3}n^{3/2}+n}{10}.$$

For $k=4$, we will give a more subtle discussion.
Let 
$\mathcal{P}$ be the set of $(P_s,P_\ell)$, where $P_s$ is a small vertex pair and $P_\ell$  is a large vertex pair whose rainbow common neighborhood contains both vertices in $P_s$.
Assume that there are $s$ small vertex pairs, $m$ medium vertex pairs and $\ell$ large vertex pairs in $G$.
Then $s+m+\ell={n\choose 2}$.
The first statement of Lemma \ref{lem-2-1} implies $|\mathcal{P}|=2\ell$, and the second statement of Lemma \ref{lem-2-1} implies $|\mathcal{P}|\leq s$.
Hence, $\ell\leq s/2$.
Since each large vertex pair has three rainbow common neighbors, each medium vertex pair has  two rainbow common neighbors and each small vertex pair has at most one rainbow common neighbors, it follows that 
$$\#\mbox{ rainbow }K_{1,2}\leq 3\ell+2m+s=2{n\choose 2}+\ell-s\leq 2{n\choose 2}.$$
Consequently, $2{n\choose 2}\geq \frac{2e^2}{n}-2e$, implying $e(G)\leq (n+\sqrt{2}n^{3/2})/2$ and
$$ex_{C_4}(n,C_4)=e(G)/4\leq \frac{n+\sqrt{2}n^{3/2}}{8}.$$

We now construct a lower bound.
Let $n_0=\lfloor n/2\rfloor$ and let $G_0$ be an $n_0$-vertex $C_4$-free graph with
$$
e(G_0) = {ex}(n', C_4)=\frac{1}{2}n_0^{3/2} + o(n_0^{3/2}).$$
Define $G'$ as the $2$-blow-up $G_0(2)$ of $G_0$. Then $n-1\leq |G'| = n$. In this construction, each edge of $G_0$ is replaced by a $C_4$ in $G'$, and each vertex $v \in V(G_0)$ is replaced by a $2$-set, denoted by $X_v$. Clearly, $G'$ consists of 
$$ex(n_0,C_4)=\frac{1}{2}n_0^{3/2} + o(n_0^{3/2})=\frac{\sqrt{2}n^{3/2}}{8}+o(n^{3/2})$$ monochromatic $C_4$-copies. Assign a distinct color to each of these $C_4$'s.
Next, we show that $G'$ contains no rainbow $C_4$.
Suppose, to the contrary, that $G'$ contains a rainbow $C_4$, and denote it by $a_1a_2a_3a_4a_1$.
Then there exist vertices $v_1, v_2, v_3, v_4$ in $G_0$ such that $v_i \in X_{a_i}$ for each $i \in [4]$.
It is easy to verify that $v_1, v_2, v_3, v_4$ are distinct, and hence $v_1v_2v_3v_4v_1$ is a $C_4$ in $G_0$, a contradiction.
Therefore,
$$ex_{C_4}(n,C_4)\geq ex(n/2,C_4)\geq \frac{\sqrt{2}n^{3/2}}{8}+o(n^{3/2}).$$
\hfill$\square$ \medskip

\section{Concluding remarks}

In this paper, we focused mainly on the $F$-multicolor Tur\'an number in the case where both $F$ and $G$ are cycles. In this setting, together with the previously known characterization of the upper bound case, our results give a complete description of when $ex_F(n,G)$ attains each of the three natural thresholds: the upper bound from Ineq.~(\ref{upper-lower}), the lower bound from Ineq.~(\ref{upper-lower}), and the $(6,3)$-type bound $n^{2-o(1)}$. For general graph pairs $(F,G)$, however, many basic questions remain open.

\smallskip

\noindent{\bf 1. Graph pairs attaining the lower bound form Ineq.~(\ref{upper-lower}).}
Following \cite{Imolay}, a fundamental open problem is to characterize all graph pairs $(F,G)$ for which
$$ex_F(n,G)=\frac{n^2}{v(F)^2}+o(n^2).$$
This problem is difficult even in seemingly simple cases. Although Theorem \ref{thm-1} does not provide a complete characterization, it gives an effective exclusion method for determining whether the multicolor Tur\'{a}n number attains the lower bound form Ineq.~(\ref{upper-lower}).

\begin{figure}[ht]
    \centering   \includegraphics[width=250pt]{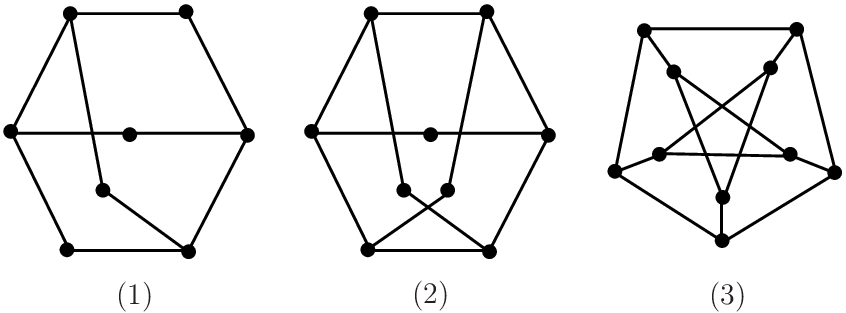}\\
    \caption{Three possible graphs of $F$ for $ex_F(n,C_3)$ attaining the lower bound. } \label{leq-ten}
\end{figure}

For example, when $G=C_3$, Theorem \ref{thm-1} provides a practical way to screen candidate graphs $F$. Since a nondegenerate pair $(F,C_3)$ must admit no homomorphism from $C_3$ to $F$, any such graph $F$ is necessarily triangle-free; if we further restrict to $C_4$-free graphs, then condition (A) in Theorem \ref{thm-1} is automatic for every nonadjacent pair. Thus one is naturally led to search among graphs with no $3$-cycles and no $4$-cycles for those that avoid the obstruction given by Theorem \ref{thm-1}. With the help of programming, among graphs on at most ten vertices, besides $F=C_5$, there are only three additional candidates that survive this test (see Figure \ref{leq-ten}). Whether any of these candidates actually attains the lower bound remains open.

\smallskip

\noindent{\bf 2. The nondegenerate case when both $F$ and $G$ are cycles.}
When $F=C_{2k+1}$ and $G=C_{2\ell+1}$ with $k>\ell$, the first nontrivial case is $k=\ell+1$, which is settled in our paper by the asymptotic formula
$$ex_{C_{2\ell+3}}(n,C_{2\ell+1})=\frac{n^2}{(2\ell+3)^2}+o(n^2).$$
This is closely parallel to the corresponding generalized Tur\'an problem for adjacent odd cycles, where balanced $C_{2\ell+3}$-blow-up plays the extremal role \cite{Beke}.

However, once $k>\ell+1$, the picture becomes much subtler. In the generalized Tur\'an setting, recent work \cite{Beke} shows that when the longer odd cycle is sufficiently large compared with the shorter one, the balanced $C_{2\ell+3}$-blow-up   is no longer asymptotically optimal. 
In the multicolor Tur\'an setting, Kov\'{a}cs and Nagy \cite{Kovacs} also constructed an unbalanced $C_5$-blow-up, whose $C_{2k+1}$-packing number far exceeds that of the balanced $C_5$-blow up. A similar construction shows that this phenomenon also occurs in the case $k > \ell+1 > 3$.
Determining the correct dominant term of $ex_{C_{2k+1}}(n,C_{2\ell+1})$ for $k>\ell+1$, as well as that of the generalized Tur\'an number $ex(n,C_{2k+1},C_{2\ell+1})$, therefore appears to be a highly challenging problem.

\smallskip

\noindent{\bf 3. Graph pairs attaining the $(6,3)$-type bound.}
Another important threshold is the $(6,3)$-type bound. Our results show that for graph pairs $(F,G)$ with $g_o(F)=g_o(G)=k$ and there exist homomorphisms from both $F$ and $G$ to $C_k$ (ensuring $\chi(F)=\chi(G)\leq 3$), then $ex_F(n,G)$ has order $n^{2-o(1)}$. It would be interesting to determine whether this condition is also necessary for $\chi(G)=3$.
More generally, it remains open whether there exists a graph pair $(F,G)$ with $\chi(G)\ge 4$ such that
$ex_F(n,G)=n^{2-o(1)}$.

\bigskip

\noindent{\bf Acknowledgments:}
Ping Li is supported by the National Natural Science Foundation of China No. 12201375. We are grateful to Suyun Jiang for recommending reference \cite{Beke}, which is helpful for the proof of  Theorem \ref{thm-3+}.

\end{document}